\documentclass[11pt, oneside]{amsart}
 \usepackage[text={5.58in,8.5in},centering,letterpaper,dvips]{geometry}
\usepackage{graphicx}
\usepackage{amsfonts}
\usepackage[dvipsnames]{xcolor}
\usepackage{subcaption}
\usepackage{epsf}
\usepackage{amssymb}
\usepackage{amsmath}
\usepackage{amscd}
\usepackage{pdfpages}
\usepackage{fancyhdr}
\usepackage{setspace}
\usepackage{soul} 
\usepackage[all]{xy}
\usepackage{verbatim}
\usepackage{enumerate}
\usepackage[colorlinks=true, urlcolor=NavyBlue, linkcolor=NavyBlue, citecolor=NavyBlue]{hyperref}

\theoremstyle{theorem}
\newtheorem{theorem}{Theorem}[section]
\newtheorem{proposition}[theorem]{Proposition}
\newtheorem{lemma}[theorem]{Lemma}
\newtheorem{question}[theorem]{Question}

\newtheorem{conjecture}[theorem]{Conjecture}



\makeatletter
\newtheorem*{rep@theorem}{\rep@title}
\newcommand{\newreptheorem}[2]{%
\newenvironment{rep#1}[1]{%
 \def\rep@title{#2 \ref{##1}}%
 \begin{rep@theorem}}%
 {\end{rep@theorem}}}
\makeatother

\newreptheorem{theorem}{Theorem}
\newreptheorem{lemma}{Lemma}
\newreptheorem{question}{Question}
\newreptheorem{corollary}{Corollary}
\newreptheorem{proposition}{Proposition}


\theoremstyle{definition}
\newtheorem{definition}[theorem]{Definition}
\newtheorem{remark}[theorem]{Remark}

\newtheorem{example}[theorem]{Example}

\newcommand{\F}{\mathcal{F}}
\newcommand{\Z}{\mathbb{Z}}

\newcommand{\N}{\mathbb{N}}

\newcommand{\Dd}{\mathcal D}

\newcommand{\Rr}{\mathfrak R}

\newcommand{\A}{\alpha}

\newcommand{\pd}{\partial}

\newcommand{\K}{\mathcal K}
\newcommand{\D}{\mathcal D}

\newcommand{\orange}[1]{\textcolor{orange}{#1}}

\newcommand{\red}{\textcolor{red}}
\newcommand{\blue}{\textcolor{blue}}


\makeatletter
\def\@seccntformat#1{%
  \protect\textup{\protect\@secnumfont
    \ifnum\pdfstrcmp{subsection}{#1}=0 \bfseries\fi
    \csname the#1\endcsname
    \protect\@secnumpunct
  }%
}  
\makeatother


\topmargin = -.25in 
\textwidth = 6in
\textheight = 8.75in
\oddsidemargin = .25in
\evensidemargin = 0in
\begin{document}

\rhead{\thepage}
\lhead{\author}
\thispagestyle{empty}


\raggedbottom
\pagenumbering{arabic}
\setcounter{section}{0}


\title{Ribbon numbers of 12-crossing knots}

\author{Xianhao An}
\address{Department of Mathematics, University of Illinois Urbana-Champaign
Champaign, IL 61820}
\email{xianhao2@illinois.edu}

\author{Matthew Aronin}
\address{School of Mathematics, Georgia Institute of Technology, Atlanta, GA 30332}
\email{maronin@gatech.edu}

\author{David Cates}
\address{Department of Mathematics, Texas A\&M University, College Station, TX 77843}
\email{dkrc@tamu.edu}

\author{Ansel Goh}
\address{Department of Mathematics, The University of Texas at Austin, Austin, TX 78712}
\email{anselgoh@utexas.edu}
\urladdr{https://sites.google.com/view/anselgoh}

\author{Benjamin Kirn}
\address{Department of Physics, University of Maryland, College Park, MD 20742}

\author{Josh Krienke}
\address{Bard College
Annandale-on-Hudson, NY 12504}
\email{joshjoekrienke@gmail.com}
\urladdr{https://www.joshkrienke.com}

\author{Minyi Liang}
\address{Department of Mathematics, Jilin University, Changchun, Jilin, China}

\author{Samuel Lowery}
\address{Department of Mathematics, The Ohio State University, Columbus, OH 43210}
\email{lowery.257@osu.edu}

\author{Ege Malkoc}
\address{Department of Mathematics, Istanbul Bilgi University, Istanbul, T\"urkiye}
\email{malkocege215@gmail.com}

\author{Jeffrey Meier}
\address{Department of Mathematics, Western Washington University, Bellingham, WA 98229}
\email{jeffrey.meier@wwu.edu}
\urladdr{http://jeffreymeier.org} 

\author{Max Natonson}
\address{Department of Mathematics, University of Michigan, Ann Arbor, MI 48104}
\email{natonson@umich.edu}

\author{Veljko Radi\'c}
\address{Faculty of Mathematics, University of Belgrade, Belgrade, Serbia}
\email{vradicc@gmail.com}

\author{Yavuz Rodoplu}
\address{Department of Mathematics, University of Nebraska Lincoln, Lincoln, NE 68508}
\email{yrodoplu2@unl.edu}

\author{Bhaswati Saha}
\address{Department of Mathematics, Presidency University, Kolkata, India.}

\author{Evan Scott}
\address{Department of Mathematics, CUNY Graduate Center New York, NY 10016}
\email{escott@gradcenter.cuny.edu}

\author{Roman Simkins}
\address{Department of Mathematics, Ohio University, Athens, OH 45701}
\email{roman.simkins@gmail.com}
\urladdr{https://romansimkins.com/}

\author{Alexander Zupan}
\address{Department of Mathematics, University of Nebraska--Lincoln, Lincoln, NE 68588}
\email{zupan@unl.edu}
\urladdr{https://math.unl.edu/azupan2}


\begin{abstract}
	The ribbon number of a knot is the minimum number of ribbon singularities among all ribbon disks bounded by that knot.  In this paper, we build on the systematic treatment of this knot invariant initiated in recent work of Friedl, Misev, and Zupan.  We show that the set of Alexander polynomials of knots with ribbon number at most four contains 56 polynomials, and we use this set to compute the ribbon numbers for many 12-crossing knots.
	We also study higher-genus ribbon numbers of knots, presenting some examples that exhibit interesting behavior and establishing that the success of the Alexander polynomial at controlling genus-0 ribbon numbers does not extend to higher genera.
\end{abstract}

\maketitle

\section{Introduction}

The slice-ribbon conjecture of Fox, which asks whether every slice knot is ribbon, is one of the most famous open problems in knot theory.  A knot $K \subset S^3$ is smoothly \emph{slice} if $K$ bounds a smooth, properly embedded disk $\D$ in $B^4$, and $K$ is \emph{ribbon} if $K$ bounds such a disk $\D$ that has no maxima on its interior with respect to the radial Morse function on $B^4$.
In this case, $\D$ can be projected to an immersed disk in $S^3$ with only ribbon singularities; that is, a \emph{ribbon disk} for $K$.
The \emph{ribbon number} $r(K)$ of $K$ minimizes the number $r(\D)$ of ribbon singularities among all ribbon disks $\D$ bounded by $K$.

Recent work of Friedl, Misev, and Zupan shows how Alexander polynomials can be used as effective tool to catalogue the ribbon numbers of low-crossing knots.
They determine the ribbon number for all but three knots with eleven or fewer crossings~\cite{FMZ}.
In this paper, we extend that investigation to knots with 12 crossings.  Let $\Rr_r$ denote the set of all possible Alexander polynomials of ribbon knots $K$ such that $r(K) \leq r$.  In~\cite{FMZ}, the authors proved that for each $r$, the set $\Rr_r$ is finite and computable, and they determined that $\Rr_2$ and $\Rr_3$ have cardinalities three and ten, respectively.  We determine the cardinality of $\Rr_4$.

\begin{theorem}\label{thm:main}
	The set $\Rr_4$ has 56 elements, which are enumerated in Table~\ref{table:R4}.
\end{theorem}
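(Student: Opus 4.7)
The plan is to extend the algorithmic framework developed by Friedl--Misev--Zupan in~\cite{FMZ} from the cases $r \leq 3$ that they resolved to the case $r=4$. Their central insight is that a ribbon disk with $r$ ribbon singularities is encoded by a combinatorial ribbon presentation: an $(r+1)$-component unlink together with $r$ bands, equivalently a group presentation on $r+1$ generators and $r$ ribbon-type relators. From such data the Alexander polynomial of the boundary knot is computed as the determinant of a reduced Fox Jacobian, an $r \times r$ matrix over $\Z[t^{\pm 1}]$. Crucially, once the relators are passed to the abelianization of the kernel, only finitely many inequivalent ribbon presentations contribute distinct polynomials, so $\Rr_r$ is finite and enumerable.

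First I would formalize the search space: parameterize ribbon presentations with $r=4$ by the data needed to determine the Fox Jacobian modulo the equivalences (band slides, band isotopies, handle cancellations, generator relabelings) that preserve the Alexander polynomial of the resulting knot. I would then implement a computer search that iterates over equivalence-class representatives, computes each Alexander polynomial by a symbolic determinant over $\Z[t^{\pm 1}]$, and deduplicates the output. Consistency checks come from the known cases: the algorithm should recover the three polynomials of $\Rr_2$ and the ten polynomials of $\Rr_3$ when restricted to ribbon disks with fewer bands, and it must include every polynomial of every ribbon knot with at most $11$ crossings that~\cite{FMZ} established to have ribbon number at most $4$.

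The main obstacle, and where most of the work lies, is the combinatorial explosion. Going from $r=3$ to $r=4$ multiplies the size of the search space considerably, so one must exploit the equivalences above aggressively enough to keep the enumeration tractable while still provably covering every equivalence class; the bookkeeping required to certify no class is missed is the delicate part. The proof is completed by combining the finiteness theorem of~\cite{FMZ} with the output of this computer calculation. Each of the resulting $56$ polynomials is certified from two sides: the theoretical bound of~\cite{FMZ} guarantees no polynomial is missed, and each polynomial on the list is validated by exhibiting an explicit ribbon disk realizing it — these disks provide exactly the data recorded in Table~\ref{table:R4}.
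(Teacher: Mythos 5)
Your overall strategy---reduce the problem to a finite combinatorial enumeration of ribbon presentations, compute each Alexander polynomial by Fox calculus with a computer, and deduplicate---is the same as the paper's, but your normalization of the search space is genuinely different, and it is exactly there that your write-up has an unjustified step. You assert that every ribbon disk with $r$ ribbon singularities is encoded by an $(r+1)$-component unlink with $r$ bands (equivalently, $r+1$ generators and $r$ relators, so a square reduced Fox Jacobian). That is not how the framework of~\cite{FMZ} is set up: a disk--band presentation has $\F+1$ disks and $\F$ bands where $\F$ is the fusion number, and $\F$ can be as small as $1$ even when the ribbon number is $4$ (the paper's Structure (1) is a single band carrying all four intersections). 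Your claim is in fact true, but it requires the observation that one can split a band between consecutive ribbon intersections by inserting a new unknotted disk, repeatedly, until every band carries exactly one intersection; without that lemma your enumeration is not provably exhaustive, and with it your search space (trees on five vertices, one signed marking per edge) is a valid, purely machine-checkable alternative. The paper normalizes in the opposite direction: it invokes the reducibility result of~\cite{FMZ} to pass to \emph{irreducible} ribbon codes (so every edge carries a marking and fusion number is at most $4$), classifies by hand the eight possible marked-tree structures, introduces leaf isotopies and a decomposability criterion to prune the list to at most $118$ codes, and only then runs Fox calculus in Sage. What the paper's route buys is a small, humanly auditable case list (and the structural information used later, e.g.\ in Remark~\ref{rmk:howie}); what your route would buy is a simpler correctness argument at the cost of a larger brute-force search. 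One further point you handle correctly but should make explicit: the exact count of $56$ requires not only that no polynomial is missed but that each listed polynomial is realized by an actual ribbon disk with at most four ribbon singularities and that the $46$ new ones are distinct from each other and from the ten elements of $\Rr_3$; realizability holds because every abstract ribbon code is achieved by some disk--band presentation in $S^3$.
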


The proof involves encoding the data necessary to compute the Alexander polynomial of $K$ in a marked graph called a \emph{ribbon code} and exhaustively enumerating all ribbon codes up to ribbon number four.
Using Theorem~\ref{thm:main} as an obstruction for a knot to have ribbon number four, we determine the ribbon number for many 12-crossing knots.

\begin{theorem}\label{thm:tab}
	The ribbon numbers for 54 of the 58 prime, non-alternating 12-crossing ribbon knots are given in Table~\ref{table:12n}.
	The ribbon numbers for 20 of the 49 prime, alternating 12-crossing ribbon knots are give in Table~\ref{table:12a}.
\end{theorem}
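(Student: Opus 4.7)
The plan is to work through the list of 12-crossing prime ribbon knots (obtained from a census such as KnotInfo) one knot at a time, squeezing the ribbon number between a lower bound coming from the Alexander polynomial obstruction and an upper bound coming from an explicit ribbon disk. For each such knot $K$, we would first compute $\Delta_K(t)$ and compare it with the three tables $\Rr_2$, $\Rr_3$ (from~\cite{FMZ}) and $\Rr_4$ (from Theorem~\ref{thm:main}). This immediately gives a lower bound $r(K) \geq r+1$ whenever $\Delta_K \notin \Rr_r$. Since no nontrivial ribbon knot admits a ribbon disk with fewer than two singularities, we always have $r(K) \geq 2$ as well.

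For the upper bounds, the plan is to produce, for each knot in the tables, an explicit ribbon disk $\D$ realizing the proposed value of $r(K)$. In practice this means searching through presentations of each knot for band unknotting sequences whose duals give ribbon disks; candidate moves can be found from the symmetric unions and fusion descriptions catalogued in KnotInfo, from small-genus Seifert surfaces on which bands can be placed, or by computer-assisted search over ribbon codes of low complexity (the same machinery that drives the proof of Theorem~\ref{thm:main}). Each such band presentation yields an immediate upper bound $r(K) \leq r(\D)$, and one records the smallest $r(\D)$ found. Whenever the Alexander lower bound from $\Rr_r$ matches this upper bound, the value of $r(K)$ is pinned down exactly and the knot contributes a row to Table~\ref{table:12n} or Table~\ref{table:12a}.

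The bookkeeping for the two tables then proceeds by exhaustion: run the procedure above for every prime ribbon 12-crossing knot, and record those for which the lower and upper bounds coincide. For knots whose Alexander polynomial lies in $\Rr_4$, the obstruction gives only $r(K) \leq 4$ and one must either exhibit a ribbon disk with two, three, or four singularities to conclude, or else fall back to a finer invariant; we expect this to account for the 4 non-alternating and 29 alternating knots left undetermined. In the complementary case $\Delta_K \notin \Rr_4$, Theorem~\ref{thm:main} forces $r(K) \geq 5$, and then a disk with exactly five ribbon singularities suffices; such disks are typically easier to find because the extra bands provide more flexibility.

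The main obstacle is the upper-bound half of the argument. The Alexander obstruction is a one-time computation, but constructing ribbon disks with few singularities is a genuinely creative task: neither the existence nor the minimality of a small-complexity disk is algorithmic, and some of the 12-crossing ribbon knots are known to resist straightforward band presentations. We anticipate that the four undetermined non-alternating knots and the 29 undetermined alternating knots are precisely those for which either no sufficiently small ribbon disk was located, or for which the Alexander polynomial lies in $\Rr_r$ for some $r$ strictly less than the true ribbon number; in either case, closing the remaining gap would require either a new construction or a stronger obstruction than the one provided by Theorem~\ref{thm:main}.
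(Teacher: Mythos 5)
Your overall strategy --- squeezing $r(K)$ between a lower bound from the Alexander-polynomial obstruction and an upper bound from an explicit ribbon disk or symmetric union presentation --- is exactly the skeleton of the paper's Section~\ref{sec:tab}, and your description of the upper-bound side (Lamm's symmetric unions via Lemma~\ref{lem:symm}, plus ad hoc disk constructions) matches what the paper does. However, your lower-bound toolkit is too weak to actually complete the tables, and this is a genuine gap rather than a matter of bookkeeping. You rely only on membership in $\Rr_2,\Rr_3,\Rr_4$ together with the trivial bound $r(K)\geq 2$. But several of the 54 determined non-alternating knots have Alexander polynomial lying in $\Rr_2$ --- including $12n_{313}$ and $12n_{430}$ with $\Delta_K=1$, and $12n_{23}$, $12n_{51}$, $12n_{268}$ with $\Delta_K=2-5t+2t^2$ --- so the polynomial obstruction gives nothing beyond $r(K)\geq 2$, while the tables assert $r(K)=3$ for most of these. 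Your method would leave all of them undetermined.

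The paper closes this gap with two additional lower bounds that your proposal never mentions. First, the genus bound $r(K)\geq g(K)$ (Lemma~\ref{lem:genus}), which is the cited lower bound for a large fraction of the rows (e.g.\ $12n_{56}$, where $\Delta_K\in\Rr_2$ but $g(K)=3$ forces $r(K)\geq 3$). Second, and more subtly, the crosscap-number obstruction of Mizuma--Tsutsumi (Proposition~\ref{prop:cross}): if $r(K)=2$ then $g(K)=1$ or $\gamma(K)\leq 2$, refined in Lemma~\ref{lem:special} for the polynomial $2-5t+2t^2$ to force $g(K)=1$. This is what rules out $r(K)=2$ for the genus-two knots with $\Delta_K\in\Rr_2$ listed above. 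Without these two ingredients the count of determined knots in Theorem~\ref{thm:tab} cannot be reached, so you would need to add them (or an equivalent substitute) before the argument goes through.
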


Our main tool for finding upper bounds on ribbon numbers are symmetric union presentations (Lemma~\ref{lem:symm}) and explicit constructions, either shown in Figures~\ref{fig:ribbonA} and~\ref{fig:ribbonB} or appearing elsewhere in the literature.
The lower bounds rely primarily on the sets $\Rr_2$, $\Rr_3$, and $\Rr_4$ (Proposition~\ref{prop:r2r3} and Theorem~\ref{thm:main}). 

In addition, we explore ribbon numbers with respect to higher genus ribbon surfaces, defining $r_g(K)$ to be the smallest number of ribbon intersections of any genus-$g$ ribbon surface $F$ bounded by $K$. 
With $g(K)$ denoting the Seifert genus of $K$, we note that for any $g \geq g(K)$, we have $r_g(K) = 0$, since $K$ bounds an embedded genus-$g$ surface.
The collection of all genus-$g$ ribbon numbers can be arranged as a tuple
\[ \mathfrak{r}(K) = (r_0(K),r_1(K),\dots,r_{g(K)}(K)),\]
which we refer to as the \emph{ribbon spectrum} of $K$.
(In a similar vein, the \emph{bridge spectrum} of a knot is defined and studied in~\cite{zupan-bridge}.)
The cellar door trick (resolving a single ribbon intersection; see Remark~\ref{rmk:spectrum}) yields the inequality $r_g(K) \leq r_{g-1}(K) - 1$, and we expect that generically, the ribbon spectrum takes a stair-step form:
$$\mathfrak{r}(K) = (r(K),r(K)-1,r(K)-2,\dots,0).$$
Indeed, this can be seen to be the case for any knot $K$ satisfying $r(K) = g(K)$, such as the generalized square knot $T_{p,q} \# \overline T_{p,q}$~\cite[Proposition~2.9]{FMZ}.
We prove there exist knots with multiple ``jumps" in their ribbon spectra:

\begin{theorem}
\label{thm:pretzel}
	For odd $q\in\Z$, the $5$--stranded pretzel knot $K = P(q,3,-3,3,-3)$ satisfies	
	$$\frak r(K) = (4,2,0).$$
\end{theorem}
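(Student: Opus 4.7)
The plan is to verify each of the three entries $r_0(K) = 4$, $r_1(K) = 2$, and $r_2(K) = 0$ in turn. The last entry amounts to computing the Seifert genus: since every parameter of the pretzel $P(q,3,-3,3,-3)$ is odd, Seifert's algorithm on the standard pretzel diagram yields an orientable Seifert surface of genus $(n-1)/2 = 2$, so $g(K) \le 2$, and this is matched from below by computing $\deg\Delta_K(t)$ from the corresponding Seifert matrix, giving $g(K) = 2$ and hence $r_2(K) = 0$.

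For the upper bounds $r_0(K) \le 4$ and $r_1(K) \le 2$, I would construct explicit ribbon surfaces by exploiting the paired $(3,-3)$ structure of the tangles. Each of the two $(3,-3)$ twist pairs admits a natural band move that collapses the pair to a trivial tangle while introducing $2$ ribbon singularities; performing both band moves transforms $K$ into a diagrammatic unknot, producing a ribbon disk with $4$ singularities. Replacing one of these pairs of bands with a single tube attached to the disk (trading a saddle move for a $1$-handle attachment) exchanges two singularities for one additional handle, yielding a genus-$1$ ribbon surface with exactly $2$ singularities.

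For the lower bound $r_0(K) \ge 4$, I would compute $\Delta_K(t)$ explicitly and verify that it is not one of the $10$ polynomials in $\Rr_3$; Theorem~\ref{thm:main} then forces $r_0(K) > 3$. The subtler bound is $r_1(K) \ge 2$: since $g(K) = 2 > 1$ we immediately have $r_1(K) \ge 1$, but ruling out $r_1(K) = 1$ lies outside the reach of Theorem~\ref{thm:main}, which only obstructs genus-$0$ ribbon surfaces. The natural route is through the double branched cover $\Sigma_2(K)$, a Seifert fibered rational homology sphere: a ribbon surface of genus $g$ with $r$ singularities lifts to a $4$-manifold filling $W$ of $\Sigma_2(K)$ with $b_2(W) = 2g + r$, and Heegaard Floer $d$-invariants of $\Sigma_2(K)$ should force $b_2(W) \ge 4$, giving $2g + r \ge 4$ and in particular $r_1(K) \ge 2$.

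The main obstacle is the lower bound $r_1(K) \ge 2$, and more specifically producing a $b_2 \ge 4$ bound on all $4$-manifold fillings of $\Sigma_2(K)$ that is uniform in the odd integer $q$. Identifying $\Sigma_2(P(q,3,-3,3,-3))$ as a concrete plumbed $3$-manifold and carrying out the $d$-invariant computation across its spin$^c$ structures, with control over $q$, is the step most likely to require the most care.
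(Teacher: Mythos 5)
Your treatment of $r_0(K)$ and of the upper bounds matches the paper: the paper computes $\Delta_K(t) = 4-20t+33t^2-20t^3+4t^4$ from an explicit ribbon code, observes it is not in $\Rr_3$ to get $r_0(K)\geq 4$, and exhibits explicit ribbon surfaces (Figure~\ref{fig:5-strand}, Lemma~\ref{lem:pretzel_upper}) realizing $r_0\leq 4$, $r_1\leq 2$, and $g(K)\leq 2$ exactly as you describe. The problem is the step you yourself flag as the obstacle, $r_1(K)\geq 2$, and here your proposed route has a genuine gap rather than just an unfinished computation.

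First, the formula $b_2(W) = 2g+r$ for the double branched cover is not justified and does not hold for the natural construction: a genus-$g$ ribbon surface pushed into $B^4$ becomes an \emph{embedded} genus-$g$ surface, and the double cover of $B^4$ branched over it has $b_2 = 2g$, with no memory of $r$. More fundamentally, no invariant of a 4-manifold filling of $\Sigma_2(K)$ built from the pushed-in surface can bound $r_1$ from below, because $K$ is ribbon and hence already bounds a \emph{disk} in $B^4$; any such obstruction would be vacuous. If instead you first resolve the singularities in $S^3$ (the cellar door trick) you get an embedded surface of genus $g+r$ and hence $b_2 = 2g+2r$, so a $d$-invariant bound $b_2\geq 4$ would only yield $g+r\geq 2$, i.e.\ $r_1\geq 1$, which is not enough. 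The paper's argument (Proposition~\ref{prop:hgrn_bound}(2) and Lemma~\ref{lem:pretzel_lower}) exploits exactly the extra information you are missing: resolving the $r$ ribbon singularities of a non-embedded genus-$g$ ribbon surface so that at least one resolution disrespects orientation produces an embedded \emph{non-orientable} surface in $S^3$ with crosscap number $2g+2r$, so $2g + 2r_g(K)\geq \gamma(K)$ whenever $r_g(K)>0$. Ichihara and Mizushima computed $\gamma(K)=2n+1=5$ for these pretzel knots, and the oddness of $5$ is what upgrades $2+2r_1(K)\geq 5$ to $r_1(K)\geq 2$. Without this (or some substitute input that survives the push into $B^4$), your proof of $r_1(K)\geq 2$ does not close.
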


Finally, a natural question arising from the work in~\cite{FMZ} is whether determinants or Alexander polynomials can be used to obstruct higher genus ribbon numbers.  We demonstrate that this obstruction only succeeds in the case of ribbon disks.
Note that if the degree of $\Delta_K(t)$ is $2d$, then $g(K) \geq d$, and by Proposition~\ref{prop:hgrn_bound} below,
\[ r_1(K) \geq g(K) - 1 \geq d - 1.\]
We prove that this is the \emph{only} restriction $\Delta_K(t)$ imposes on $r_1(K)$ among the collection of all ribbon knots.
\begin{theorem}
\label{thm:torus_alex}
	If $K$ is a ribbon knot with $\text{deg}(\Delta_K(t))=2d$, then there exists a ribbon knot $K'$ such that $\Delta_{K'} (t)=\Delta_K(t)$ and $r_1(K')=d-1$.
	In particular, for any $r$, the set of Alexander polynomials of ribbon knots $J$ with $r_1(J) \leq r$ is infinite, and the set of determinants of such $J$ is unbounded.
\end{theorem}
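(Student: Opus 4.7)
The plan is to establish the lower bound $r_1(K') \geq d-1$ from general principles and then construct a $K'$ realizing the matching upper bound. The lower bound follows immediately from Proposition~\ref{prop:hgrn_bound}: since $\Delta_{K'}(t) = \Delta_K(t)$ has degree $2d$, the Seifert genus satisfies $g(K') \geq d$, hence $r_1(K') \geq g(K') - 1 \geq d - 1$.

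For the upper bound the strategy is to produce a $K'$ whose genus-$0$ ribbon disk has exactly $d$ ribbon singularities and then apply the cellar door trick once. Because $K$ is ribbon, the Fox--Milnor condition gives a factorization $\Delta_K(t) = f(t) f(t^{-1})$ with $f \in \Z[t]$, $\deg f = d$, and $f(1) = \pm 1$. The main step is to build a ribbon knot $K'$ whose Seifert matrix has the block form
\[
S \;=\; \begin{pmatrix} 0 & A \\ B & C \end{pmatrix}
\]
with $A, B$ chosen so that $\det(tA - B^T) = f(t)$, which forces $\Delta_{K'}(t) = f(t) f(t^{-1}) = \Delta_K(t)$ and Seifert genus $g(K') = d$. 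I would realize such a $K'$ explicitly as a chain of $d$ plumbed or symmetrically-summed twisted bands, each contributing exactly one ribbon singularity to an explicit ribbon disk $\D'$; this would give $r(K') \leq d$, and combined with the bound $r(K') \geq g(K') = d$ the disk $\D'$ has precisely $d$ ribbon singularities. Applying the cellar door trick once resolves a single singularity and produces a genus-$1$ ribbon surface for $K'$ with $d - 1$ ribbon singularities, yielding $r_1(K') \leq d-1$ as desired.

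The main obstacle I anticipate is the geometric realization step: one must verify that \emph{every} admissible Fox--Milnor polynomial $f(t) f(t^{-1})$ can actually be presented by a knot in $S^3$ bounding a ribbon disk of the prescribed combinatorial type. Algebraically this amounts to realizing a given block-form Seifert matrix by a knot, which can be carried out by hand for each degree using iterated twist-knot or symmetric-union constructions, but assembling a clean uniform family that works for arbitrary $f$ and whose ribbon disk manifestly has $d$ singularities is the delicate part. A candidate family is the ``chain of twisted bands'' construction, whose Seifert matrix is a direct computation and whose natural ribbon disk realizes the claimed singularity count.

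Finally, the ``in particular'' consequences follow quickly from the main statement: for each fixed $r \geq 0$, set $d = r + 1$ and vary $f$ over integer polynomials of degree $r+1$ with $f(1) = \pm 1$. This produces infinitely many distinct Fox--Milnor polynomials $f(t) f(t^{-1})$, each realized by some ribbon knot $K'$ with $r_1(K') = r$, and the determinants $\det(K') = f(-1)^2$ are unbounded as $f$ varies over such polynomials, proving the infiniteness and unboundedness claims.
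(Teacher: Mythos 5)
Your lower bound is exactly the paper's (Proposition~\ref{prop:hgrn_bound} together with $\deg(\Delta_{K'})\leq 2g(K')$), and your reduction of the ``in particular'' clause to the main statement is fine. The upper bound, however, rests on a step that cannot work. You ask for a ribbon knot $K'$ with $\Delta_{K'}=\Delta_K$ that bounds a ribbon \emph{disk} with exactly $d$ ribbon singularities, and then pass to genus one by a single cellar door move. That forces $r_0(K')\leq d$, hence $\Delta_K\in\Rr_d$, which is a \emph{finite} set for each $d$ (Theorem~\ref{thm:main} is the case $d=4$); equivalently, by Corollary~1.4 of~\cite{FMZ}, $\det(K')\leq(2^d-1)^2$. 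Since $\det(K')=|\Delta_K(-1)|$ is prescribed in advance and can be arbitrarily large for fixed $d$ --- the unboundedness of determinants is precisely what the theorem asserts --- no such $K'$ exists for most inputs $\Delta_K$. (Already for $d=1$ your route demands a nontrivial knot with ribbon number one, which does not exist.) Any argument that manufactures the genus-one surface by resolving a singularity of a minimal ribbon disk ties $r_1$ to $r_0$ and therefore inherits exactly the genus-zero obstructions the theorem is designed to defeat.

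The missing idea is to build the genus-one ribbon surface directly, with a singularity count that is decoupled from that of any associated ribbon disk. The paper takes an unknotted annulus $A$ capped off by disks $D_1$ and $D_2$, and attaches a band that passes through $A$ only $m=d-1$ times but winds through $D_1$ and $D_2$ many additional times. The surface $F=A\cup(\text{band})$ then has only $d-1$ ribbon intersections, while the ribbon disk $D_1\cup(\text{band})\cup D_2$ for the same knot has $m+2\sum|n_i|$ of them --- unboundedly many as the coefficients of $f$ grow, consistent with the genus-zero obstructions. The winding numbers $n_i$ are solved for from the coefficients of the Fox--Milnor factor $f$ (with $f(1)=1$ supplying the one consistency condition), and a Fox calculus computation (Lemma~\ref{lem:alexander}) verifies $\Delta_{K'}(t)=f(t)f(t^{-1})=\Delta_K(t)$. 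Without some construction of this kind, in which the extra ribbon intersections of the disk are hidden from the genus-one surface, the upper bound $r_1(K')\leq d-1$ is not attainable for general $\Delta_K$.
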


The final statement of the theorem contrasts Corollary 1.4 of~\cite{FMZ}, which asserts that if $K$ is a ribbon knot,
\[ \det(K) \leq (2^{r(K)} - 1)^2.\]
\begin{remark}
\label{rem:yasuda}
	We note that, as an invariant, ribbon number is closely related to the \emph{ribbon crossing number} $\text{r-cr}(\K)$ of a ribbon 2--knot $\K$.
	Any ribbon disk $\D$ can be perturbed to get an embedded disk in $B^4$ and then doubled to yield a ribbon 2--knot $\K(\D)$ with $r(\D)$ ribbon crossings.
	In addition, if $\Delta_{\K(\D)} = f(t)$, then $\Delta_{\pd D} = f(t) f(t^{-1})$.
	(This is discussed in detail in Section~3 of~\cite{FMZ}.)
	Thus, the same tools we use here can be used to understand ribbon crossing numbers, and indeed, similar investigations have been carried out in this other setting.
	
	In~\cite{yasuda:2-knot}, Yasuda enumerates all Alexander polynomials of ribbon 2-knots with ribbon crossing number at most four, and in theory, this list should agree with $\Rr_4$.
	However, our set contains all of polynomials in Yasuda's list and uncovers several missed cases: The last column of Table~\ref{table:R4} gives a 2-knot found by Yasuda with the given Alexander half-polynomial, so the six rows with no entry in this column correspond to 2-knots $\K$ with $\text{r-cr}(\K) \leq 4$ not appearing in~\cite{yasuda:2-knot}.
	(Note that we have included only one 2-knot from each mirror pair described by Yasuda for each polynomial.)
	Furthermore, the polynomial $\Delta_{K_{57}^2}(t)$ given in~\cite[Section~5.1]{yasuda:2-knot} contains a sign error on the highest-degree term; the correct polynomial is given in Table~\ref{table:R4}.
	Kanenobu gave a classification of Yasuda's ribbon 2-knots~\cite{kanen}.
\end{remark}

\begin{remark}
\label{rmk:howie}
	A combinatorial object equivalent to a ribbon code was used by Howie to investigate the (still open) question of whether the exterior of a ribbon disk (in $B^4$) is aspherical~\cite{howie_asphericity}.
	Howie called these objects \emph{weakly labeled oriented trees}, and, in the language of ribbon codes, he proved that the corresponding disk exteriors are aspherical provided any two vertices can be connected by a path containing at most three markings.
	This infinite class of ribbon codes includes all ribbon codes with ribbon number at most three and ribbon codes with ribbon number four and Structure (6), (7), or (8) in Figure~\ref{fig:R4_structures}.
\end{remark}

\subsection{Organization}

In Section~\ref{sec:prelim} we recall preliminary material, most of which comes from~\cite{FMZ}; however, we also introduce \emph{decomposability} for ribbon codes and a move called a \emph{leaf isotopy}.
In Section~\ref{sec:R4}, we characterize the possible structures for ribbon codes with ribbon number four, and we use this to give a complete list of such codes, which are presented in Tables~\ref{table:rc1-2},~\ref{table:rc3-5}, and~\ref{table:rc6-8}.
From this list, we deduce the members of $\Rr_4$.
In Section~\ref{sec:tab}, we use the set $\Rr_4$, together with techniques and results from Section~\ref{sec:prelim}, to determine the ribbon numbers for many knots with 12-crossings and to offer upper and lower bounds for others.  The results are presented in Tables~\ref{table:12n} and~\ref{table:12a}.
In Section~\ref{sec:hgrn}, we initiate the study of higher-genus ribbon numbers, paying particular attention to certain pretzel knots and establishing the ineffectiveness of the Alexander polynomial in this setting, and proving Theorems~\ref{thm:pretzel} and~\ref{thm:torus_alex}.

\subsection{Acknowledgments}

This project was completed primarily during the summer of 2023 as part of the Polymath Jr. Virtual REU, and the authors are grateful to the Polymath Jr. organizers for providing the opportunity to carry out this research.  We also thank Stefan Friedl and Filip Misev for helpful conversations and Taizo Kanenobu and Tomoyuki Yasuda for a helpful email correspondence.
JM was supported by NSF grant DMS-2006029; AZ was supported by NSF grants DMS-2005518 and DMS-2405301 and a Simons Fellowship.

\section{Preliminaries}\label{sec:prelim}

We work in the smooth category throughout.
We give most definitions a brief treatment here, and we refer the reader to~\cite{FMZ} for additional details.  As in Section~2 of~\cite{FMZ}, we begin with upper bounds.  Aside from explicit constructions, we get upper bounds from symmetric union presentations.  A \emph{symmetric union presentation} for a knot $K$ is a diagram $D^*$ for $K$ such that
\begin{enumerate}
\item $D^*$ has a vertical axis of symmetry $L$, outside of a neighborhood of which $D^*$ has reflection symmetry;
\item $D^*$ has some number (possibly zero) of crossings contained in $L$; and
\item Exactly two horizontal strands of $D^*$ cross $L$.
\end{enumerate}
Given a symmetric union presentation $D^*$, performing the vertical smoothing of all crossings on the axis of symmetry $L$ converts $D^*$ to $D \# \overline{D}$, where $\overline{D}$ is the mirror image of $D$, and we call the diagram $D$ the \emph{partial knot diagram} associated to $D^*$.  See Figure~\ref{fig:symm-ex} for an example of a symmetric union presentation $D^*$ for the knot $12n_{605}$ with partial diagram $D$, a six-crossing diagram for the trefoil.

\begin{figure}[h!]
    \centering
    \includegraphics[height=4cm]{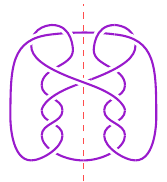} \qquad \qquad
       \includegraphics[height=4cm]{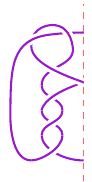} \quad\qquad
          \includegraphics[height=4cm]{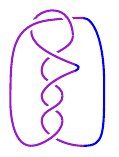} 
    \caption{A symmetric union presentation $D^*$ for $12n_{605}$ (left), and the corresponding partial diagram $D$ (right).}
    \label{fig:symm-ex}
\end{figure}

Every knot that admits a symmetric union presentation is ribbon, and it is an open problem to due Lamm whether every ribbon knot admits a symmetric union presentation.  See~\cite{lamm1,lamm2,lamm} for further details.  We can use symmetric union presentations to bound $r(K)$ from above.

\begin{lemma}\label{lem:symm}\cite[Lemma 2.1]{FMZ}
Suppose that $K$ admits a symmetric union presentation $D^*$ with partial diagram $D$, and the horizontal strands crossing the axis of symmetric are adjacent to $\ell$ consecutive undercrossings in $D$.  Then $r(K) \leq c(D) - \ell$.
\end{lemma}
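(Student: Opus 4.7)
The plan is to produce an explicit ribbon disk $\D$ bounded by $K$ whose number of ribbon singularities satisfies $r(\D)\leq c(D)-\ell$.

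The first step is to recall the standard fold construction of a ribbon disk from a symmetric union. Vertically smoothing the crossings of $D^*$ on $L$ yields a diagram for $D\#\overline{D}$, which bounds an obvious fold ribbon disk $\Delta_0$ in $B^4$ obtained by joining each point of the summand $D$ to its mirror by a short arc through the fourth dimension. This fold disk has exactly one ribbon singularity per crossing of $D$, so $r(\Delta_0)=c(D)$. Attaching one band along the symmetry axis for each crossing of $D^*$ on $L$ undoes the smoothings and recovers $K$ as the boundary; because each such band lies on the axis plane, the band attachments introduce no new ribbon singularities, yielding a ribbon disk $\D_0$ for $K$ with $r(\D_0)=c(D)$.

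The second step uses the $\ell$ consecutive undercrossings to trim off $\ell$ singularities. In $\Delta_0$ the ribbon singularity at a crossing $c$ of $D$ is created when the overstrand of $c$ pierces the fold disk along the understrand of $c$. When the understrand is the horizontal strand at the axis, the pierce point sits in a small collar neighborhood of the axis, and a local isotopy in $B^4$ can push the offending overstrand across the axis onto the mirrored half of $\D_0$; the ribbon singularity then merges into the boundary and disappears. Performing this local isotopy once for each of the $\ell$ consecutive undercrossings removes $\ell$ ribbon singularities, and the resulting ribbon disk $\D$ satisfies $r(\D)=c(D)-\ell$, as required.

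The main obstacle is verifying that the $\ell$ local isotopies can be performed without interfering with one another or introducing new singularities elsewhere. The consecutivity of the undercrossings is essential here: were the $\ell$ crossings interrupted by an overcrossing or by another arc of $D$, the push-off of one overstrand would sweep across another piece of $\D_0$ and create a new ribbon singularity, negating the reduction. I would confirm the non-interference with a careful local picture of a neighborhood of the horizontal strand at the axis, exhibiting $\ell$ pairwise disjoint collars in which the push-offs can be executed simultaneously.
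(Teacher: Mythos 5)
Your two-step outline---first produce the ribbon disk induced by the symmetric union presentation, with one ribbon singularity per crossing of the partial diagram $D$, and then use the $\ell$ consecutive undercrossings adjacent to the horizontal strands to eliminate $\ell$ of those singularities---is the same strategy as in the cited proof of \cite[Lemma 2.1]{FMZ}, and it is exactly what Figure~\ref{fig:symm-ex2} illustrates for $12n_{605}$. However, both steps have genuine gaps as written.

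First, your construction of $\D_0$ does not produce a disk. Attaching a band to the fold disk $\Delta_0$ for $D\#\overline{D}$ lowers the Euler characteristic by one each time, so after attaching one band per axis crossing you obtain a surface with positive genus or with extra boundary components, not a ribbon disk for $K$; and in any case replacing a vertical smoothing on $L$ by the original crossing is a tangle replacement that preserves the number of components, which a single band surgery never does. In the actual construction the twist regions on $L$ are absorbed into an \emph{embedded} portion of the disk near the axis (the fold is performed symmetrically across $L$, carrying the axis tangles along inside the disk), so that the only self-intersections of the immersed disk are the $c(D)$ ribbon arcs coming from the crossings of $D$. Second, your removal mechanism points the wrong way. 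The ``overstrand'' is part of $\partial\D_0=K$ and cannot be moved, pushing the nearby sheet of the disk across the axis would generically drag it through the mirrored half and create new ribbon singularities there, and a ribbon singularity cannot ``merge into the boundary'' without destroying embeddedness of $\partial\D$. What actually happens is that the sub-arc of $D$ starting at a horizontal strand and running through the $\ell$ consecutive undercrossings passes \emph{under} everything it meets, so the sheet of the immersed disk swept out by this arc (and its mirror) can be isotoped in $S^3$ downward and back toward the axis without meeting the rest of the disk; consecutivity is precisely what makes this a descending arc, and an intervening overcrossing would obstruct the retraction. With these corrections the count $r(\D')=c(D)-\ell$ follows, but as written the proposal establishes neither the existence of $\D_0$ nor the removal of the $\ell$ singularities.
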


To see Lemma~\ref{lem:symm} in action, consider the symmetric union presentation for the knot $12n_{605}$ shown in Figure~\ref{fig:symm-ex}.  For this example, the partial diagram $D$ is a 6-crossing diagram for the trefoil, and there are two undercrossings adjacent to the horizontal strands.  Thus, by Lemma~\ref{lem:symm}, we have $r(12n_{605}) \leq 4$.  At left in Figure~\ref{fig:symm-ex2}, we see a ribbon disk $\D$ for $12n_{605}$ induced by the presentation $D^*$ with $c(D)$ crossings, and at right we see a ribbon disk $\D'$ for the same knot with $\ell(D)$ fewer crossings.

\begin{figure}[h!]
    \centering
    \includegraphics[width=0.2\textwidth]{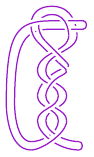} \qquad \qquad
       \includegraphics[width=0.2\textwidth]{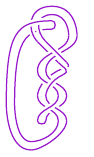}
        \caption{At left, a ribbon disk $\D$ induced by the symmetric union presentation $D^*$ for $12n_{605}$ shown in Figure~\ref{fig:symm-ex}, where $r(\D) = c(D)$.  At right, a simplified ribbon disk $\D'$, where $r(\D') = c(D) - \ell(D)$.}
    \label{fig:symm-ex2}
\end{figure}

\begin{remark}
Technically, Lemma 2.1 in~\cite{FMZ} is stated for consecutive undercrossings adjacent to \emph{one} horizontal strand, but the same proof works when considering both horizontal strands, as demonstrated by the example in Figure~\ref{fig:symm-ex2}.
Additionally, Lemma~\ref{lem:symm} also applies if we consider $\ell$ consecutive \emph{overcrossings} adjacent to both horizontal strands, since we can revolve any symmetric union presentation about its axis of symmetry to change these overcrossings to undercrossings.
\end{remark}

Turning now to lower bounds on ribbon numbers, our most basic tool relates ribbon number to knot genus.

\begin{lemma}\label{lem:genus}\cite[Lemma 2.3]{FMZ}
For any ribbon knot $K$, we have $r(K) \geq g(K)$.
\end{lemma}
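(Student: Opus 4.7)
The plan is to produce, from any ribbon disk $\D$ for $K$ with $r(\D)=r$, a Seifert surface $F \subset S^3$ for $K$ of genus exactly $r$. This gives $g(K) \leq r(\D)$, and minimizing over ribbon disks yields $g(K) \leq r(K)$.

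First, I would push $\D \subset B^4$ down to an immersed disk $\D'$ in $S^3$ with $\pd \D' = K$ and exactly $r$ ribbon singularities, using the radial Morse function on $B^4$ (possible because $\D$ has no interior maxima). Next, I would resolve each ribbon singularity by a local tube surgery inside $S^3$: within a small ball $B$ around each singular arc $\gamma$, the immersed surface consists of two transverse embedded disks meeting along $\gamma$, and I replace these two disks by an embedded annulus in $B$ whose boundary matches theirs, effectively tubing around the singularity. Performing this surgery simultaneously at all $r$ singularities (using disjoint balls, which is possible since the singularities are discrete) produces an embedded, connected, orientable surface $F \subset S^3$ with $\pd F = K$. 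The genus of $F$ is then computed via Euler characteristic: each tube surgery swaps two disks ($\chi = 2$) for an annulus ($\chi = 0$), decreasing $\chi$ by $2$, so starting from the abstract disk ($\chi = 1$) and performing $r$ surgeries gives $\chi(F) = 1 - 2r$; since $F$ is a connected orientable surface with one boundary component, the formula $\chi = 1-2g$ gives $g(F) = r$.

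The main obstacle I anticipate is justifying that the local tube surgeries really do assemble into a globally embedded surface with the claimed Euler characteristic. Embeddedness follows by taking each ball $B$ small enough to be disjoint from the other singularities and to meet $\D'$ only in the two local sheets being replaced; the replacement annulus lies inside $B$ and is automatically disjoint from the rest of the surface. Connectedness is preserved since tube surgery on a connected surface never disconnects it, and orientability is automatic for any embedded surface in $S^3$. One can cross-check the count by running the same computation from the ``fusion'' presentation of $\D$ as a collection of disks in $S^3$ joined by bands with $r$ total band--disk intersections: starting with disks (contributing $+1$ to $\chi$ each), attaching bands as $1$-handles (each contributing $-1$), and then tubing at each intersection (each contributing $-2$) again yields $\chi(F) = 1 - 2r$.
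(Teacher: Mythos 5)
Your overall strategy is exactly the one this paper uses elsewhere (and the one behind \cite[Lemma~2.3]{FMZ}): resolve each of the $r(\D)$ ribbon singularities of a ribbon disk by a local modification that drops the Euler characteristic by $2$ --- this is the ``cellar door trick'' of Remark~\ref{rmk:spectrum} and Figure~\ref{fig:cellar_door} --- obtaining an embedded surface with $\chi = 1-2r$ and hence genus $r$, so $g(K)\leq r(K)$. Your Euler characteristic bookkeeping, the disjointness of the local balls, and the connectedness argument are all fine.

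There is, however, one genuine error: the claim that ``orientability is automatic for any embedded surface in $S^3$.'' That is true for \emph{closed} surfaces, but false for surfaces with boundary --- M\"obius bands embed in $S^3$, and indeed this paper leans on exactly that phenomenon: the caption of Figure~\ref{fig:cellar_door} states that one of the two versions of the local resolution yields an orientable surface and the other yields a non-orientable one, and Proposition~\ref{prop:cross} exploits the non-orientable outcome to bound the crosscap number. Since the Seifert genus is defined using orientable surfaces, your argument as written could just as well terminate in a non-orientable surface with $\chi = 1-2r$, which would only give $\gamma(K)\leq 2r(K)$, not $g(K)\leq r(K)$. The fix is short but must be said: the immersed disk carries an orientation, each local sheet inherits one, and at each singularity you must choose the resolution (the replacement annulus) compatible with these orientations; with that choice the resulting embedded surface is orientable and the genus count goes through. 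A second, minor imprecision: the two local sheets at a ribbon singularity are not symmetric --- one is a subdisk of a disk of the disk--band presentation containing the singular arc in its interior, while the other is a piece of a band with part of its boundary on $K$ --- so the replacement annulus must agree with the old surface along $K$ as well as along $\partial B$; your phrasing ``whose boundary matches theirs'' covers this, but it is worth making explicit that the arcs of $K$ inside $B$ are left untouched.
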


Recall that $\Rr_r = \{\Delta_K(t) : r(K) \leq r\}$.  By construction, $\Rr_r \subset \Rr_{r+1}$.  It is well-known that no non-trivial knot satisfies $r(K) = 1$; moreover, $\Rr_2$ and $\Rr_3$ have been enumerated.

\begin{proposition}\label{prop:r2r3}\cite[Propositions 5.11 and 5.12]{FMZ}
	The set $\Rr_2$ contains two elements, and the set $\Rr_3$ contains ten elements, as shown in Table~\ref{table:R3}.
\end{proposition}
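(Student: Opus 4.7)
The plan is to reproduce the enumerations of \cite[Propositions~5.11 and 5.12]{FMZ} using the ribbon-code formalism set up in this section. Recall that any ribbon disk $\D$ with $r(\D)=r$ yields a ribbon code: a tree with $r+1$ vertices (one per arc of $\D$) and $r$ oriented, marked edges (one per ribbon singularity), from which $\Delta_{\pd\D}(t)$ can be read off as the determinant of an explicit $r\times r$ matrix. Conversely, every ribbon code realizes such a ribbon disk. So the task reduces to enumerating all ribbon codes with at most $r$ edges, modulo the equivalence moves (marking conjugation and leaf isotopy) that preserve $\pd\D$, and then tabulating the resulting distinct Alexander polynomials.

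The first step is to classify the underlying tree shapes. For $r\leq 2$, the only nontrivial possibility is the path $P_3$, since $r(K)=1$ forces $K$ to be unknotted (so the one-edge case contributes only $\Delta(t)=1$). For $r=3$, the two unlabeled trees on four vertices are the path $P_4$ and the star $K_{1,3}$; there are no other possibilities, which keeps the enumeration sharply constrained.

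The second step is to enumerate markings on each tree shape. For each edge I would choose an orientation together with a ``through-arc'' vertex subject to the ribbon-code axioms, then quotient by leaf isotopy and by the combinatorial automorphisms of the underlying tree. Because the trees are small, this is a short finite check that can be carried out by hand or with a trivial computer search, and the resulting list of inequivalent codes is itself small.

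Finally, for each inequivalent code I would apply the Alexander-matrix formula to compute $\Delta(t)$, then collect the distinct polynomials obtained and compare against Table~\ref{table:R3}. The main obstacle is bookkeeping: ensuring that every marking configuration is considered exactly once so that no polynomial is either double-counted or overlooked, and confirming polynomial equality (after normalization by units $\pm t^k$) across superficially different codes. This same combinatorial engine, scaled up to the considerably larger $r=4$ case, is what drives the proof of Theorem~\ref{thm:main} in Section~\ref{sec:R4}, so the $r\leq 3$ enumeration also serves as a template and sanity check for the main result.
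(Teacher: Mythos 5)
Your overall strategy --- enumerate ribbon codes up to the stated equivalences, then compute Alexander polynomials via Fox calculus --- is exactly the engine behind \cite[Propositions~5.11 and 5.12]{FMZ} and behind this paper's determination of $\Rr_4$ in Section~\ref{sec:R4}. But your combinatorial model of a ribbon code is wrong in a way that makes the enumeration incomplete. In the formalism used here, the vertices of the code correspond to the \emph{disks} of a disk-band presentation, the edges correspond to the \emph{bands}, and the ribbon singularities correspond to \emph{markings on} the edges; a single edge may carry several markings. The number of edges is the fusion number $\F(\Gamma)$, which for an irreducible code satisfies $\F(\Gamma)\leq r(\Gamma)$ but is typically strictly smaller. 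Your assertion that a code with $r$ ribbon singularities is ``a tree with $r+1$ vertices and $r$ marked edges (one per ribbon singularity)'' only captures the extreme case $\F(\Gamma)=r(\Gamma)$.

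Concretely, for $r=3$ the irreducible structures are: one edge carrying three markings (two vertices); two edges carrying markings distributed $2+1$ (three vertices); and three edges with one marking each ($P_4$ or $K_{1,3}$, four vertices). Your claim that ``the two unlabeled trees on four vertices are the path $P_4$ and the star $K_{1,3}$; there are no other possibilities'' silently discards the first two families --- which is where most of the ten polynomials in Table~\ref{table:R3} actually come from (compare the remark after Lemma~\ref{lem:leaf}, where the codes $([1,2,-3,2],[2,1,3])$ etc.\ have two edges with markings $2+1$). The same error already bites at $r=2$: the single-edge code $([1,2,1,2])$ used in the proof of Lemma~\ref{lem:special} produces $2-5t+2t^2$, one of the elements of $\Rr_2$, and it does not appear in your scheme. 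To repair the argument you must first classify the possible \emph{structures} (tree shape together with the distribution of markings among edges), as is done in Lemma~\ref{lem:structures} and Figure~\ref{fig:R4_structures} for $r=4$, and only then enumerate labels and signs on each structure.
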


\begin{table}[!ht]
    \centering
    \begin{tabular}{|l|l|l|l|}
    \hline
        Det & Alexander Polynomial & Alexander Half-Polynomial & Note \\ \hline
        1 & $1$ & $1$ & $\mathfrak R_0$ \\ \hline
        1 & $1 - t - t^2 + 3t^3 - t^4 - t^5 + t^6$ & $1 - t^2 + t^3$ &  \\ \hline
        1 & $1 - 3t^2 + t^4$ & $1 + t - t^2$ &  \\ \hline
        9 & $1 - 2t + 3t^2 - 2t^3 + t^4$ & $1 - t + t^2$ & $\mathfrak R_2$ \\ \hline
        9 & $2 - 5t + 2t^2$ & $-1 + 2t$ & $\mathfrak R_2$ \\ \hline
        25 & $2 - 6t + 9t^2 - 6t^3 + 2t^4$ & $2 - 2t + t^2$ &  \\ \hline
        25 & $1 - 3t + 5t^2 - 7t^3 + 5t^4 - 3t^5 + t^6$ & $1 - t + 2t^2 - t^3$ &  \\ \hline
        25 & $1 - 6t + 11t^2 - 6t^3 + t^4$ & $-1 + 3t - t^2$ &  \\ \hline
        49 & $3 - 12t + 19t^2 - 12t^3 + 3t^4$ & $3 - 3t + t^2$ &  \\ \hline
        49 & $1 - 5t + 11t^2 - 15t^3 + 11t^4 - 5t^5 + t^6$ & $1 - 2t + 3t^2 - t^3$ &  \\ \hline
    \end{tabular}
	\caption{The 10 members of $\mathfrak R_3$, ordered by determinant}
	\label{table:R3}
\end{table}

The Alexander polynomial $\Delta_K(t)$ is determined up to multiplication by a unit in $\Z[t,t^{-1}]$, and if $K$ is a ribbon knot, there is a representative $\Delta_K(t)$ that factors as $f(t) \cdot f(t^{-1})$ for some $f(t) \in \Z[t]$.  In this case, we choose such an $f(t)$ and call it the \emph{Alexander half-polynomial} of $K$.
Alexander half-polynomials are determined up to substitution of $t^{-1}$ for $t$ and/or multiplication by a unit.

Following~\cite{FMZ}, we use a tool called a \emph{ribbon code} to organize the information in a ribbon disk.  Before defining ribbon codes, we first define disk-band presentations.
Let $\D$ be a ribbon disk.
A \emph{disk-band presentation} $(D,B)$ for $\D$ consists of a collection $D$ of embedded disks in $S^3$ along with a collection $B$ of embedded bands attached to $\pd D$ such that $\D = D \cup B$.
Note that the ribbon self-intersections of $\D$ occur precisely where the bands $B$ intersect the interiors of the disks $D$.
Every ribbon disk can be represented by a disk-band presentation.  See~\cite{FMZ} for additional details.

Now, suppose that $(D,B)$ is a disk-band presentation for $\D$ equipped with a normal orientation.  We associate a marked graph $\Gamma$ to $(D,B)$ in the following way:  Each disk $D_i \subset D$ gives rise to a vertex $v_i$, and each band $B_k$ attached to disks $D_i$ and $D_j$ gives rise to an edge $e_k$ connecting the corresponding vertices $v_i$ and $v_j$.  For each arc $\A$ of a ribbon intersection of the band $B_k$ with a disk $D_{\ell}$, the arc $\A$ cuts the preimage of $\D$ into two components, only one of which contains $D_{\ell}$, and we define the \emph{local direction} at the ribbon intersection to point along $B_k$ in the direction of the component containing $D_{\ell}$.
Finally, corresponding to each such ribbon intersection, we add a marking to the edge $e_{k}$ labeled $+\ell$ (respectively, $-\ell)$ if the local direction and normal direction of $D_{\ell}$ agree (respectively, disagree) at $\A$.  An example disk-band presentation and corresponding ribbon code are shown in Example~\ref{ex:code}.
We can also express a ribbon code as a tuple of vectors of integers:  If $e_k$ connects $v_{i}$ to $v_{j}$ and has markings $\pm \ell_1,\dots, \pm \ell_n$, we express $e_k$ as the vector $[i,\pm \ell_1 ,\dots,\pm \ell_n,j]$.  The tuple is then a list of all edge vectors.
See Example~\ref{ex:code}.
Howie gave a similar combinatorial treatment of ribbon disks~\cite{howie_asphericity}; see Remark~\ref{rmk:howie}.

\begin{example}\label{ex:code}
In Figure~\ref{fig:code}, we see a disk-band presentation for a ribbon disk $\D$ whose boundary is the knot $12n_{288}$.  This presentation consists of three disks, two bands, and four ribbon intersections, and gives rise to the ribbon code shown. The edge $e_1$ has vector $[1,-3,2,-3,2]$, while the edge $e_2$ has vector $[2,1,3]$, so that the tuple corresponding to this ribbon code is $([1,-3,2,-3,2],[2,1,3])$.
\begin{figure}[h!]
    \centering
    \includegraphics[width=0.6\textwidth]{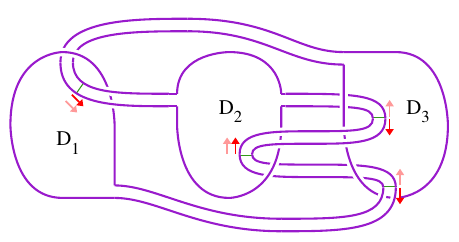} \\
       \includegraphics[width=0.5\textwidth]{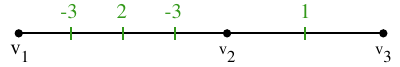}
        \caption{A ribbon disk and corresponding ribbon code.  Local directions are shown in pink and the normal orientation for $\D$ comes out of the page, shown in red.}
    \label{fig:code}
\end{figure}
\end{example}

Reindexing the disks in a disk-band presentation gives rise to an isomorphism of ribbon codes:  Two ribbon codes $\Gamma$ and $\Gamma'$ are \emph{isomorphic} if the underlying graphs are isomorphic via a graph isomorphism $\varphi\colon \Gamma \rightarrow \Gamma'$ that induces a bijection between the markings of $\Gamma$ and $\Gamma'$ such that, whenever $\varphi(v_i) = v'_{\sigma(i)}$ and a marking $\mu_{\ell}$ in $\Gamma$ is labeled $\pm i$, the marking $\varphi(\mu_{\ell})$ in $\Gamma'$ is labeled $\pm \sigma(i)$.
In~\cite{FMZ}, the authors showed

\begin{proposition}\label{prop:alex}\cite[Corollary 4.5]{FMZ}
Suppose $K$ and $K'$ bound ribbon disks $\D$ and $\D'$ with disk-band presentations yielding isomoprhic ribbon codes.  Then $\Delta_K(t) = \Delta_{K'}(t)$.
\end{proposition}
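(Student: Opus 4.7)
The plan is to read off from a ribbon code $\Gamma$ a presentation of the fundamental group of the ribbon disk exterior $\pi_1(B^4 \setminus \D)$ that depends only on the isomorphism class of $\Gamma$, and then apply Fox calculus together with the ribbon factorization of the Alexander polynomial.

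First, I would associate to each vertex $v_i$ of $\Gamma$ a meridional generator $x_i$ of $\pi_1(B^4 \setminus \D)$, and to each edge $e_k = [i,\epsilon_1\ell_1,\dots,\epsilon_n\ell_n,j]$ the Wirtinger-style relator
\[ r_k \;=\; x_j \cdot \left(x_{\ell_n}^{\epsilon_n} \cdots x_{\ell_1}^{\epsilon_1}\right) x_i^{-1} \left(x_{\ell_1}^{-\epsilon_1} \cdots x_{\ell_n}^{-\epsilon_n}\right), \]
obtained by tracking how the meridian $x_i$ is conjugated by $x_{\ell}^{\pm 1}$ each time the band $B_k$ punctures a disk $D_{\ell}$, with sign determined by whether the local direction and normal orientation of $D_{\ell}$ agree. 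A van Kampen argument, applied to a decomposition of $B^4 \setminus \D$ into regular neighborhoods of the disks and bands, then yields
\[ \pi_1(B^4 \setminus \D) \;=\; \langle x_1,\dots,x_n \mid r_1,\dots,r_m \rangle. \]

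Next, an isomorphism $\varphi\colon \Gamma \to \Gamma'$ sending $v_i$ to $v'_{\sigma(i)}$ transforms this presentation into the one produced by the same recipe from $\Gamma'$ via the substitution $x_i \mapsto x'_{\sigma(i)}$, with relators permuted accordingly, because the formula for $r_k$ depends only on vertex labels and signed markings. Applying Fox derivatives and abelianizing by $x_i \mapsto t$ then produces Alexander matrices differing only by permutations of rows and columns, so $\Delta_{\D}(t) \doteq \Delta_{\D'}(t)$. The standard factorization $\Delta_K(t) \doteq \Delta_{\D}(t)\Delta_{\D}(t^{-1})$ for a ribbon knot $K = \pd\D$, as discussed in Section~3 of \cite{FMZ} and recalled in Remark~\ref{rem:yasuda}, then gives $\Delta_K(t) = \Delta_{K'}(t)$.

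The main obstacle will be the first step: verifying that the geometric relators really depend only on $\Gamma$, which means checking that the $\pm$ sign convention tying local direction to the normal orientation of $D_\ell$ corresponds precisely to the exponent $\epsilon_k$ in the conjugating word. Once this bookkeeping is in place, the isomorphism-invariance of the presentation is immediate from the combinatorial formula for $r_k$, and the Fox-calculus step reduces to standard permutation-invariance of elementary ideals.
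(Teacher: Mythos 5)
Your proposal is correct and follows essentially the same route as the paper: the statement is quoted from \cite[Corollary~4.5]{FMZ} without a reprinted proof, but the machinery recalled in Section~\ref{subsec:groups} — a presentation $\langle x_1,\dots,x_n \mid r_j = x_{i_1}w_j x_{i_2}^{-1}w_j^{-1}\rangle$ of $\pi_1(B^4\setminus\D)$ read off from the ribbon code, Fox calculus to get the Alexander matrix, and the factorization $\Delta_{\pd\D}(t)=\Delta_\D(t)\Delta_\D(t^{-1})$ — is exactly your argument. Your sign bookkeeping for the conjugating word matches the convention $\epsilon_k = \pm\,\text{sign}(\ell_k)$ spelled out there, so no gap remains.
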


As a result of this proposition, we can unambiguously define $\Delta_{\Gamma}(t) = \Delta_K(t)$, where $K$ is any knot bounding a disk $\D$ with disk-band presentation $(D,B)$ corresponding to $\Gamma$.  The \emph{ribbon number} $r(\Gamma)$ of a ribbon code $\Gamma$ is the number of markings contained in the edge set of $\Gamma$, and by construction, if  $(D,B)$ is a disk-band presentation for $\D$ with associated ribbon code $\Gamma$, we have $r(\D) = r(\Gamma)$.  Another useful measure of complexity is the \emph{fusion number} $\F(\Gamma)$, which is defined to be the number of edges in $\Gamma$ (and which agrees with the number $\F(D,B)$ of bands in $(D,B)$).  In~\cite{FMZ}, the authors deemed a ribbon code $\Gamma$ \emph{reducible} if one of the following occurs
\begin{enumerate}
\item An edge contains no marking,
\item An edge contains consecutive markings labeled $i$ and $-i$,
\item A marking nearest to a vertex $v_i$ is labeled $\pm i$, or
\item A vertex of valence one or two does not appear as the label on any marking.
\end{enumerate}
If a ribbon code is not reducible, it is \emph{irreducible}.  In~\cite{FMZ}, the authors proved

\begin{proposition}\label{prop:reduc}\cite[Proposition 5.9]{FMZ}
If a ribbon code $\Gamma$ is reducible, then there exists an irreducible ribbon code $\Gamma'$ such that $\Delta_{\Gamma'}(t) = \Delta_{\Gamma}(t)$ and $r(\Gamma') \leq r(\Gamma)$.
\end{proposition}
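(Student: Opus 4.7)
For each of the four reducibility types, the plan is to exhibit a local move on the ribbon code that removes one witness to reducibility, preserves the Alexander polynomial, and does not increase the ribbon number.  Iterating until no move applies yields an irreducible $\Gamma'$, and termination follows from a lexicographic induction on the triple (number of vertices, number of edges, number of markings), since at least one of these counts strictly decreases with every move.

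For types (1) and (4) the moves are purely combinatorial reparametrizations of a disk-band presentation, not isotopies of $\D$, so the boundary knot $K = \pd\D$ does not change.  A marking-free edge from $v_i$ to $v_j$ corresponds to a band $B_k$ with no ribbon intersections, so I absorb $B_k$ and $D_j$ into $D_i$: at the code level this deletes $e_k$, identifies $v_j$ with $v_i$, and relabels every $\pm j$ as $\pm i$.  A valence-one vertex $v_j$ whose index appears in no marking is handled identically via its unique incident edge.  A valence-two vertex $v_j$ whose index appears in no marking allows me to merge the two incident bands into a single band, concatenating their marking sequences in the natural order.  In each case the underlying ribbon disk $\D$ is unchanged and no new markings are introduced.

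For types (2) and (3) the moves are genuine ambient isotopies of $\D$ in $S^3$, so $K = \pd\D$ and hence $\Delta_K(t)$ are preserved.  In type (2), consecutive markings $+i,-i$ (or $-i,+i$) on an edge $e_k$ say that $B_k$ enters and immediately exits $D_i$ with opposite local directions, so the sub-arc of $B_k$ between the two intersections lies on a single side of $D_i$; I push this sub-arc across $\pd D_i$ and off of $D_i$ entirely, killing both ribbon singularities and dropping the ribbon number by two.  In type (3), a marking $\pm i$ adjacent to $v_i$ corresponds to $B_k$ emanating from $\pd D_i$ and immediately crossing $D_i$; I slide the attaching foot of $B_k$ across $\pd D_i$ past the intersection, dropping the ribbon number by one.

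The main obstacle is verifying that the isotopies in types (2) and (3) can be localized, meaning they introduce no new ribbon intersections with other disks or bands.  This requires a careful local picture near the offending sub-arc of $B_k$: the sign data recorded by the markings is used to confirm that the cancelling sub-arc truly lies on a single side of $D_i$, which is precisely where the hypothesis in type (2) that the two markings carry \emph{opposite} signs is used (equal signs would force the sub-arc to link $D_i$ essentially, blocking the push-off).  Once this local-model lemma is in hand, the global iteration is mechanical.
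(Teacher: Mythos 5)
There are two genuine gaps. The first is in your treatment of reducibility type (4) for a valence-one vertex. Condition (4) says only that the index $j$ of the leaf $v_j$ appears as the label of no marking; it does not say that the unique incident edge $e_k$ is unmarked, so the band $B_k$ may pass through many other disks. You cannot ``absorb $B_k$ and $D_j$ into $D_i$,'' delete $e_k$, and simultaneously claim that ``the underlying ribbon disk $\D$ is unchanged'': if $\D$ is unchanged, the ribbon intersections along $B_k$ persist, but after the merge they are intersections of the enlarged disk with the interiors of other disks, which is not a disk-band presentation, and the markings of $e_k$ have nowhere to go. The correct reduction deletes $v_j$, $e_k$, \emph{and every marking on $e_k$}; this changes $\D$ (and in general its boundary knot), and is justified not by an isotopy but by the fact that the generator $x_j$ occurs in no other relator, so eliminating $x_j$ together with the relation $r_k$ is a Tietze move that leaves the group presentation, hence the Alexander ideal, unchanged.

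The second gap is your reliance, for types (2) and (3), on an ambient isotopy of $\D$ preserving $K=\pd\D$, with the key point (that the push-off can be localized) deferred to an unproved ``local-model lemma.'' That lemma is not merely missing; it is false in general. Condition (2) sees only the marking labels, so the sub-band of $B_k$ between the two cancelling intersections may be knotted, or clasped with $\pd D_\ell$ or with the boundary of another band; no isotopy rel the rest of $K$ can then remove the pair, and the reduced code is realized only by a ribbon disk for a \emph{different} knot. The proposition survives because it asks only for equality of Alexander polynomials, not of boundary knots: by Proposition~\ref{prop:alex} the polynomial depends only on the code, and at the level of the associated presentation (see Section~\ref{subsec:groups}) move (2) is the free cancellation of $x_i^{\epsilon}x_i^{-\epsilon}$ inside the conjugating word $w$, while move (3) deletes a letter that commutes past the adjacent conjugated generator. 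This algebraic route is the one taken in \cite{FMZ} (the present paper only cites Proposition~5.9 there and gives no proof). Your moves for type (1) and for type (4) at a valence-two vertex are correct, and your lexicographic termination argument is fine, but types (2), (3), and (4)-valence-one require the algebraic argument rather than an isotopy.
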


The crucial take-away here is that by Proposition~\ref{prop:reduc}, in order the determine the set $\Rr_4$ of all possible Alexander polynomials of knots with ribbon number at most four, we need only enumerate all irreducible ribbon codes $\Gamma$ such that $r(\Gamma) \leq 4$, a much more manageable task, and one we execute by exhaustion in the next section.  Another tool that helps us eliminate cases in a brute force argument is

\begin{lemma}\label{lem:negate}\cite[Lemma 5.8]{FMZ}
Suppose that $\Gamma$ and $\Gamma'$ have isomorphic underlying graphs and isomorphic markings, but such that the label of each marking of $\Gamma'$ is opposite that of $\Gamma$.  Then $\Delta_{\Gamma} = \Delta_{\Gamma'}$.
\end{lemma}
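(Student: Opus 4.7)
The plan is to give a geometric argument identifying the sign-flipping operation with a benign change of auxiliary data, and then to invoke Proposition~\ref{prop:alex}. Recall that the definition of a ribbon code assigns the label $+\ell$ or $-\ell$ to a marking on edge $e_k$ according to whether the local direction along the band $B_k$ agrees or disagrees with the normal orientation of the disk $D_\ell$ at the corresponding ribbon intersection. The local direction depends only on the disk-band pair $(D,B)$, while the sign is determined entirely by the chosen normal orientation of $D_\ell$. Consequently, reversing the normal orientation of $D_\ell$ flips the signs of precisely those markings labeled $\pm \ell$ and leaves all other markings unchanged.

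First I would fix a disk-band presentation $(D,B)$ for a ribbon disk $\D$ realizing $\Gamma$, then form a new presentation on the same underlying data $(D,B)$, but equipped with the normal orientations obtained by simultaneously reversing the normal orientation of every disk $D_i$. By the observation above, the ribbon code associated to this new presentation has the sign of every marking flipped, and so is isomorphic to $\Gamma'$. Since $\D$ and $K = \pd \D$ are unchanged, Proposition~\ref{prop:alex} gives
\[
\Delta_{\Gamma'}(t) = \Delta_K(t) = \Delta_\Gamma(t).
\]

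The only real thing to verify is that the simultaneous reversal of every normal orientation is a globally consistent operation yielding a valid ribbon code with all labels negated; this is essentially automatic, since the normal orientations on the disks $D_i$ are chosen independently and every simultaneous reversal is admissible. If a purely combinatorial derivation is preferred, one can argue instead from the matrix formula for $\Delta_\Gamma(t)$ developed in~\cite{FMZ}: flipping every sign has the effect of substituting $t \mapsto t^{-1}$ throughout the presentation matrix, and since the Alexander polynomial satisfies $\Delta_K(t) = \Delta_K(t^{-1})$ up to units, the same conclusion follows.
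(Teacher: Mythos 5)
Your geometric argument is correct, and it is the natural proof of this statement; the paper itself offers no proof, simply citing~\cite[Lemma~5.8]{FMZ}. The key observation — that the local directions are determined by the topology of the immersion alone, while the sign of each marking labeled $\pm\ell$ is read off by comparing the local direction against the normal orientation of $D_\ell$ — is exactly right, so reversing the normal orientation negates every label while leaving $\D$ and $K = \pd\D$ untouched, and Proposition~\ref{prop:alex} finishes the argument. Two small remarks. First, in the paper's setup the normal orientation is a single global co-orientation of $\D$ (see the caption of Figure~\ref{fig:code}), not an independent choice on each $D_i$; this only strengthens your case, since the simultaneous reversal you perform is precisely the reversal of that one global choice, so there is nothing to check about ``global consistency.'' Second, the algebraic alternative you sketch is not quite literal: negating all labels inverts each letter of the word $w_j$ without reversing its order, and the resulting Fox-derivative entries agree with the $t \mapsto t^{-1}$ substitution only up to multiplication by units, not entry-by-entry. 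The conclusion still follows (the elementary ideal is carried to its image under $t\mapsto t^{-1}$ up to units, and $\Delta_K(t) \doteq \Delta_K(t^{-1})$), but as written that paragraph is a sketch rather than a proof; since your primary geometric argument is complete, this does not affect the validity of the proposal.
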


Sometimes an irreducible ribbon code can be broken up into simpler ribbon codes.
If $(D',B')$ and $(D'',B'')$ are two disk-band presentations, we can form a new disk-band presentation by taking the boundary connected sum along arcs $d'\subset \pd D'$ and $d''\subset \pd D''$.
We refer to any ribbon disk that has a disk-band presentation formed in this way (and to its corresponding ribbon code) as \emph{decomposable}.
A decomposable ribbon code can be characterized by the property that there is a vertex $v_i$ with degree at least two such that each marking $\mu_\ell$ is on the same side of $v_i$ as $v_{|\mu_\ell|}$ (for $\ell\not=i$).  See Figure~\ref{fig:decomp} for an example of a decomposable ribbon disk and ribbon code.  If $\Dd = \Dd'\natural\Dd''$ is decomposable, then $\Delta_\Dd = \Delta_{\Dd'}\Delta_{\Dd''}$, so in our efforts to determine $\Rr_4$, we can disregard decomposable ribbon codes, as long as we remember to include all possible products of two elements of $\Rr_2$.  If a ribbon code is not decomposable, it is \emph{indecomposable}.

\begin{figure}[h!]
\begin{subfigure}{.48\textwidth}
  \centering
  \includegraphics[width=.95\linewidth]{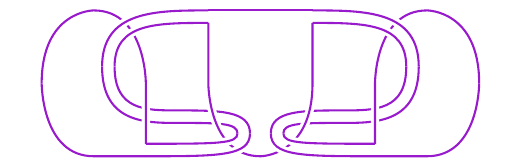}
\end{subfigure} \quad
\begin{subfigure}{.48\textwidth}
  \centering
  \includegraphics[width=.95\linewidth]{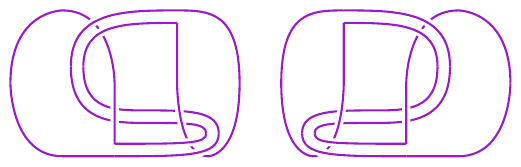}
\end{subfigure} \\
\begin{subfigure}{.48\textwidth}
  \centering
  \includegraphics[width=.95\linewidth]{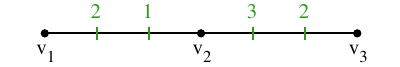}
\end{subfigure} \quad
\begin{subfigure}{.48\textwidth}
  \centering
  \includegraphics[width=.95\linewidth]{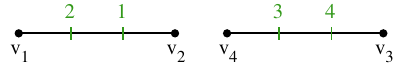}
\end{subfigure}
\caption{A decomposable ribbon disk and ribbon code (left) can be broken into summands (right).}
    \label{fig:decomp}
\end{figure}

To conclude this section, we introduce a new move that will help simplify our argument, a leaf isotopy.  Suppose $\Gamma$ is a ribbon code satisfying the following conditions:
\begin{enumerate}
\item The vertex $v_i$ has degree one (i.e. $v_i$ is a \emph{leaf} of $\Gamma$),
\item The marking $\mu_{\ell}$ adjacent to $v_i$ is labeled $\pm j$,
\item The vertex $v_j$ is adjacent to a marking $\mu_{\ell'}$ labeled $\pm i$, and
\item The marking $\mu_{\ell'}$ is the only marking labeled $\pm i$.
\end{enumerate}
Let $\Gamma'$ be the new ribbon code obtained from $\Gamma$ by removing the marking $\mu_{\ell}$ from near $v_i$ and placing it adjacent to the marking $\mu_{\ell'}$, so that $\mu_{\ell'}$ is between the vertex $v_j$ and the marking $\mu_{\ell}$ in $\Gamma'$, as shown in Figure~\ref{fig:leaf_isotopy}. 
We say that $\Gamma'$ is the result of a \emph{leaf isotopy} performed on~$\Gamma$ at vertex $v_i$.  

\begin{lemma}\label{lem:leaf}
Suppose that $\Gamma$ and $\Gamma'$ are related by a leaf isotopy.  Then $\Delta_{\Gamma} = \Delta_{\Gamma'}$.
\end{lemma}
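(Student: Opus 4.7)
The plan is to show, by a direct Fox-calculus computation, that the Alexander matrices $A_\Gamma$ and $A_{\Gamma'}$ differ by elementary row and column operations of unit determinant over $\Z[t, t^{-1}]$, which preserves the first elementary ideal and hence the Alexander polynomial.

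First I would write down explicitly the two relations that change under the isotopy. Orienting the edges so that $v_i$ appears at the start of $e$ and $v_j$ at the start of $e'$, with $s = \mathrm{sign}(\mu_\ell)$ and $s' = \mathrm{sign}(\mu_{\ell'})$, the relations take the form
\[
r_e = x_j^s\, w\, x_k\, w^{-1}\, x_j^{-s}\, x_i^{-1}, \qquad r_{e'} = x_i^{s'}\, W\, x_{k'}\, W^{-1}\, x_i^{-s'}\, x_j^{-1}
\]
in $\Gamma$, and
\[
r_e' = w\, x_k\, w^{-1}\, x_i^{-1}, \qquad r_{e'}' = x_i^{s'}\, x_j^s\, W\, x_{k'}\, W^{-1}\, x_j^{-s}\, x_i^{-s'}\, x_j^{-1}
\]
in $\Gamma'$, where $w$ and $W$ are the words built from the remaining markings; note that neither contains $x_i$, because $\mu_{\ell'}$ is the unique $\pm i$ marking.

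Next, I would compute the abelianized Fox derivatives $\Delta_a(\cdot)$ and collect them into rows of the Alexander matrix. A direct calculation, using the identity $\{s\}_t + t^{s+1}\{-s\}_t = 1 - t^s$ for $\{s\}_t = (t^s-1)/(t-1)$, produces the clean identities
\[
R_e^\Gamma = t^s R_e^{\Gamma'} + (t^s - 1)(E_i - E_j), \qquad R_{e'}^{\Gamma'} = t^s R_{e'}^\Gamma + (t^s - 1)(t^{s'} - 1)(E_i - E_j),
\]
where $E_a$ is the standard basis row vector in the column for $x_a$. Here the essential point is that the absence of $x_i$ in any relation other than $r_e$ and $r_{e'}$ forces the row differences to be supported only in columns $i$ and $j$, and to be scalar multiples of the single vector $E_i - E_j$.

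I would conclude $\Delta_\Gamma = \Delta_{\Gamma'}$ by first applying the column operation that adds column $i$ to column $j$, an invertible operation of unit determinant; this replaces each occurrence of $E_i - E_j$ by $E_i$, converting the rank-one updates into vectors supported only in column $i$. Then, since column $i$ has identical entries in both matrices (namely $-1$ in row $e$, $1 - t^{s'}$ in row $e'$, and $0$ elsewhere, unchanged by the isotopy), the remaining corrections can be absorbed by row operations that cancel the rank-one modification and rescale rows $e$ and $e'$ by $t^{\pm s}$, whose product is $1$. The principal obstacle will be the careful bookkeeping of signs and exponents through the cofactor expansions, and verifying that alternate edge orientations (which rescale individual rows by units but alter the explicit form of the identities above) are handled uniformly. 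A cleaner but less explicit alternative is to interpret the leaf isotopy geometrically as an ambient isotopy of the disk-band presentation in $B^4$ relative to the boundary knot $K$, sliding the leaf disk $D_i$ through the disk $D_j$: this simultaneously removes the ribbon intersection $\mu_\ell$ and creates the prescribed new intersection on the band containing $\mu_{\ell'}$, leaving $K$ unchanged and therefore preserving $\Delta_K = \Delta_\Gamma$.
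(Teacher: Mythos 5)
Your main argument takes a genuinely different route from the paper's. The paper's proof of Lemma~\ref{lem:leaf} is exactly the geometric argument you relegate to your final sentence: it exhibits, for a code satisfying conditions (1)--(4), two disk-band presentations $\D$ and $\D'$ realizing $\Gamma$ and $\Gamma'$ whose boundaries are isotopic (Figure~\ref{fig:leaf_isotopy}), so $\Delta_\Gamma=\Delta_{\pd\D}=\Delta_{\pd\D'}=\Delta_{\Gamma'}$, with the sign cases handled by flipping a disk and absorbing a half-twist into a band. Your Fox-calculus route is a legitimate alternative, and your two row identities are correct as stated; note also that the ambiguity in reading off the exponent of the transported marking in $\Gamma'$ (local direction versus marking sign) is harmless, since the opposite choice only changes the product of the two row scalars from $t^{s}\cdot t^{-s}=1$ to $t^{\pm 2s}$, still a unit. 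What the algebraic route buys is independence from the figure; what it costs is the bookkeeping you flag, plus the issue below.

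The one step that does not work as written is the last: ``the remaining corrections can be absorbed by row operations.'' After your column operation the corrections are nonzero multiples of $E_i$, and $E_i$ is never in the $\Z[t,t^{-1}]$-row span of the Alexander matrix: by the fundamental Fox identity $\sum_b \frac{\partial r}{\partial x_b}\big|_t\,(t-1)=\bar r-1=0$, every row has coordinate sum zero, while $E_i$ has coordinate sum one. So no sequence of row additions cancels these terms. The repair is to use that same identity differently: the matrix is $(n-1)\times n$ with columns summing to zero, so all maximal minors agree up to units and the first elementary ideal is generated by the single minor obtained by deleting column $i$. Deleting column $i$ annihilates the corrections entirely, leaving two square matrices that differ only by rescaling row $e$ by $t^{s}$ and row $e'$ by $t^{-s}$; their determinants therefore agree, and the Alexander polynomials coincide up to units. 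With that substitution for the final step, your computation gives a complete proof.
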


\begin{proof}
It suffices to find a knot $K$ bounding disks $\D$ and $\D'$ yielding $\Gamma$ and $\Gamma'$, respectively.  If the conditions above are satisfied, we can construct the disks $\D$ and $\D'$ as shown in Figure~\ref{fig:leaf_isotopy}, noting that $\pd \D$ and $\pd \D'$ are isotopic.  If the sign of either marking is opposite what is shown in the figure, we can flip one or both disks, which results in a half-twist added to one or both bands; thus, the figure is still representative of all possible cases.
\end{proof}

\begin{figure}[h!]
\begin{subfigure}{.48\textwidth}
  \centering
  \includegraphics[width=.35\linewidth]{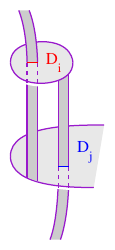}
\end{subfigure} 
\begin{subfigure}{.48\textwidth}
  \centering
  \includegraphics[width=.35\linewidth]{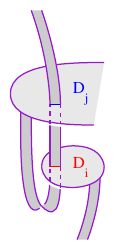}
\end{subfigure} \\
\begin{subfigure}{.48\textwidth}
  \centering
  \includegraphics[width=.45\linewidth]{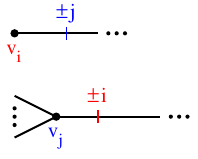}
\end{subfigure}%
\begin{subfigure}{.48\textwidth}
  \centering
  \includegraphics[width=.45\linewidth]{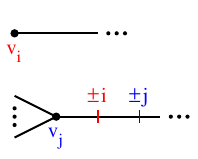}
\end{subfigure}
\caption{Disk $\D$ before (left) and disk $\D'$ after (right) a leaf isotopy, with corresponding ribbon codes.  Note that the vertex $v_i$ must be a leaf, and the label $\pm i$ must appear only once.}
    \label{fig:leaf_isotopy}
\end{figure}

\begin{remark}
In~\cite{FMZ}, the authors identified eight different irreducible ribbon codes with ribbon number three, but $\Rr_3 \setminus \Rr_2$ contains only seven Alexander polynomials because two codes, $([1,2,-3,2],[2,1,3])$ and $([1,-2,-3,2],[2,1,3])$, are related by a leaf isotopy: We can change
$$([\textcolor{red}{1},\textcolor{blue}{2},-3,2],[\textcolor{blue}{2},\textcolor{red}{1},3]) \rightarrow ([\textcolor{red}{1},-3,2],[\textcolor{blue}{2},\textcolor{red}{1},\textcolor{blue}{2},3]),$$
via a leaf isotopy at $v_1$.
The resulting code is isomorphic to $([1,2,3,2],[2,-1,3])$ via interchanging $3$ and $1$, which is related to $([1,-2,-3,2],[2,1,3])$ by Lemma~\ref{lem:negate}.
\end{remark}

\section{Determining $\mathfrak R_4$}
\label{sec:R4}

In this section, we determine the set $\mathfrak R_4$ consisting of all Alexander polynomials of ribbon knots that bound ribbon disks with four ribbon intersections.
The main result is the following.

\begin{proposition}
\label{prop:R4}
	If $K$ is a ribbon knot and $\Delta_K(t)\in\mathfrak R_4\setminus\mathfrak R_3$, then $\Delta_K(t)$ is one of the 46 entries in Table~\ref{table:R4}.
\end{proposition}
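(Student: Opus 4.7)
The plan is to enumerate, up to isomorphism, leaf isotopy (Lemma~\ref{lem:leaf}), and the global sign-flip symmetry (Lemma~\ref{lem:negate}), all irreducible, indecomposable ribbon codes $\Gamma$ with $r(\Gamma)=4$, to compute $\Delta_\Gamma(t)$ for each, and finally to append any Alexander polynomials arising from decomposable codes of ribbon number $4$ that are not already realized by an indecomposable code or by a member of $\Rr_3$.

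First, I would classify the possible \emph{structures} of the underlying marked graph. Condition~(1) of irreducibility forces each edge to carry at least one marking, so the fusion number satisfies $\Ff(\Gamma)\le r(\Gamma)=4$, and (as in~\cite{FMZ}) the underlying graph is a tree on $\Ff(\Gamma)+1$ vertices. Thus the task reduces to listing trees on at most five vertices together with admissible distributions of the four markings among the edges, subject to irreducibility conditions~(3) (no marking adjacent to $v_i$ is labeled $\pm i$) and~(4) (every valence-one or valence-two vertex appears as some label). This step should reproduce the eight structures (1)--(8) referenced in Figure~\ref{fig:R4_structures} and Remark~\ref{rmk:howie}.

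Second, for each structure I would iterate over all sign-and-index assignments to the markings, discarding those that violate irreducibility condition~(2), those that yield a decomposable code (detected locally via the criterion stated before Figure~\ref{fig:decomp}), and those equivalent to a previously listed code under a graph automorphism, under the negation symmetry, or under a leaf isotopy. The surviving assignments form the complete list of ribbon codes recorded in Tables~\ref{table:rc1-2}, \ref{table:rc3-5}, and \ref{table:rc6-8}. For each such code I would then compute the Alexander half-polynomial $\Delta_\Gamma(t)$ directly from the marked tree via the Fox-calculus procedure underlying Proposition~\ref{prop:alex}. Collecting these polynomials, removing the $10$ elements of $\Rr_3$ (Proposition~\ref{prop:r2r3}), and checking the finitely many pairwise products of elements of $\Rr_2$ to see which (if any) are new should leave exactly the $46$ entries claimed in Table~\ref{table:R4}.

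The hard part will be the combinatorial bookkeeping: on the richer structures the number of sign-and-index assignments is large, and one must apply every available reduction (graph automorphism, negation, leaf isotopy, decomposability) consistently before counting, and take care that codes not related by any of these moves are distinct representatives even when they happen to share the same Alexander polynomial. In practice, the enumeration is best implemented by computer, with the polynomial of each admissible code read off from the Alexander matrix canonically associated to the ribbon code; the cross-check against Yasuda's independent enumeration of Alexander polynomials of ribbon $2$-knots with ribbon crossing number at most four (see Remark~\ref{rem:yasuda}) then serves as external verification that nothing has been missed.
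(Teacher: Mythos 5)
Your proposal is correct and follows essentially the same route as the paper: reduce to irreducible codes (via Proposition~\ref{prop:reduc}), classify the eight tree structures, enumerate sign-and-index assignments modulo isomorphism, negation, leaf isotopy, and decomposability, compute each polynomial by computer via Fox calculus, and account for decomposable codes by products of elements of $\Rr_2$. The only step worth making explicit is the appeal to Proposition~\ref{prop:reduc} guaranteeing that every polynomial in $\Rr_4\setminus\Rr_3$ is realized by an \emph{irreducible} code of ribbon number exactly four, which your plan uses implicitly.
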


\begin{proof}
	The proof is by brute force.
	Suppose $K$ is a ribbon knot bounding a ribbon disk $\D$ with ribbon number 4.
	By Proposition~\ref{prop:reduc}, there is an irreducible ribbon code $\Gamma$ with ribbon number at most 4 such that $\Delta_{\Gamma} = \Delta_K$.  If $\Gamma$ has ribbon number less than 4, then $\Delta_K(t)\in\mathfrak R_3$, a contradiction.
	Thus, $\Gamma$ is an irreducible ribbon code with ribbon number 4.
	
	In Lemma~\ref{lem:structures}, below, we identify the 8 possible structures for $\Gamma$.
	In Lemma~\ref{lem:4codes}, we give a list that includes every indecomposable, irreducible ribbon code with ribbon number 4, though duplication is rampant.
	For each ribbon code on the list, using a Sage program, we employ Fox calculus~\cite{crowell-fox} to determine the Alexander polynomial of the ribbon code.
	Note that the decomposable irreducible ribbon codes with ribbon number 4 are exactly those polynomials which are the product of two elements of $\Rr_2$; these polynomials appear on the list.
	The resulting Alexander polynomials are presented in Tables~\ref{table:R3} and~\ref{table:R4}, depending on whether or not they occur in $\mathfrak R_3$.
	\end{proof}

\begin{table}[!ht]
    \centering
	\resizebox*{!}{.95\dimexpr\textheight-1\baselineskip\relax}{%
    \begin{tabular}{|l|l|l|l|}
    \hline
        Det & Alexander Polynomial & Alexander Half-Polynomial & 2-knot~\cite{yasuda:2-knot} \\ \hline
        1 & $1 - 4t + 4t^2 + 4t^3 - 11t^4 + 4t^5 + 4t^6 - 4t^7 + t^8$ & $-1 + 2t + t^2 - 2t^3 + t^4$ &  \\ \hline
        1 & $1 - 3t - t^2 + 7t^3 - t^4 - 3t^5 + t^6$ & $-1 + t + 2t^2 -t^3$ & $K_{25}$  \\ \hline
        1 & $1 - 3t + 2t^2 + 3t^3 - 7t^4 + 3t^5 + 2t^6 - 3t^7 + t^8$ & $1 - t + 2t^3 - t^4$ & $K_{24}$  \\ \hline
        1 & $1 - 2t + t^2 + 2t^3 - 5t^4 + 2t^5 + t^6 - 2t^7 + t^8$ & $1 + - t + t^2 + t^3 - t^4$ & $K_{28}$  \\ \hline
        1 & $1 - 2t^2 + 3t^4 - 2t^6 + t^8$ & $1 - t^2 + t^4$ & $K_{5}$  \\ \hline
        1 & $2 - 4t - 2t^2 + 9t^3 - 2t^4 - 4t^5 + 2t^6$ & $2 - 2t^2 + t^3$ &   \\ \hline
        1 & $2 - 3t - 2t^2 + 7t^3 - 2t^4 - 3t^5 + 2t^6$ & $1 - t - t^2 + 2t^3$ & $K_{29}$  \\ \hline
        1 & $2 - 5t^2 + 2t^4$ & $2 - t^2$ & $K_{6}$  \\ \hline
        9 & $1 - 4t + 3t^2 + 6t^3 - 13t^4 + 6t^5 + 3t^6 - 4t^7 + t^8$ & $1 - t - t^2 + 3t^3 - t^4$ &   \\ \hline
        9 & $1 - 3t + 2t^2 + 5t^3 - 11t^4 + 5t^5 + 2t^6 - 3t^7 + t^8$ & $-1 + t + 2t^2 - 2t^3 + t^4$ &   \\ \hline
        9 & $1 - 2t + 4t^3 - 7t^4 + 4t^5 - 2t^7 + t^8$ & $1 - t^2 + 2t^3 - t^4$ & $K_{19}$  \\ \hline
        9 & $1 - t - 3t^2 + 7t^3 - 3t^4 - t^5 + t^6$ & $1 + t - 2t^2 + t^3$ & $K_{12}$  \\ \hline
        9 & $1 - t - t^3 + 3t^4 - t^5 - t^7 + t^8$ & $1 - t^3 + t^4$ & $K_{1}$  \\ \hline
        9 & $1 - t + t^2 - 3t^3 + t^4 - t^5 + t^6$ & $1 + t^2 - t^3$ & $K_{3},K_{14}$  \\ \hline
        9 & $2 - 2t - 4t^2 + 9t^3 - 4t^4 - 2t^5 + 2t^6$ & $1 - 2t^2 + 2t^3$ & $K_{11}$  \\ \hline
        25 & $1 - 3t + 9t^3 - 15t^4 + 9t^5 - 3t^7 + t^8$ & $1 - 2t^2 + 3t^3 - t^4$ &   \\ \hline
        25 & $1 - 2t + 3t^2 - 4t^3 + 5t^4 - 4t^5 + 3t^6 - 2t^7 + t^8$ & $1 - t + t^2 - t^3 + t^4$ & $K_{2}$  \\ \hline
        25 & $1 - t - 7t^2 + 15t^3 - 7t^4 - t^5 + t^6$ & $1 + 2t - 3t^2 + t^3$ &   \\ \hline
        25 & $6 - 13t + 6t^2$ & $3 - 2t$ & $K_{4}$  \\ \hline
        49 & $1 - 4t + 6t^2 - 8t^3 + 11t^4 - 8t^5 + 6t^6 - 4t^7 + t^8$ & $-1 + 2t - t^2 + 2t^3 - t^4$ & $K_{57}$; \small see Remark~\ref{rem:yasuda}  \\ \hline
        49 & $1 - 3t + 6t^2 - 9t^3 + 11t^4 - 9t^5 + 6t^6 - 3t^7 + t^8$ & $1 - t + 2t^2 - 2t^3 + t^4$ & $K_{21}$  \\ \hline
        49 & $2 - 12t + 21t^2 - 12t^3 + 2t^4$ & $-2t + 4t - t^2$ & $K_{22}$  \\ \hline
        49 & $2 - 6t + 10t^2 - 13t^3 + 10t^4 - 6t^5 + 2t^6$ & $2 - 2t + 2t^2 - t^3$ & $K_{20}$  \\ \hline
        49 & $4 - 12t + 17t^2 - 12t^3 + 4t^4$ & $2 - 3t + 2t^2$ & $K_{23},K_{56}$  \\ \hline
        81 & $1 - 7t + 19t^2 - 27t^3 + 19t^4 - 7t^5 + t^6$ & $-1 + 4t - 3t^2 + t^3$ & $K_{61},K_{62}$  \\ \hline
        81 & $1 - 5t + 10t^2 - 15t^3 + 19t^4 - 15t^5 + 10t^6 - 5t^7 + t^8$ & $-1 + 2t - 2t^2 + 3t^3 - t^4$ & $K_{85}$  \\ \hline
        81 & $1 - 4t + 9t^2 - 16t^3 + 21t^4 - 16t^5 + 9t^6 - 4t^7 + t^8$ & $1 - t + 3t^2 - 3t^3 + t^4$ & $K_{59}$  \\ \hline
        81 & $1 - 4t + 10t^2 - 16t^3 + 19t^4 - 16t^5 + 10t^6 - 4t^7 + t^8$ & $1 - 2t + 3t^2 - 2t^3 + t^4$ & $K_{40},K_{43},K_{46},K_{83}$  \\ \hline
        81 & $2 - 9t + 18t^2 - 23t^3 + 18t^4 - 9t^5 + 2t^6$ & $2 - 3t + 3t^2 - t^3$ & $K_{41},K_{44},K_{49},K_{60}$  \\ \hline
        81 & $2 - 8t + 18t^2 - 25t^3 + 18t^4 - 8t^5 + 2t^6$ & $1 - 2t + 4t^2 - 2t^3$ & $K_{86}$  \\ \hline
        81 & $4 - 20t + 33t^2 - 20t^3 + 4t^4$ & $4 - 4t + t^2$ \textbf{or} $2-5t+2t^2$ & $K_{47},K_{48},K_{50},K_{51}$  \\ \hline
        81 & $6 - 20t + 29t^2 - 20t^3 + 6t^4$ & $3 - 4t + 2t^2$ & $K_{84}$  \\ \hline
        121 & $1 - 9t + 29t^2 - 43t^3 + 29t^4 - 9t^5 + t^6$ & $-1 + 5t - 4t^2 + t^3$ & $K_{72}$  \\ \hline
        121 & $1 - 6t + 15t^2 - 24t^3 + 29t^4 - 24t^5 + 15t^6 - 6t^7 + t^8$ & $-1 + 3t - 3t^2 + 3t^3 - t^4$ & $K_{69}$  \\ \hline
        121 & $1 - 5t + 14t^2 - 25t^3 + 31t^4 - 25t^5 + 14t^6 - 5t^7 + t^8$ & $1 - 2t + 4t^2 - 3t^3 + t^4$ & $K_{67},K_{70}$  \\ \hline
        121 & $2 - 12t + 28t^2 - 37t^3 + 28t^4 - 12t^5 + 2t^6$ & $2 - 4t + 4t^2 - t^3$ & $K_{71},K_{74}$  \\ \hline
        121 & $2 - 11t + 28t^2 - 39t^3 + 28t^4 - 11t^5 + 2t^6$ & $1 - 3t + 5t^2 - 2t^3$ & $K_{73}$  \\ \hline
        121 & $3 - 13t + 27t^2 - 35t^3 + 27t^4 - 13t^5 + 3t^6$ & $3 - 4t + 3t^2 - t^3$ & $K_{68}$  \\ \hline
        169 & $1 - 7t + 20t^2 - 35t^3 + 43t^4 - 35t^5 + 20t^6 - 7t^7 + t^8$ & $-1 + 4t - 4t^2 + 3t^3 - t^4$ & $K_{93},K_{94},K_{96}$  \\ \hline
        169 & $1 - 6t + 18t^2 - 36t^3 + 47t^4 - 36t^5 + 18t^6 - 6t^7 + t^8$ & $1 - 2t + 5t^2 - 4t^3 + t^4$ & $K_{98}$  \\ \hline
        169 & $1 - 6t + 19t^2 - 36t^3 + 45t^4 - 36t^5 + 19t^6 - 6t^7 + t^8$ & $1 - 3t + 5t^2 - 3t^3 + t^4$ & $K_{95}$  \\ \hline
        169 & $2 - 14t + 40t^2 - 57t^3 + 40t^4 - 14t^5 + 2t^6$ & $1 - 4t + 6t^2 - 2t^3$ & $K_{92}$  \\ \hline
        169 & $3 - 17t + 39t^2 - 51t^3 + 39t^4 - 17t^5 + 3t^6$ & $3 - 5t + 4t^2 - t^3$ & $K_{91},K_{97}$  \\ \hline
        225 & $1 - 8t + 26t^2 - 48t^3 + 59t^4 - 48t^5 + 26t^6 - 8t^7 + t^8$ & $-1 + 4t - 5t^2 + 4t^3 - t^4$ & $K_{109}$  \\ \hline
        225 & $1 - 7t + 24t^2 - 49t^3 + 63t^4 - 49t^5 + 24t^6 - 7t^7 + t^8$ & $1 - 3t + 6t^2 - 4t^3 + t^4$ & $K_{108}$  \\ \hline
        225 & $4 - 22t + 52t^2 - 69t^3 + 52t^4 - 22t^5 + 4t^6$ & $4 - 6t + 4t^2 - t^3$ & $K_{107}$ \\ \hline
   	\end{tabular}}
	\caption{The 46 members of $\mathfrak R_4\setminus\mathfrak R_3$, ordered by determinant}
	\label{table:R4}
\end{table}

Note that the fourth column of Table~\ref{table:R4} contains the ribbon 2-knot(s) whose Alexander polynomial agrees with the corresponding Alexander half-polynomial, using Yasuda's notation~\cite{yasuda:2-knot}.  See Remark~\ref{rem:yasuda} for further details.

\begin{lemma}
\label{lem:structures}
	Suppose $\Gamma$ is an irreducible ribbon code with ribbon number four.
	Then $\Gamma$ has one of the eight structures shown in Figure~\ref{fig:R4_structures}.
\end{lemma}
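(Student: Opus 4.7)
The plan is to enumerate the eight structures by a combinatorial analysis of the underlying graph. The key starting point is that the underlying graph of any ribbon code is a tree: a ribbon disk $\Dd$ is connected and simply connected, and a disk-band presentation $(D,B)$ satisfies $\chi(\Dd) = |D| - |B| = 1$ while its associated graph is connected, so the graph is a tree on $V = |D|$ vertices with $V-1$ edges.

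Since $\Gamma$ is irreducible, condition (1) in the definition of reducibility forces every edge to contain at least one marking, so $V - 1 = \F(\Gamma) \leq r(\Gamma) = 4$, giving $V \in \{2,3,4,5\}$. The unlabeled trees of each order are: $K_2$ for $V = 2$; $P_3$ for $V = 3$; $P_4$ and the star $K_{1,3}$ for $V = 4$; and $P_5$, the ``T-shape'' with degree sequence $(3,2,1,1,1)$, and the star $K_{1,4}$ for $V = 5$. Irreducibility condition (4) requires every vertex of valence at most two to appear as the absolute value of some marking label; with only four markings available, any tree having more than four valence-$\leq 2$ vertices is excluded, which immediately eliminates $P_5$. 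For the T-shape and $K_{1,4}$ the unique high-valence vertex need not appear, but the remaining four vertices must, which forces the marking distribution to be one per edge.

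For each of the remaining trees, the next step is to enumerate the distributions of the four markings across the edges (with at least one per edge) up to tree automorphism. This gives: one distribution on $K_2$; the two distributions $(3,1)$ and $(2,2)$ on $P_3$; the two distributions $(2,1,1)$ and $(1,2,1)$ on $P_4$ together with the unique distribution on $K_{1,3}$ (one edge carries two markings, the others carry one each); and on each of the T-shape and $K_{1,4}$ the single one-marking-per-edge distribution forced above. Adding, $1 + 2 + 3 + 2 = 8$, matching the figure.

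The main task is being vigilant about tree automorphisms so that distributions like $(3,1)$ and $(1,3)$, or $(2,1,1)$ and $(1,1,2)$, are correctly identified, while asymmetric distributions such as $(1,2,1)$ remain distinct. I do not anticipate a substantive obstacle beyond this bookkeeping, since irreducibility conditions (2) and (3) constrain the allowed labelings of markings on a given structure rather than the underlying structures themselves, and thus do not affect the count.
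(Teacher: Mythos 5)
Your proof is correct and follows essentially the same route as the paper's: the underlying graph is a tree, irreducibility forces a marking on every edge so there are at most five vertices, and one then enumerates trees and marking distributions up to automorphism. In fact you supply a detail the paper leaves implicit --- the paper counts ``six trees'' without explaining why $P_5$ is excluded, whereas you correctly rule it out via irreducibility condition (4), since $P_5$ has five vertices of valence at most two but only four markings are available to label them.
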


\begin{figure}[h!]
    \centering
    \includegraphics[width=0.75\textwidth]{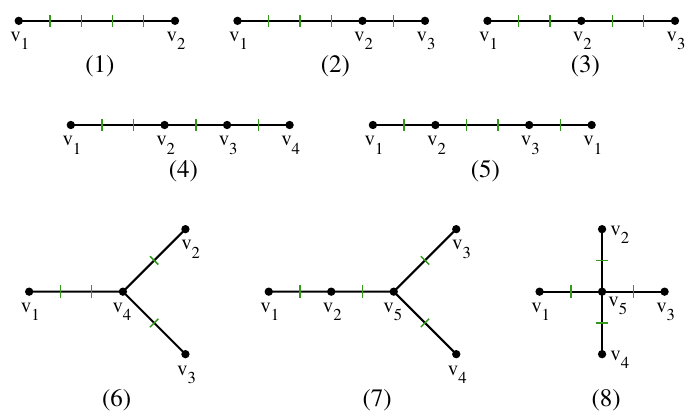}
    \caption{Ribbon code structures for ribbon number 4}
    \label{fig:R4_structures}
\end{figure}

\begin{proof}
	Suppose $\Gamma$ is an irreducible ribbon code with ribbon number four.
	The structure of $\Gamma$ is that of a tree with at least two vertices.
	Since $\Gamma$ is irreducible, it must have an edge marking on every edge.
	Since there are four edge markings, there are at most five vertices.
	Up to isomorphism, there are six different trees fitting these criteria, and there are eight ways to distribute the edge markings on the edges of these trees.
	The eight resulting structures are shown in Figure~\ref{fig:R4_structures}.
\end{proof}

In the next proof, we will use a number of leaf isotopies to convert ribbon codes from one structure to another.  We indicate these isotopies with the tuple notation, and we color-code vertices and markings for clarity, as in the definition of leaf isotopies in Section~\ref{sec:prelim}.  Sometimes a leaf isotopy will result in a reducible ribbon code $\Gamma'$ with an edge with no marking; in this case, we replace $\Gamma$ with an irreducible code by eliminating the edge and relabeling any affected markings with a single vertex.  For instance, the sequence below shows a leaf isotopy followed by a reduction, in which vertices $v_2$ and $v_3$ are combined into a single vertex, and any marking labeled $\pm 3$ is replaced with by $\pm 2$:
$$([\textcolor{blue}{1},\textcolor{red}{3},2,1,2],[2,\textcolor{blue}{1},\textcolor{red}{3}]) \rightarrow ([\textcolor{blue}{1},\textcolor{red}{3},\textcolor{blue}{1},2,1,2],[2,\textcolor{red}{3}]) \rightarrow ([1,2,1,2,1,2]).$$
If we combine, say, vertices $v_1$ and $v_2$, we shift all indices lower accordingly:
$$[\red{1},\blue{3},2],[2,4,2,3],[\blue{3},\red{1},4]) \rightarrow ([\red{1},2],[2,4,2,3],[\blue{3},\red{1},\blue{3},4]) \rightarrow ([1,3,1,2],[2,1,2,3]).$$
Note that the existence of a leaf isotopy is independent of the signs of the markings, and so we omit the signs for ease of notation in the above and in future instances of describing leaf isotopies.

\begin{lemma}
\label{lem:4codes}
	There are at most 118 inequivalent indecomposable, irreducible ribbon codes with ribbon number four; they are listed in Tables~\ref{table:rc1-2},~\ref{table:rc3-5}, and~\ref{table:rc6-8}.
	Organizing these by structure, and after eliminating some redundancies with leaf isotopies, we have:
	\begin{enumerate}
		\item Structure 1 admits at most 10 inequivalent indecomposable, irreducible ribbon codes.
		\item Structure 2 admits at most 24 inequivalent indecomposable, irreducible ribbon codes.
		\item Structure 3 admits at most 20 inequivalent indecomposable, irreducible ribbon codes.
		\item Structure 4 admits at most 16 inequivalent indecomposable, irreducible ribbon codes.
		\item Structure 5 admits at most  8 inequivalent indecomposable, irreducible ribbon codes.
		\item Structure 6 admits at most  28 inequivalent indecomposable, irreducible ribbon codes.
		\item Structure 7 admits at most  8 inequivalent indecomposable, irreducible ribbon codes.
		\item Structure 8 admits at most  4 inequivalent indecomposable, irreducible ribbon codes.
	\end{enumerate}
\end{lemma}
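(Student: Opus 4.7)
The plan is to enumerate, for each of the eight structures of Figure~\ref{fig:R4_structures}, every legal signed labeling of the four markings and then quotient by the equivalences arising from graph automorphisms of the underlying tree that preserve the prescribed marking distribution, from simultaneous sign flip on all markings (Lemma~\ref{lem:negate}), from leaf isotopies (Lemma~\ref{lem:leaf}), and finally by removing decomposable codes. Fix a structure with underlying tree $T$ on vertices $v_1,\dots,v_n$, where $2\le n\le 5$. The signed labels on an edge marking range over $\{\pm 1,\dots,\pm n\}$, subject to the irreducibility rules recalled before Proposition~\ref{prop:reduc}: the marking nearest $v_i$ on an edge cannot be labeled $\pm i$; no two consecutive markings on a common edge may be labeled $\pm i$ and $\mp i$; and every valence-one or valence-two vertex must appear as the magnitude of some marking label.

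For each structure I would first normalize by the sign-flip symmetry (halving the $2^4=16$ sign choices to~$8$), then enumerate the resulting reduced set of signed-label tuples, then group these into isomorphism classes under the automorphisms of $T$ consistent with the prescribed marking distribution. Next, I would discard codes that are decomposable at some degree-$\ge 2$ vertex $v_i$, meaning every marking $\mu_\ell$ lies on the same side of $v_i$ as $v_{|\mu_\ell|}$. Finally, I would apply leaf isotopies: for each leaf $v_i$ satisfying the hypotheses of Lemma~\ref{lem:leaf}, performing the isotopy (and any resulting reduction along the lines of the example in the remark at the end of Section~\ref{sec:prelim}) may identify codes originally enumerated under \emph{different} structures, since the isotopy can move a marking from one edge to a neighboring edge and thus shift the underlying marking distribution. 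These cross-structure identifications must be tracked globally before tabulation. After all reductions, the surviving representatives populate Tables~\ref{table:rc1-2}, \ref{table:rc3-5}, and~\ref{table:rc6-8}, and the structure-by-structure totals $10,24,20,16,8,28,8,4$ sum to~$118$.

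The hard part will not be any single argument but the bookkeeping. Structures~1 and~2, with the four markings packed onto one or two edges, produce many raw sign-label tuples and relatively few topological symmetries, so the consecutive-marking and nearest-vertex constraints must be checked mechanically on each tuple; Structures~6 through~8, with more vertices and more branching, have fewer raw tuples but admit many nontrivial tree automorphisms and viable leaf isotopies, making it easy to overlook a symmetry and double-count. I would carry out the enumeration by hand for the smallest structures and by a short Sage script for the rest, cross-checking the output by computing the Alexander polynomial of each code via Fox calculus (as needed for Proposition~\ref{prop:R4}) and verifying that codes related by the listed equivalences produce identical polynomials; any discrepancy would flag a missed leaf isotopy or automorphism and force a recount.
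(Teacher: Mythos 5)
Your plan coincides with the paper's proof in every structural respect: the paper proceeds structure by structure, normalizes by negating all markings and by permuting vertex labels, imposes the irreducibility rules to constrain the label tuples, discards decomposable codes, and uses leaf isotopies to eliminate codes that reduce to codes of a different (already-counted or simpler) structure. You have also correctly identified the one genuinely delicate point, namely that a leaf isotopy can change the underlying marking distribution and so identifies codes across structures; the paper handles this by always discarding the code from the structure in which the isotopy applies and retaining only the target.

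The gap is that you have not actually carried out the enumeration. For a lemma whose entire content is the assertion that eight specific finite counts are $10, 24, 20, 16, 8, 28, 8, 4$, the case analysis \emph{is} the proof: one must, for each structure, determine which positions can carry which labels (e.g.\ for Structure 1 that the markings nearest $v_1$ and $v_2$ are forced to be $\pm2$ and $\pm1$ respectively, for Structures 4, 5, 7, 8 that the four label magnitudes must be distinct, and so on), identify exactly which residual tuples admit a leaf isotopy or a decomposition, and tally the surviving sign choices. None of these determinations appears in your write-up, so none of the eight counts is established. Your proposed cross-check --- computing Alexander polynomials by Fox calculus and flagging discrepancies --- is a sanity check on the output, not a substitute for the enumeration, since two inequivalent codes may share an Alexander polynomial and agreement of polynomials would not certify that an equivalence was correctly applied. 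To complete the proof you must execute the eight cases explicitly, as the paper does.
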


These ribbon codes are listed in Tables~\ref{table:rc1-2},~\ref{table:rc3-5}, and~\ref{table:rc6-8}, where they are organized by structure based on the cases occurring in the proof.

\begin{proof}
	We consider the eight structures in turn.
	
	\textbf{(Structure 1)} Suppose $\Gamma$ is an irreducible ribbon code with Structure 1.
	Let the tuple  $(\mu_1, \mu_2, \mu_3, \mu_4)$ denote the tuple of four marking labels on the unique edge, in order from $v_1$ to $v_2$.
	Since $\Gamma$ is irreducible, we must have $\mu_1=\pm2$ and $\mu_4=\pm1$, with no consecutive labels differing only in sign.
	After potentially relabeling the vertices, we can assume there are at least as many edge markings of $\pm2$ as there are of $\pm1$.
	After potentially negating all the edge markings, we can assume without loss of generality that $\mu_1=2$.
	If there are three negative markings, then tuple of labels is $(2,-1,-2,-1)$, for which negating all the edge markings and relabeling the vertices gives the tuple $(2,1,2,-1)$.  Thus, we can assume that at most two markings are negative.
	This leaves the following 10 possibilities for the edge markings: $(2,2,2,\pm 1)$, $(2,2,\pm1, \pm1)$, $(2,1,2,\pm 1)$, $(2,1,-2,\pm 1)$, or $(2,-1,2,\pm 1)$.

	In total, we have 10 ribbon codes of Structure 1.

	\textbf{(Structure 2)} Suppose $\Gamma$ is an irreducible ribbon code with Structure 2.
	Since $\Gamma$ is irreducible, after potentially negating all edge markings, we can assume that the marking on edge $\{v_2,v_3\}$ is $+1$.
	Let $(\mu_1, \mu_2, \mu_3)$ denote the three markings on edge $\{v_1,v_2\}$, in order from $v_1$ to $v_2$.
	
	\emph{Case 2.1:} There cannot be two instances of 1 among the $|\mu_i|$, since this would violate the rule that every vertex index appear as an edge marking.
	
	\emph{Case 2.2:} Suppose there are two instances of 2 among the $|\mu_i|$.
	Then, we must have $(|\mu_1|,|\mu_2|,|\mu_3|) = (2, 2, 3)$, where $\mu_1$ and $\mu_2$ have the same sign by irreducibility.
	The four choices for signs give 4 ribbon codes.
	
	\emph{Case 2.3:} Suppose there are two instances of 3 among the $|\mu_i|$.
	Then, we have the following two possibilities for $(|\mu_1|,|\mu_2|,|\mu_3|)$: $(2, 3, 3)$ or $(3, 2, 3)$.
	In the first case, there are four choices for signs.
	In the second case, there is a leaf isotopy
	\[ ([\red{1},\blue{3},2,3,2),(2,\red{1},\blue{3})] \rightarrow ([\red{1},2,3,2],[2,\blue{3},\red{1},\blue{3}]),\]
	which falls into Case 3.1 counted below.
	This gives 4 ribbon codes.
	
	\emph{Case 2.4:} If all the $|\mu_i|$ are distinct, then we have the following three possibilities for $(|\mu_1|,|\mu_2|,|\mu_3|)$: $(2,1,3)$, $(2,3,1)$, or $(3,2,1)$.
	In the third case, the code is reducible via a leaf isotopy at $v_3$ and need not be counted:
$$([\textcolor{blue}{1},\textcolor{red}{3},2,1,2],[2,\textcolor{blue}{1},\textcolor{red}{3}]) \rightarrow ([\textcolor{blue}{1},\textcolor{red}{3},\textcolor{blue}{1},2,1,2],[2,\textcolor{red}{3}]) \rightarrow ([1,2,1,2,1,2]).$$
In each other case, there are eight choices for signs.
	This gives 16 ribbon codes.
	
	In total, we have 24 ribbon codes of Structure 2.
	
	\textbf{(Structure 3)} Suppose $\Gamma$ is an irreducible ribbon code with Structure 3.
	Let the tuple $(\mu_1,\mu_2,\mu_3,\mu_4)$ denote the edge markings, from left to right, with $v_2$ lying between $\mu_2$ and $\mu_3$.
	There must be at least one instance of 2 among the $|\mu_i|$.
	After potentially negating all edge markings and permuting the vertex labels, we can assume that $\mu_1=2$.
	We consider the number of instances of 2 among the $|\mu_i|$.
	
	\emph{Case 3.1:} First assume that there is only the one instance of 2 among the $|\mu_i|$.
	In this case, we must have $|\mu_4|=1$.
	The possibilities for $(|\mu_2|,|\mu_3|)$ are $(3,1)$, $(1,3)$, and $(3,3)$.
	This gives 20 ribbon codes: four from the first case and eight from each other cases.
	
	\emph{Case 3.2:} Now assume there are two instances of 2 among the $|\mu_i|$, so we must have $|\mu_4|=2$ and $\{|\mu_2|,|\mu_3|\} = \{1,3\}$.
	If $(|\mu_2|,|\mu_3|) = (1,3)$, then the ribbon code is the decomposable code shown in Figure~\ref{fig:decomp} and need not be counted.	 If $(|\mu_2|,|\mu_3|) = (3,1)$, then the ribbon code is reducible via a leaf isotopy at $v_1$ (or $v_3$) to a ribbon code of Structure 2 and need not be counted:
	$$([\textcolor{red}{1},\textcolor{blue}{2},3,2],[\textcolor{blue}{2},\textcolor{red}{1},2,3]) \rightarrow ([\textcolor{red}{1},3,2],[\blue{2},\red{1},\blue{2},2,3]).$$
	
	In total, we have 20 ribbon codes of Structure 3.

	\textbf{(Structure 4)} Suppose $\Gamma$ is an irreducible ribbon code with Structure 4.
	Let the tuple $(\mu_1,\mu_2,\mu_3,\mu_4)$ denote the edge markings, with $\mu_i$ adjacent to $v_i$.
	Note that the $|\mu_i|$ must be distinct.
	We must have $|\mu_4|\in\{1,2\}$, and we can assume that $\mu_4$ is positive.
	
	\emph{Case 4.1:} If $\mu_4=1$, then we must have $|\mu_1|=2$, $|\mu_2|=3$, and $|\mu_3|=4$.
	This gives 8 ribbon codes.
	
	\emph{Case 4.2:} If $\mu_4=2$, then $(|\mu_1|,|\mu_2|,|\mu_3|)$ is either $(3,1,4)$, $(3,4,1)$, or $(4,3,1)$.
	In the first two cases, the code is subject to a leaf isotopy at $v_4$ reducing it to a code of Structure 3 or Structure 2, respectively, and need not be counted:
	$$([1,3,1,2],[\blue{2},\red{4},3],[3,\blue{2},\red{4}]) \rightarrow ([1,3,1,2],[\blue{2},\red{4},\blue{2},3],[3,\red{4}]) \rightarrow ([1,3,1,2],[2,3,2,3]),$$
	and
	$$([1,3,\red{4},\blue{2}],[2,1,3],[3,\blue{2},\red{4}]) \rightarrow ([1,3,\blue{2},\red{4},\blue{2}],[2,1,3],[3,\red{4}]) \rightarrow ([1,3,2,3,2],[2,1,3]).$$
	In the third case, there are 8 choices for signs.
	This gives 8 ribbon codes.

	In total, we have 16 ribbon codes of Structure 4.

	\textbf{(Structure 5)} Suppose $\Gamma$ is an irreducible ribbon code with Structure 5.
	Let the tuple $(\mu_1,\mu_2,\mu_3,\mu_4)$ denote the edge markings, with $\mu_i$ adjacent to $v_i$.
	Note that the $|\mu_i|$ must be distinct.
	We must have $|\mu_4|\in\{1,2\}$, and we can assume that $\mu_4$ is positive.
	
	\emph{Case 5.1:} Assume $\mu_4=1$.
	We must have $|\mu_3|=2$.
	If $|\mu_1|=4$, then the code is subject to a leaf isotopy at $v_4$ that reduces it to a ribbon code of Structure 3 and need not be counted:
	$$([\blue{1},\red{4},2],[2,3,2,3],[3,\blue{1},\red{4}]) \rightarrow ([\blue{1},\red{4},\blue{1},2],[2,3,2,3],[3,\red{4}]) \rightarrow ([1,3,1,2],[2,3,2,3]).$$
	If $|\mu_1|=3$, then the code is subject to a leaf isotopy at $v_1$ that reduces it to a ribbon code of Structure 3 and need not be counted:
	$$[\red{1},\blue{3},2],[2,4,2,3],[\blue{3},\red{1},4]) \rightarrow ([\red{1},2],[2,4,2,3],[\blue{3},\red{1},\blue{3},4]) \rightarrow ([1,3,1,2],[2,1,2,3]).$$
	
	\emph{Case 5.2:} Assume $\mu_4=2$.
	If $|\mu_1|=4$ or $|\mu_2|=4$, then the code is subject to a leaf isotopy at $v_4$ that reduces it to a ribbon code of Structure 3 or Structure 2, respectively, and need not be counted:
	$$([1,\red{4},\blue{2}],[2,3,1,3 ],[3,\blue{2},\red{4}]) \rightarrow ([1,\blue{2},\red{4},\blue{2}],[2,3,1,3],[3,\red{4}]) \rightarrow ([1,2,3,2],[2,3,1,3])$$
	or
	$$([1,3,2],[\blue{2},\red{4},1,3],[3,\blue{2},\red{4}]) \rightarrow ([1,3,2],[\blue{2},\red{4},\blue{2},1,3],[3,
	\red{4}]) \rightarrow ([1,3,2],[2,3,2,1,3]).$$
	Thus, we can assume that $|\mu_3|=4$, which implies that $|\mu_2|=1$ and $|\mu_1|=3$.
	There are 8 ribbon codes of this type.
	
	In total, we have 8 ribbon codes of Structure 5.

	\textbf{(Structure 6)} Suppose $\Gamma$ is an irreducible ribbon code with Structure 6.
	Let the tuple $(\mu_1,\mu_2,\mu_3,\mu_4)$ denote the edge markings, with $\mu_i$ adjacent to $v_i$.  First, we consider whether or not some edge marking is $\pm4$.
	
	\emph{Case 6.1:} If $\pm4$ appears as an edge marking, then we must have $|\mu_1|=4$, and we can assume that $\mu_1$ is positive.
	We consider whether or not $|\mu_4|=1$.
	
	\emph{Case 6.1.1:} Suppose $|\mu_4|=1$.
	This implies that $|\mu_2|=3$ and $|\mu_3|=2$.
	In this case, the ribbon code is subject to a leaf isotopy at $v_2$ (or at $v_3$) that reduces it to a ribbon code of Structure 3 and need not be counted:
	$$([1,4,1,4],[4,\red{3},\blue{2}],[4,\blue{2},\red{3}]) \rightarrow ([1,4,1,4],[4,\blue{2},\red{3},\blue{2}],[4,\red{3}]) \rightarrow ([1,3,1,3],[3,2,3,2]).$$
	
	\emph{Case 6.1.2:} Suppose $|\mu_4|\not=1$.
	This implies that $|\mu_4|=2$ or $|\mu_4|=3$.
	After potentially permuting the vertex labels, we can assume $|\mu_4|=2$. 
	This implies that $(|\mu_4|,|\mu_2|,|\mu_3|) = (2,3,1)$, yielding 8 ribbon codes.
	
	\emph{Case 6.2:} If $\pm4$ does not appear as an edge marking, then there are two instances of some edge marking (up to sign).
	We consider which edge marking occurs twice (up to sign).
	
	\emph{Case 6.2.1:} Suppose there are two instances of $\pm 1$.
	If $|\mu_2|=|\mu_3| = 1$, the the ribbon code is subject to a leaf isotopy at the vertex $v_{|\mu_1|}$ that reduces it to a ribbon code of Structure~2 and need not be counted: 
	$$([\blue{1},\red{2},3,4],[4,\blue{1},\red{2}],[4,1,3]) \rightarrow ([\blue{1},\red{2},\blue{1},3,4],[4,\red{2}],[4,1,3] \rightarrow ([1,2,1,3,2],[2,1,3]).$$
	If only one of $|\mu_2|$ and $|\mu_3|$ is 1, then up to relabeling the vertices, $(|\mu_1|,|\mu_2|,|\mu_3|,|\mu_4|) = (3,1,2,1)$, and there are 8 possible ribbon codes.
	
	\emph{Case 6.2.2:} Suppose there are two instances of $\pm2$.  The possibilities for $(|\mu_1|,|\mu_2|,|\mu_3|,|\mu_4|)$ are: $(2,3,1,2)$, $(2,1,2,3)$, $(2,3,2,1)$, and $(3,1,2,2)$.
	In the second case, the ribbon code admits a leaf isotopy at $v_1$ that transforms it into a ribbon code of the same structure that, after relabeling the vertices falls in Case 6.2.1 and need not be counted: 
	$$([\red{1},\blue{2},3,4],[4,\red{1},\blue{2}],[4,2,3]) \rightarrow ([\red{1},3,4],[4,\blue{2},\red{1},\blue{2}],[4,2,3]).$$
	In the third case, the ribbon code admits a leaf isotopy at $v_2$ that reduces it to a ribbon code of Structure~3 and need not be counted:
	$$([1,2,1,4],[4,\red{3},\blue{2}],[4,\blue{2},\red{3}]) \rightarrow ([1,2,1,4],[4,\blue{2},\red{3},\blue{2}],[4,\red{3}]) \rightarrow ([1,2,1,3],[3,2,3,2]).$$
	In the first case, $\mu_1$ and $\mu_4$ must have the same sign, which can be assumed to be positive after potentially negating all edge labels, so there are 4 ribbon codes of this sort.
	In the fourth case, there are 8 ribbon codes.
		
	\emph{Case 6.2.3:} If there are two instances of $\pm3$, then we can relabel the vertices so that there are two instances of $\pm2$, which we have already counted.
	
	In total, we have 28 ribbon codes of Structure 6.

	\textbf{(Structure 7)} Suppose $\Gamma$ is an irreducible ribbon code with Structure 7.
	Let the tuple $(\mu_1,\mu_2,\mu_3,\mu_4)$ denote the edge markings, with $\mu_i$ adjacent to $v_i$.
	Note that the $|\mu_i|$ must be distinct and not 5.
	We must have $|\mu_1|\in\{3,4\}$, and after potentially permuting and negating the vertex labels, we can assume that $\mu_1=4$.
	We consider whether or not $|\mu_4|=1$.
	
	\emph{Case 7.1:} Assume $|\mu_4|=1$.
	Then, the ribbon code is subject to a leaf isotopy at $v_1$ (or at $v_4$) that transforms it into a ribbon code of Structure 6 (or of Structure 4), and it need not be counted:
{\small{	$$([\blue{1},\red{4},2],[2,3,5],[5,2,3],[5,\blue{1},\red{4}]) \rightarrow ([\blue{1},\red{4},\blue{1},2],[2,3,5],[5,2,3],[5,\red{4}]) \rightarrow ([1,4,1,2],[2,3,4],[4,2,3]).$$}}


	\emph{Case 7.2:} Assume $|\mu_4|=2$.
	Then, $(|\mu_2|, |\mu_3|)$ is $(3,1)$.
	In this case, the ribbon code is subject to a leaf isotopy at $v_4$ that transforms it into a ribbon code of Structure 4 and need not be counted.
{\small{	$$[1,\red{4},\blue{2}],[2,3,5],[5,1,3],[5,\blue{2},\red{4}] \rightarrow ([1,\blue{2},\red{4},\blue{2}],[2,3,5],[5,1,3],[5,\red{4}]) \rightarrow ([1,2,4,2],[2,3,4],[4,1,3]).$$}}
	
	\emph{Case 7.2:} Assume $|\mu_4|=3$.
	Then, $(|\mu_2|, |\mu_3|)$ is $(1,2)$.
	In this case, there are eight choices for signs, yielding 8 ribbon codes.
	
	In total, there are 8 ribbon codes of Structure 7.
	
	\textbf{(Structure 8)} Suppose $\Gamma$ is an irreducible ribbon code with Structure 8.
	Let the tuple $(\mu_1,\mu_2,\mu_3,\mu_4)$ denote the edge markings, with each $\mu_i$ adjacent to $v_i$.
	Note that the $|\mu_i|$ must be distinct and not 5.
	After potentially permuting and negating the vertex labels, we can assume that $\mu_1=2$.
	We consider whether or not $|\mu_2|=1$.
	
	\emph{Case 8.1:} Suppose that $|\mu_2|=1$.
	Then, $|\mu_3|=4$ and $|\mu_4|=3$, and the code is decomposable at $v_5$ and need not be counted:
	$$([1,2,\orange{5}],[2,1,\orange{5}],[3,4,\orange{5}],[4,3,\orange{5}])\rightarrow ([1,2,\orange{5}],[2,1,\orange{5}])\#([3,4,\orange{5}],[4,3,\orange{5}]).$$
	
	\emph{Case 8.2:} Suppose that $|\mu_2|\not=1$.
	After potentially permuting the vertex labels, we can assume $(|\mu_2|,|\mu_3|,|\mu_4|) = (3,4,1)$.
	After potentially permuting the vertex labels and negating all edge markings, we can assume that at most two of the $\mu_i$ are negative; in particular, we have only four options for $(\mu_1,\mu_2,\mu_3,\mu_4)$: $(2,3,4,\pm1)$, $(2,3,-4,-1)$, or $(2,-3,4,-1)$.
	This gives 4 ribbon codes.

	In total, there are 4 ribbon codes of Structure 8.
\end{proof}

\section{Tabulation}\label{sec:tab}

In this section, we tabulate ribbon numbers for prime 12-crossing knots.  Our upper bounds come primarily from an application of Lemma~\ref{lem:symm} to the symmetric union presentations appearing in~\cite{lamm}, which we reference in the table as Lemma~\ref{lem:symm}.  When we do not use symmetric union presentations, we will refer to the ribbon disks shown in Figures~\ref{fig:ribbonA} and~\ref{fig:ribbonB} below or to a specific figure in a reference.  Knots in these figures were verified by SnapPy~\cite{snappy}.  Lower bounds are derived primarily from the genus obstruction in Lemma~\ref{lem:genus} and the Alexander polynomial obstructions in Propositions~\ref{prop:r2r3} and~\ref{prop:R4}.

We will also cite a proposition due to Mizuma and Tsutsumi, which uses the \emph{crosscap number} $\gamma(K)$ of a knot $K$, the minimal non-orientable genus of any surface bounded by $K$.

\begin{proposition}\cite{MT}\label{prop:cross}
Suppose $r(K) = 2$.  Then either $g(K) = 1$ or $\gamma(K) \leq 2$.
\end{proposition}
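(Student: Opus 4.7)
The plan is to combine Lemma~\ref{lem:genus} with the very restrictive structural classification of ribbon disks realizing $r(K) = 2$, and then to produce a small non-orientable spanning surface by local surgery at each ribbon intersection. First, Lemma~\ref{lem:genus} gives $g(K) \leq r(K) = 2$. Since no nontrivial knot has ribbon number one (as noted just before Proposition~\ref{prop:r2r3}), the hypothesis $r(K) = 2$ forces $K$ to be nontrivial, so $g(K) \geq 1$ and thus $g(K) \in \{1,2\}$. If $g(K) = 1$ the conclusion holds immediately, and the problem reduces to producing a non-orientable spanning surface for $K$ of crosscap number at most two when $g(K) = 2$.

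Second, I would appeal to the classification of irreducible, indecomposable ribbon codes of ribbon number two: by the same kind of case analysis carried out in Lemmas~\ref{lem:structures} and~\ref{lem:4codes} (but in a much smaller tree), there are only two such codes up to equivalence, corresponding precisely to the two members of $\Rr_2$ in Proposition~\ref{prop:r2r3}; one has a single edge with two markings joining two vertices, and the other a path of three vertices whose two edges each carry a single marking. Decomposable codes need not be considered separately because $r = 1$ is impossible for nontrivial summands. Thus $K$ bounds a ribbon disk $\D$ whose disk-band presentation is one of these two explicit forms.

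Third, in each case I would construct a non-orientable surface $F$ bounded by $K$ by performing a local crosscap replacement at each of the two ribbon singularities: where a band passes through a disk along an arc $\alpha$, excise a small rectangular neighborhood of $\alpha$ from the disk and reattach it with a half-twist, converting the ribbon singularity into a crosscap. The Euler characteristic bookkeeping gives $\chi(F) = \chi(\D) - 2 = -1$, so $F$ is a once-punctured non-orientable surface of first Betti number two, and provided $F$ is genuinely non-orientable we obtain $\gamma(K) \leq 2$. The hard part will be this orientability check: the local replacement may, depending on the signs of the markings in the ribbon code, produce an orientable surface with a handle rather than a crosscap. One handles this by observing that in the sign patterns where the naive replacement fails, the code itself forces either a connected-sum decomposition (so that $\gamma(K) \leq 2$ follows from subadditivity applied to summands with $\gamma \leq 1$) or an alternate framing at one band that restores non-orientability after a half-twist isotopy; the case $g(K) = 2$ then rules out the remaining possibility that the construction only ever yields a higher-genus Seifert surface, completing the argument.
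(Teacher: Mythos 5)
There is a genuine gap, and it is in the surgery step. You resolve each of the two ribbon singularities by a separate local modification and claim the total cost is $\chi(F)=\chi(\D)-2=-1$, i.e.\ $-1$ per singularity. But a single ribbon intersection cannot be resolved by a local $\chi-1$ move: the boundary of the rectangular neighborhood of the arc $\alpha$ that you excise from the disk meets the band in one point (mod~2), so no embedded surface spanning that circle can avoid the band locally --- ``reattaching with a half-twist'' still produces a sheet that the band must puncture. The honest per-singularity resolution is the cellar door trick, which (as recorded in Remark~\ref{rmk:spectrum} and Figure~\ref{fig:cellar_door}) costs $\chi-2$ \emph{each}; applying it twice yields an embedded surface with $\chi=1-4=-3$, giving only $g(K)\leq 2$ or $\gamma(K)\leq 4$, which is not the statement. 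The paper's proof avoids this by first invoking Lemma~2.5 of~\cite{FMZ} to pass to a disk-band presentation with fusion number one, so that \emph{both} ribbon intersections lie on a single band; it then deletes disk neighborhoods of the two intersections and joins them by one tube running along the band between them. Removing two intersections at the price of a single tube is what achieves $\chi(S)=-1$, and the dichotomy is then immediate: if $S$ is orientable it is a genus-one Seifert surface, and if not it has crosscap number two. Your classification of ribbon codes with $r=2$ is not needed for this.

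A secondary point: your framing of the orientability issue as ``the hard part'' inverts the logic. You do not need to force non-orientability in the $g(K)=2$ case; with the correct $\chi=-1$ surface in hand, orientability of $S$ simply lands you in the other branch of the conclusion ($g(K)=1$), so whichever way the construction comes out, the proposition holds. The sign-dependence of which branch occurs is exactly what the paper exploits afterwards in Lemma~\ref{lem:special}.
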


\begin{proof}
Suppose $\D$ is a ribbon disk for $K$ such that $r(\D)  = 2$.  By Lemma 2.5 of~\cite{FMZ}, there exists a disk-band presentation $(D,B)$ for $\D$ such that $\F(D,B) = 1$, and we can construct an embedded surface $S$ bounded by $K$ by removing two disk neighborhoods of the ribbon intersections of $\D$ and connecting their boundaries with a tube that encloses a section of the band.  Either $S$ is orientable, in which case $g(S) = 1$, or $S$ is non-orientable, in which case $\gamma(S) = 2$.
\end{proof}

The crosscap number also features in Section~\ref{subsec:pretzel}.
We can improve this result slightly:

\begin{lemma}\label{lem:special}
Suppose $r(K) = 2$ and $\Delta_K(t) = 2-5t+2t^2$.  Then $g(K) = 1$.
\end{lemma}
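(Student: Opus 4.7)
The plan is to refine Proposition~\ref{prop:cross} by pinning down the orientability of the surgered surface it produces. By Lemma 2.5 of~\cite{FMZ}, the hypothesis $r(K) = 2$ furnishes a disk-band presentation $(D,B)$ of a ribbon disk for $K$ with $\F(D,B) = 1$, so $\D$ is built from two disks $D_1, D_2$ joined by a single band $B$ carrying exactly two ribbon intersections. Performing the tubing surgery from the proof of Proposition~\ref{prop:cross} yields an embedded surface $S$ bounded by $K$ with $\chi(S) = -1$, so $S$ is either an orientable once-punctured torus (forcing $g(K) \le 1$) or a once-punctured Klein bottle. My goal is to rule out the latter possibility under the hypothesis $\Delta_K(t) = 2-5t+2t^2$.

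First I would identify the ribbon code. The ribbon code $\Gamma$ of $(D,B)$ has a single edge carrying two markings $(\mu_1, \mu_2)$; irreducibility forces $|\mu_1|=2$ and $|\mu_2|=1$, and Lemma~\ref{lem:negate} reduces the classification to two cases according to whether the signs $\epsilon_1, \epsilon_2$ agree or disagree. A direct Fox calculus computation on the one-relator presentation $\langle x_1, x_2 \mid x_2 = w\, x_1\, w^{-1}\rangle$ read off from $\Gamma$ would show that the same-sign case yields half-polynomial $1-t+t^2$ and hence $\Delta_K(t) \doteq 1-2t+3t^2-2t^3+t^4$, while the opposite-sign case yields half-polynomial $-1+2t$ and hence $\Delta_K(t) \doteq 2-5t+2t^2$. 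The hypothesis on $\Delta_K$ therefore pins the sign pattern to $\epsilon_1 = -\epsilon_2$.

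The final and most delicate step would be to verify that the opposite-sign pattern forces $S$ to be orientable. The adjoined tube runs along $B$ between the two ribbon arcs, and after fixing an orientation on $B$ each attaching circle of the tube inherits an orientation. By definition, the marking sign $\epsilon_k$ records whether the normal orientation of the pierced disk $D_{|\mu_k|}$ agrees with the local direction along $B$; with $\epsilon_1 = -\epsilon_2$ the two comparisons have opposite outcomes, so the tube's two attaching circles inherit orientations compatible with those of $D_1$ and $D_2$, making $S$ orientable, whereas the same-sign case would force an orientation reversal along the tube and produce a crosscap. Once $S$ is orientable we obtain $g(K) \le g(S) = 1$, which together with $g(K) \ge \tfrac{1}{2}\deg \Delta_K = 1$ gives $g(K) = 1$. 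The hard part is this final orientability check: the Fox computation is routine, but translating the combinatorial sign data of the ribbon code into the local framing comparison used to orient the tube requires a careful local picture at each ribbon intersection and a verification that these local comparisons assemble into a global orientation on~$S$.
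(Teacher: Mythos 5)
Your strategy is the same as the paper's: the proof of Lemma~\ref{lem:special} amounts to observing that the hypotheses force the ribbon code to be $([1,2,1,2])$ and that for this particular code the surgered surface from the proof of Proposition~\ref{prop:cross} is orientable. However, your two intermediate steps are each stated backwards, and the proof only reaches the right conclusion because the two reversals cancel. The first problem is the Fox-calculus assignment. You treat the marking signs $\epsilon_1,\epsilon_2$ as if they were the exponents appearing in the word $w$, but under the definition you quote (a marking is positive when the \emph{local direction} agrees with the normal of the pierced disk), these differ at the marking adjacent to $v_2$: there the local direction points \emph{backward} along the band, so a marking $+1$ contributes $x_1^{-1}$ to $w$, not $x_1$. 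Hence the same-sign code $([1,2,1,2])$ gives $w=x_2x_1^{-1}$, half-polynomial $2-t$ (equivalently $-1+2t$), and $\Delta\doteq 2-5t+2t^2$, while the opposite-sign code $([1,2,-1,2])$ gives $w=x_2x_1$ and $(1-t+t^2)^2$. You can check this against Table~\ref{table:rc1-2}: the entry $([1,2,1,2,1,2])\mapsto 6-13t+6t^2$ is only consistent with $w=x_2x_1^{-1}x_2x_1^{-1}$. So the hypothesis pins the code to the \emph{same}-sign case, exactly as the paper's proof asserts.

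The orientability criterion in your last paragraph is then also reversed, for the same underlying reason: the tube attaches at the hole in $D_2$ on the side toward which the band \emph{exits}, but at the hole in $D_1$ on the side from which the band \emph{enters} (again because the local direction at the $v_2$-adjacent marking points backward), so it is the same-sign code whose two attachments induce compatible orientations, and the opposite-sign code yields a non-orientable surface. A sanity check that would have caught this: the opposite-sign code is realized by the standard two-singularity ribbon disk for the square knot $3_1\#\overline{3_1}$, which has genus $2$ and therefore cannot bound a once-punctured torus, so its $\chi=-1$ tubed surface must be non-orientable --- contradicting your claim that the opposite-sign case is the orientable one. Both of your claims cannot be derived from the single definition of marking sign that you quote; as written the argument is internally inconsistent, and the local orientation analysis that you correctly identify as the crux is asserted rather than carried out.
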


\begin{proof}
In this case, there is a ribbon disk $\D$ for $K$ that corresponds to ribbon code $[(1,2,1,2)]$, and the surface $S$ constructed in the proof of Proposition~\ref{prop:cross} is orientable.
\end{proof}

Ribbon numbers for prime, non-alternating 12-crossing knots can be found in Table~\ref{table:12n}, and ribbon numbers for prime, alternating 12-crossing knots can be found in Table~\ref{table:12a}.
Determinants, Alexander polynomials, and knot genera included in the tables were obtained from KnotInfo~\cite{KnotInfo}.

\begin{table}[!ht]
    \centering
	\resizebox*{!}{.95\dimexpr\textheight-1\baselineskip\relax}{%
    \begin{tabular}{|l|l|l|l|l|l|l|}
    \hline
        $K$ & $\text{det}(K)$ & $\Delta_K$ & $g(K)$ & $r(K)$ & lower & upper \\ \hline
 $12n_4$ & 81 & $1-7t+19t^2-27t^3+19t^4-7t^5+t^6$ & 3 & 4 & Prop.~\ref{prop:r2r3} & Fig.~\ref{fig:ribbonA} \\ \hline
 $12n_{19}$ & 1 & $1-3t-t^2+7t^3-t^4-3t^5+t^6$ & 3 & 4 & Prop.~\ref{prop:r2r3} & Lem.~\ref{lem:symm} \\ \hline
 $12n_{23}$ & 9 & $2-5t+2t^2$ & 2 & 3, 4 & Lem.~\ref{lem:special} & Fig.~\ref{fig:ribbonA} \\ \hline
 $12n_{24}$ & 49 & $1-5t+11t^2-15t^3+11t^4-5t^5+t^6$ & 3 & 3 & Lem.~\ref{lem:genus} & Fig.~\ref{fig:ribbonA} \\ \hline
 $12n_{43}$ & 81 & $1-5t+10t^2-15t^3+19t^4-15t^5+10t^6-5t^7+t^8$ & 4 & 4 & Lem.~\ref{lem:genus} & Fig.~\ref{fig:ribbonA} \\ \hline
 $12n_{48}$ & 49 & $2-12t+21t^2-12t^3+2t^4$ & 2 & 4 & Prop.~\ref{prop:r2r3} & Lem.~\ref{lem:symm} \\ \hline
 $12n_{49}$ & 81 & $4-20t+33t^2-20t^3+4t^4$ & 2 & 4 & Prop.~\ref{prop:r2r3} & Fig.~\ref{fig:ribbonA} \\ \hline
 $12n_{51}$ & 9 & $2-5t+2t^2$ & 2 & 3 & Lem.~\ref{lem:special} & \cite[Fig. 5]{lamm} \\ \hline
 $12n_{56}$ & 9 & $1-2t+3t^2-2t^3+t^4$ & 3 & 3 & Lem.~\ref{lem:genus} & \cite[Fig. 5]{lamm} \\ \hline
 $12n_{57}$ & 9 & $1-2t+3t^2-2t^3+t^4$ & 2 & 3 & Prop.~\ref{prop:cross} & \cite[Fig. 5]{lamm} \\ \hline
 $12n_{62}$ & 81 & $2-9t+18t^2-23t^3+18t^4-9t^5+2t^6$ & 3 & 4 & Prop.~\ref{prop:r2r3} & \cite[Fig. 7]{lamm} \\ \hline
 $12n_{66}$ & 81 & $2-9t+18t^2-23t^3+18t^4-9t^5+2t^6$ & 3 & 4 & Prop.~\ref{prop:r2r3} & \cite[Fig. 7]{lamm} \\ \hline
 $12n_{87}$ & 49 & $4-12t+17t^2-12t^3+4t^4$ & 3 & 4 & Prop.~\ref{prop:r2r3} & Lem.~\ref{lem:symm} \\ \hline
 $12n_{106}$ & 81 & $1-4t+10t^2-16t^3+19t^4-16t^5+10t^6-4t^7+t^8$ & 4 & 4 & Lem.~\ref{lem:genus} & Fig.~\ref{fig:ribbonA} \\ \hline
 $12n_{145}$ & 25 & $1-6t+11t^2-6t^3+t^4$ & 2 & 3 & Prop.~\ref{prop:r2r3} & Lem.~\ref{lem:symm} \\ \hline
 $12n_{170}$ & 81 & $6-20t+29t^2-20t^3+6t^4$ & 2 & 4 & Prop.~\ref{prop:r2r3} & Fig.~\ref{fig:ribbonA} \\ \hline
 $12n_{214}$ & 1 & $1-t-t^2+3t^3-t^4-t^5+t^6$ & 3 & 3 & Lem.~\ref{lem:genus} & Lem.~\ref{lem:symm} \\ \hline
 $12n_{256}$ & 25 & $2-6t+9t^2-6t^3+2t^4$ & 3 & 3, 4 & Lem.~\ref{lem:genus} & Lem.~\ref{lem:symm} \\ \hline
 $12n_{257}$ & 25 & $2-6t+9t^2-6t^3+2t^4$ & 3 & 3 & Lem.~\ref{lem:genus} & Fig.~\ref{fig:ribbonA} \\ \hline
 $12n_{268}$ & 9 & $2-5t+2t^2$ & 2 & 3 & Lem.~\ref{lem:special} & \cite[Fig. 5]{lamm} \\ \hline
 $12n_{279}$ & 25 & $1-6t+11t^2-6t^3+t^4$  & 2 & 3 & Prop.~\ref{prop:r2r3} & Fig.~\ref{fig:ribbonA} \\ \hline
 $12n_{288}$ & 49 & $4-12t+17t^2-12t^3+4t^4$ & 2 & 4 & Prop.~\ref{prop:r2r3} & Lem.~\ref{lem:symm} \\ \hline
 $12n_{309}$ & 1 & $1-t-t^2+3t^3-t^4-t^5+t^6$ & 3 & 3 & Lem.~\ref{lem:genus} & Lem.~\ref{lem:symm} \\ \hline
 $12n_{312}$ & 49 & $1-5t+11t^2-15t^3+11t^4-5t^5+t^6$ & 3 & 3 & Lem.~\ref{lem:genus} & Fig.~\ref{fig:ribbonA} \\ \hline
 $12n_{313}$ & 1 & $1$ & 2 & 3 & Prop.~\ref{prop:cross} & Lem.~\ref{lem:symm} \\ \hline
 $12n_{318}$ & 1 & $1-t-t^2+3t^3-t^4-t^5+t^6$ & 3 & 3 & Lem.~\ref{lem:genus} & Lem.~\ref{lem:symm} \\ \hline
 $12n_{360}$ & 49 & $3-12t+19t^2-12t^3+3t^4$ & 2 & 3 & Prop.~\ref{prop:r2r3} & Fig.~\ref{fig:ribbonB} \\ \hline
 $12n_{380}$ & 81 & $2-9t+18t^2-23t^3+18t^4-9t^5+2t^6$ & 3 & 4 & Prop.~\ref{prop:r2r3} & Fig.~\ref{fig:ribbonB} \\ \hline
 $12n_{393}$ & 49 & $3-12t+19t^2-12t^3+3t^4$ & 2 & 3 & Prop.~\ref{prop:r2r3} & Fig.~\ref{fig:ribbonB} \\ \hline
 $12n_{394}$ & 25 & $1-6t+11t^2-6t^3+t^4$ & 2 & 3 & Prop.~\ref{prop:r2r3} & Fig.~\ref{fig:ribbonA} \\ \hline
 $12n_{397}$ & 49 & $1-5t+11t^2-15t^3+11t^4-5t^5+t^6$ & 3 & 3 & Lem.~\ref{lem:genus} & Fig.~\ref{fig:ribbonB} \\ \hline
 $12n_{399}$ & 81 & $1-7t+19t^2-27t^3+19t^4-7t^5+t^6$ & 3 & 4 & Prop.~\ref{prop:r2r3} & Fig.~\ref{fig:ribbonB} \\ \hline
 $12n_{414}$ & 25 & $2-6t+9t^2-6t^3+2t^4$ & 2 & 3 & Prop.~\ref{prop:r2r3} & Fig.~\ref{fig:ribbonB} \\ \hline
 $12n_{420}$ & 81 & $1-7t+19t^2-27t^3+19t^4-7t^5+t^6$ & 3 & 4 & Prop.~\ref{prop:r2r3} & Lem.~\ref{lem:symm} \\ \hline
 $12n_{430}$ & 1 & $1$ & 2 & 3 & Prop.~\ref{prop:cross} & Lem.~\ref{lem:symm} \\ \hline
 $12n_{440}$ & 81 & $2-9t+18t^2-23t^3+18t^4-9t^5+2t^6$ & 3 & 4 & Prop.~\ref{prop:r2r3} & \cite[Fig. 7]{lamm} \\ \hline
 $12n_{462}$ & 25 & $1-6t+11t^2-6t^3+t^4$ & 2 & 3 & Prop.~\ref{prop:r2r3} & Lem.~\ref{lem:symm} \\ \hline
 $12n_{501}$ & 49 & $4-12t+17t^2-12t^3+4t^4$ & 2 & 3 & Prop.~\ref{prop:r2r3} & Lem.~\ref{lem:symm} \\ \hline
 $12n_{504}$ & 121 & $1-6t+15t^2-24t^3+29t^4-24t^5+15t^6-6t^7+t^8$ & 4 & 4 & Lem.~\ref{lem:genus} &  Fig.~\ref{fig:ribbonB} \\ \hline
 $12n_{553}$ & 81 & $4-20t+33t^2-20t^3+4t^4$ & 2 & 4 & Prop.~\ref{prop:r2r3} &  Fig.~\ref{fig:ribbonB} \\ \hline
 $12n_{556}$ & 81 & $4-20t+33t^2-20t^3+4t^4$ & 2 & 4 & Prop.~\ref{prop:r2r3} &  Fig.~\ref{fig:ribbonB} \\ \hline
 $12n_{582}$ & 9 & $1-2t+3t^2-2t^3+t^4$ & 2 & 2 & Lem.~\ref{lem:genus} &  Lem.~\ref{lem:symm} \\ \hline
 $12n_{605}$ & 9 & $1-2t+4t^3-7t^4+4t^5-2t^7+t^8$ & 4 & 4 & Lem.~\ref{lem:genus} &  Lem.~\ref{lem:symm} \\ \hline
 $12n_{636}$ & 81 & $1-7t+19t^2-27t^3+19t^4-7t^5+t^6$ & 3 & 4 & Prop.~\ref{prop:r2r3} & Fig.~\ref{fig:ribbonB}  \\ \hline
 $12n_{657}$ & 81 & $1-4t+9t^2-16t^3+21t^4-16t^5+9t^6-4t^7+t^8$ & 4 & 4 & Lem.~\ref{lem:genus} & Fig.~\ref{fig:ribbonB}  \\ \hline
 $12n_{670}$ & 25 & $1-2t+3t^2-4t^3+5t^4-4t^5+3t^6-2t^7+t^8$ & 4 & 4 & Lem.~\ref{lem:genus} & Lem.~\ref{lem:symm}  \\ \hline
 $12n_{676}$ & 9 & $2-2t-4t^2+9t^3-4t^4-2t^5+2t^6$ & 3 & 4 & Prop.~\ref{prop:r2r3} & Fig.~\ref{fig:ribbonB}  \\ \hline
 $12n_{702}$ & 121 & $2-12t+28t^2-37t^3+28t^4-12t^5+2t^6$ & 3 & 4, 5 & Prop.~\ref{prop:r2r3} & Lem.~\ref{lem:symm}  \\ \hline
 $12n_{706}$ & 49 & $1-4t+6t^2-8t^3+11t^4-8t^5+6t^6-4t^7+t^8$ & 4 & 4 & Lem.~\ref{lem:genus} & Fig.~\ref{fig:ribbonB}  \\ \hline
 $12n_{708}$ & 49 & $1-3t+6t^2-9t^3+11t^4-9t^5+6t^6-3t^7+t^8$ & 4 & 4 & Lem.~\ref{lem:genus} & Lem.~\ref{lem:symm} \\ \hline
 $12n_{721}$ & 25 & $1-2t+3t^2-4t^3+5t^4-4t^5+3t^6-2t^7+t^8$ & 4 & 4 & Lem.~\ref{lem:genus} & Lem.~\ref{lem:symm} \\ \hline
 $12n_{768}$ & 25 & $1-3t+5t^2-7t^3+5t^4-3t^5+t^6$ & 3 & 3 & Lem.~\ref{lem:genus} & Lem.~\ref{lem:symm} \\ \hline
 $12n_{782}$ & 81 & $2-8t+18t^2-25t^3+18t^4-8t^5+2t^6$ & 3 & 4, 5 & Prop.~\ref{prop:r2r3} & Lem.~\ref{lem:symm} \\ \hline
 $12n_{802}$ & 121 & $1-5t+14t^2-25t^3+31t^4-25t^5+14t^6-5t^7+t^8$ & 4 & 4 & Lem.~\ref{lem:genus} & Fig.~\ref{fig:ribbonB} \\ \hline
 $12n_{817}$ & 49 & $2-6t+10t^2-13t^3+10t^4-6t^5+2t^6$ & 3 & 4 & Prop.~\ref{prop:r2r3} & Lem.~\ref{lem:symm} \\ \hline
 $12n_{838}$ & 25 & $1-6t+11t^2-6t^3+t^4$ & 2 & 3 & Prop.~\ref{prop:r2r3} & Lem.~\ref{lem:symm} \\ \hline
 $12n_{870}$ & 25 & $1-3t+5t^2-7t^3+5t^4-3t^5+t^6$ & 3 & 3 & Lem.~\ref{lem:genus} & Fig.~\ref{fig:ribbonA} \\ \hline
 $12n_{876}$ & 81 & $2-8t+18t^2-25t^3+18t^4-8t^5+2t^6$ & 3 & 4 & Prop.~\ref{prop:R4} & Fig.~\ref{fig:ribbonB} \\ \hline

    \end{tabular}}
	\caption{Ribbon numbers of prime, non-alternating 12-crossing knots}
	\label{table:12n}
\end{table}

\begin{table}[!ht]
    \centering
	\resizebox*{!}{.95\dimexpr\textheight-1\baselineskip\relax}{%
    \begin{tabular}{|l|l|l|l|l|l|l|}
    \hline
        $K$ & $\text{det}(K)$ & $\Delta_K$ & $g(K)$ & $r(K)$ & lower & upper \\ \hline
 $12a_3$ & 169 & $2-14t+40t^2-57t^3+40t^4-14t^5+2t^6$ & 3 & $[4,6]$ & Prop.~\ref{prop:r2r3} & Lem.~\ref{lem:symm} \\ \hline
 $12a_{54}$ & 169 & $3-17t+39t^2-51t^3+39t^4-17t^5+3t^6)$ & 3 & $[4,5]$ & Prop.~\ref{prop:r2r3} & Lem.~\ref{lem:symm} \\ \hline
 $12a_{77}$ & 225 & $1-7t+24t^2-49t^3+63t^4-49t^5+24t^6-7t^7+t^8$ & 4 & $[4,7]$ & Lem.~\ref{lem:genus} & Lem.~\ref{lem:symm} \\ \hline
 $12a_{100}$ & 225 & $3-21t+53t^2-71t^3+53t^4-21t^5+3t^6$ & 3 & $[4,7]$ & Prop.~\ref{prop:r2r3} & Lem.~\ref{lem:symm} \\ \hline
 $12a_{173}$ & 169 & $1-7t+20t^2-35t^3+43t^4-35t^5+20t^6-7t^7+t^8$ & 4 & $[4,6]$ & Lem.~\ref{lem:genus} & Lem.~\ref{lem:symm} \\ \hline
 $12a_{183}$ & 121 & $6-30t+49t^2-30t^3+6t^4$ & 2 & 5 & Prop.~\ref{prop:R4} & Lem.~\ref{lem:symm} \\ \hline
 $12a_{189}$ & 225 & $1-8t+26t^2-48t^3+59t^4-48t^5+26t^6-8t^7+t^8$ & 4 & $[4,7]$ & Lem.~\ref{lem:genus} & Lem.~\ref{lem:symm} \\ \hline
 $12a_{211}$ & 169 & $2-8t+20t^2-34t^3+41t^4-34t^5+20t^6-8t^7+2*t^8$ & 4 & 5 & Prop.~\ref{prop:R4} & Lem.~\ref{lem:symm} \\ \hline
 $12a_{221}$ & 169 & $2-14t+40t^2-57t^3+40t^4-14t^5+2t^6$ & 3 & $[4,5]$ & Prop.~\ref{prop:r2r3} & Lem.~\ref{lem:symm} \\ \hline
 $12a_{245}$ & 225 & $1-7t+24t^2-49t^3+63t^4-49t^5+24t^6-7t^7+t^8$ & 4 & $[4,7]$ & Prop.~\ref{prop:r2r3} & Lem.~\ref{lem:symm} \\ \hline
 $12a_{258}$ & 169 & $1-7t+20t^2-35t^3+43t^4-35t^5+20t^6-7t^7+t^8$ & 4 & $[4,5]$ & Lem.~\ref{lem:genus} & Lem.~\ref{lem:symm} \\ \hline
 $12a_{279}$ & 169 & $2-15t+40t^2-55t^3+40t^4-15t^5+2t^6$  & 3 & $[5,6]$ & Prop.~\ref{prop:R4} & Lem.~\ref{lem:symm} \\ \hline
 $12a_{348}$ & 225 & $2-17t+54t^2-79t^3+54t^4-17t^5+2t^6$ & 3 & 5 & Prop.~\ref{prop:R4} &  \cite[Fig. 8]{lamm} \\ \hline
 $12a_{377}$ & 225 & $1-8t+26t^2-48t^3+59t^4-48t^5+26t^6-8t^7+t^8$ & 4 & $[4,7]$ & Lem.~\ref{lem:genus} & Lem.~\ref{lem:symm} \\ \hline
 $12a_{425}$ & 81 & $6-20t+29t^2-20t^3+6t^4$ & 2 & $[4,5]$ & Prop.~\ref{prop:r2r3} & Lem.~\ref{lem:symm} \\ \hline
 $12a_{427}$ & 225 & $1-8t+26t^2-48t^3+59t^4-48t^5+26t^6-8t^7+t^8$ & 4 & 4 & Lem.~\ref{lem:genus} & Fig.~\ref{fig:ribbonA} \\ \hline
 $12a_{435}$ & 225 & $1-8t+26t^2-48t^3+59t^4-48t^5+26t^6-8t^7+t^8$ & 4 & 4 & Lem.~\ref{lem:genus} & Fig.~\ref{fig:ribbonA} \\ \hline
 $12a_{447}$ & 121 & $2-12t+28t^2-37t^3+28t^4-12t^5+2t^6$ & 3 & 4 & Prop.~\ref{prop:r2r3} & Lem.~\ref{lem:symm} \\ \hline
 $12a_{456}$ & 225 & $1-8t+25t^2-48t^3+61t^4-48t^5+25t^6-8t^7+t^8$ & 4 & $[5,7]$ & Prop.~\ref{prop:R4} & Lem.~\ref{lem:symm} \\ \hline
 $12a_{458}$ & 289 & $1-9t+32t^2-63t^3+79t^4-63t^5+32t^6-9t^7+t^8$ & 4 & $[5,7]$ & Prop.~\ref{prop:R4} & Lem.~\ref{lem:symm} \\ \hline
 $12a_{464}$ & 225 & $1-7t+24t^2-49t^3+63t^4-49t^5+24t^6-7t^7+t^8$ & 4 & 4 & Lem.~\ref{lem:genus} & Fig.~\ref{fig:ribbonA} \\ \hline
 $12a_{473}$ & 289 & $1-8t+30t^2-64t^3+83t^4-64t^5+30t^6-8t^7+t^8$ & 4 & $[5,7]$ & Prop.~\ref{prop:R4} & Lem.~\ref{lem:symm} \\ \hline
 $12a_{477}$ & 169 & $1-11t+41t^2-63t^3+41t^4-11t^5+t^6$ & 3 & $5$ & Prop.~\ref{prop:R4} & Lem.~\ref{lem:symm} \\ \hline
 $12a_{484}$ & 289 & $1-9t+32t^2-63t^3+79t^4-63t^5+32t^6-9t^7+t^8$ & 4 & $[5,6]$ & Prop.~\ref{prop:R4} & Lem.~\ref{lem:symm} \\ \hline
  $12a_{606}$ & 169 & $4-18t+38t^2-49t^3+38t^4-18t^5+4t^6$ & 3 & 5 & Prop.~\ref{prop:R4} & Lem.~\ref{lem:symm} \\ \hline
 $12a_{631}$ & 225 & $4-22t+52t^2-69t^3+52t^4-22t^5+4t^6$ & 4 & $[4,6]$ & Lem.~\ref{lem:genus} & Lem.~\ref{lem:symm} \\ \hline
 $12a_{646}$ & 169 & $2-9t+21t^2-33t^3+39t^4-33t^5+21t^6-9t^7+2t^8$ & 4 & $[5,6]$ & Prop.~\ref{prop:R4} & Lem.~\ref{lem:symm} \\ \hline
 $12a_{667}$ & 121 & $2-7t+15t^2-23t^3+27t^4-23t^5+15t^6-7t^7+2t^8$ & 4 & 5 & Prop.~\ref{prop:R4} & Lem.~\ref{lem:symm} \\ \hline
 $12a_{715}$ & 169 & $4-18t+38t^2-49t^3+38t^4-18t^5+4t^6$ & 3 & $[5,6]$ & Prop.~\ref{prop:R4} & Lem.~\ref{lem:symm} \\ \hline
 $12a_{786}$ & 169 & $2-15t+40t^2-55t^3+40t^4-15t^5+2t^6$ & 3 & $[5,6]$ & Prop.~\ref{prop:R4} & Lem.~\ref{lem:symm} \\ \hline
 $12a_{819}$ & 169 & $1-5t+12t^2-21t^3+29t^4-33t^5+29t^6-21t^7+12t^8-5t^9+t^{10}$ & 5 & 5 & Lem.~\ref{lem:genus} & Lem.~\ref{lem:symm} \\ \hline
 $12a_{879}$ & 121 & $2-7t+15t^2-23t^3+27t^4-23t^5+15t^6-7t^7+2t^8$ & 4 & 5 & Prop.~\ref{prop:R4} & Lem.~\ref{lem:symm} \\ \hline
 $12a_{887}$ & 289 & $1-9t+32t^2-63t^3+79t^4-63t^5+32t^6-9t^7+t^8$ & 4 & $[5,7]$ & Prop.~\ref{prop:R4} & Lem.~\ref{lem:symm} \\ \hline
 $12a_{975}$ & 225 & $4-22t+52t^2-69t^3+52t^4-22t^5+4t^6$ & 3 & 4 & Prop.~\ref{prop:r2r3} & Fig.~\ref{fig:ribbonA} \\ \hline
 $12a_{979}$ & 225 & $2-10t+27t^2-46t^3+55t^4-46t^5+27t^6-10t^7+2t^8$ & 4 & $[5,7]$ & Prop.~\ref{prop:R4} & Lem.~\ref{lem:symm} \\ \hline
 $12a_{990}$ & 225 & $1-8t+26t^2-48t^3+59t^4-48t^5+26t^6-8t^7+t^8$ & 4 & $[4,7]$ & Prop.~\ref{prop:r2r3} & \cite[Fig. 8]{lamm} \\ \hline
 $12a_{1011}$ & 121 & $1-4t+9t^2-15t^3+20t^4-23t^5+20t^6-15t^7+9t^8-4t^9+t^{10}$ & 5 & 5 & Lem.~\ref{lem:genus} &Lem.~\ref{lem:symm} \\ \hline
 $12a_{1019}$ & 361 & $1-10t+39t^2-80t^3+101t^4-80t^5+39t^6-10t^7+t^8$ & 4 & $[5,6]$ & Prop.~\ref{prop:R4} &Lem.~\ref{lem:symm} \\ \hline
 $12a_{1029}$ & 81 & $2-6t+10t^2-14t^3+17t^4-14t^5+10t^6-6t^7+2t^8$ & 4 & 5 & Prop.~\ref{prop:R4} &Lem.~\ref{lem:symm} \\ \hline
 $12a_{1034}$ & 121 & $8-30t+45t^2-30t^3+8t^4$ & 2 & 5 & Prop.~\ref{prop:R4} & Lem.~\ref{lem:symm} \\ \hline
 $12a_{1083}$ & 169 & $2-9t+21t^2-33t^3+39t^4-33t^5+21t^6-9t^7+2t^8$ & 4 & $[5,6]$ & Prop.~\ref{prop:R4} &Lem.~\ref{lem:symm} \\ \hline
 $12a_{1087}$ & 225 & $1-8t+25t^2-48t^3+61t^4-48t^5+25t^6-8t^7+t^8$ & 4 & $[5,7]$ & Prop.~\ref{prop:R4} &Lem.~\ref{lem:symm} \\ \hline
 $12a_{1105}$ & 289 & $1-8t+30t^2-64t^3+83t^4-64t^5+30t^6-8t^7+t^8$ & 4 & $[5,6]$ & Prop.~\ref{prop:R4} &Lem.~\ref{lem:symm} \\ \hline
 $12a_{1119}$ & 169 & $2-9t+21t^2-33t^3+39t^4-33t^5+21t^6-9t^7+2t^8$ & 4 & 5 & Prop.~\ref{prop:R4} &Lem.~\ref{lem:symm} \\ \hline
 $12a_{1202}$ & 169 & $9-42t+67t^2-42t^3+9t^4$ & 2 & $[5,6]$ & Prop.~\ref{prop:R4} &Lem.~\ref{lem:symm} \\ \hline
 $12a_{1225}$ & 225 & $1-5t+14t^2-28t^3+41t^4-47t^5+41t^6-28t^7+14t^8-5t^9+t^{10}$ & 5 & $[5,7]$ & Lem.~\ref{lem:genus} & \cite[Fig. 8]{lamm} \\ \hline
 $12a_{1269}$ & 169 & $4-17t+38t^2-51t^3+38t^4-17t^5+4t^6$ & 3 & 5 & Prop.~\ref{prop:R4} & Lem.~\ref{lem:symm} \\ \hline
 $12a_{1277}$ & 121 & $4-14t+26t^2-33t^3+26t^4-14t^5+4t^6$ & 3 & 5 & Prop.~\ref{prop:R4} & Lem.~\ref{lem:symm} \\ \hline
 $12a_{1283}$ & 81 & $1-3t+6t^2-10t^3+13t^4-15t^5+13t^6-10t^7+6t^8-3t^9+t^{10}$ & 5 & 5 & Lem.~\ref{lem:genus} & Lem.~\ref{lem:symm} \\ \hline
\end{tabular}}
	\caption{Ribbon numbers of prime, alternating 12-crossing knots}
	\label{table:12a}
\end{table}

\section{Higher-genus ribbon numbers}\label{sec:hgrn}

In this section, we transition from the study of ribbon numbers of ribbon disks bounded by knots to the more general study of ribbon numbers of higher-genus ribbon surfaces bounded by knots.
The result is a more refined four-dimensional complexity measure for knots: the higher-genus ribbon number spectrum.
We apply the techniques introduced above to calculate this complexity measure in certain interesting cases, and we also discuss how some techniques that were useful in calculating ribbon numbers of knots fail to be useful in calculating higher-genus ribbon numbers.

A \emph{ribbon surface} for a knot $K$ is a surface $F\subset S^3$ with $\partial F = K$ such that $F$ is immersed with (only) finitely many ribbon intersections.
The \emph{ribbon number} $r(F)$ of $F$ is the number of ribbon intersections.
The \emph{genus-$g$ ribbon number} of a knot $K$ is the minimum
$$r_g(K) = \min\{r(F)\,|\, \text{ $F$ is an orientable, genus-$g$ ribbon surface for $K$}\},$$
where we adopt the convention that $r_g(K) = \infty$ if $K$ does not bound a ribbon surface of genus $g$.  To avoid confusion and for consistency of notation, we use $r_0(K)$ in this section to denote the ribbon number of $K$.
The \emph{higher-genus ribbon number spectrum}, or \emph{ribbon spectrum} for short, of a knot $K$ is the finite sequence
$$\frak r(K) = (r_0(K), r_1(K),r_2(K),\ldots, r_{g(K)}(K))$$
of higher-genus ribbon numbers of $K$, where $g(K)$ denotes the Seifert genus of $K$.

\begin{remark}
\label{rmk:spectrum}
	We note some immediate properties of the ribbon spectrum of a knot.
	\begin{enumerate}
		\item We have $r_g(K) = 0$ if and only if $g\geq g(K)$, so the spectrum truncates to a well-defined finite sequence whose length is $g(K) + 1$.
		\item By the convention that $r_g(K)=\infty$ if $K$ does not bound a ribbon surface of genus $g$, we have $r_g(K) = \infty$ whenever $g<g_4(K)$, where $g_4(K)$ denotes the smooth 4-genus of $K$.
		\item Given a genus-$g$ ribbon surface for $K$ with $r_g(K)$ ribbon intersections, we can apply the \emph{cellar door trick} depicted in Figure~\ref{fig:cellar_door} to one of the ribbon intersections to obtain a ribbon surface for $K$ with genus $r_g(K)-1$.  When a ribbon surface is orientable, one version of cellar door trick will result in an orientable surface, and the other will result in a non-orientable surface.  It follows that we always have $r_g(K) <  r_{g-1}(K)$ for all $g$, so the higher-genus ribbon number spectrum of a knot is decreasing (when finite).  (The term ``cellar door" refers to the image of the two door panels of a traditional outdoor cellar entrance; in this case, one panel opens ``outward", and the other opens ``inward.")

\begin{figure}[h!]
	\centering
	\includegraphics[width=.8\textwidth]{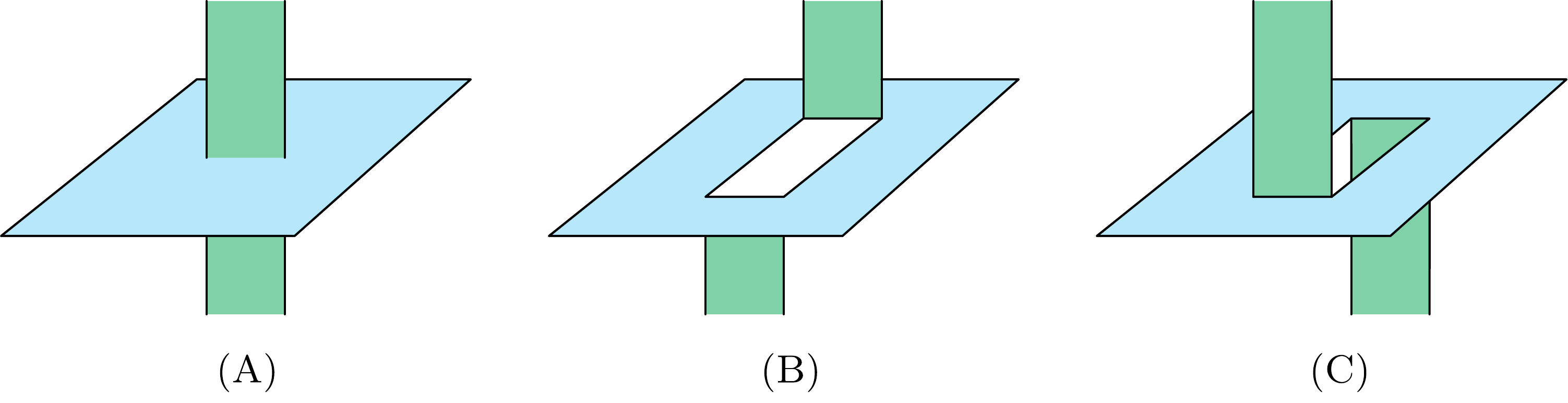}
	\caption{The cellar door trick can be used to turn a ribbon surface $F$ into a new ribbon surface $F'$ with one fewer ribbon singularity and $\chi(F') = \chi(F)-2$.  When $F$ is orientable, one version of the trick yields an orientable surface, and the other version yields a non-orientable surface.}
	\label{fig:cellar_door}
\end{figure}

\item We consider the higher-genus ribbon number spectrum to be an interesting indicator of the four-dimensional behavior of a knot.
For example, the spectrum is trivial in the case that $g(K)=g_4(K)$, reflecting the four-dimensional rigidity of such knots, which include, for example, strongly quasipositive knots~\cite{Shuma}.

\item In the case that $r_0(K) = g(K)$, the ribbon number spectrum of $K$ is also understood; by Proposition~\ref{prop:hgrn_bound} below, we have
\[ \frak r(K) = (r_0(K), r_0(K) - 1, r_0(K) - 2, \ldots , 1, 0).\]
We call such a ribbon number spectrum a \emph{stair-step} spectrum.
	\end{enumerate}
\end{remark}

\subsection{Higher-genus ribbon numbers of pretzel knots}
\label{subsec:pretzel}
\ 

The goal of this subsection is to give some examples of knots where the spectrum differs from the examples above and to explore the effectiveness and limitations of the techniques of the paper in bounding higher-genus ribbon numbers.
We pose the following as a motivating conjecture.

\begin{conjecture}
\label{conj:pretzel}
	Let $p,q\in\Z$ be odd, and let $n\in\N$.
	If $K$ is the $(2n+1)$-stranded pretzel knot
	$$K = P(q,p,-p,\ldots,p,-p)$$
	and $0\leq g\leq n$,
	then $r_g(K) = (n-g)(p-1)$, so
	$$\frak r(K) = (n(p-1), (n-1)(p-1), (n-2)(p-1),\ldots, p-1,0).$$
\end{conjecture}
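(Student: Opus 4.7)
The plan is to prove matching upper and lower bounds of $(n-g)(p-1)$ for each $g\in\{0,1,\ldots,n\}$. The Seifert genus of $K=P(q,p,-p,\ldots,p,-p)$ with $2n+1$ strands and all twist parameters odd is $n$, consistent with $r_n(K)=0$.

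For the upper bound, I would construct an explicit genus-$g$ ribbon surface $F_g$ for $K$ with $(n-g)(p-1)$ ribbon singularities. The construction exploits the decomposition of $K$ into the $q$-twist together with $n$ symmetric $(+p,-p)$ pairs of adjacent twist regions. Each such pair admits two natural local cancellations: (a) it can be killed by attaching a ribbon band between the $+p$ and $-p$ twists, which introduces exactly $p-1$ ribbon singularities, or (b) it can be killed by attaching a tube, adding $1$ to the genus of the surface but introducing no ribbon singularities. Applying (a) to $n-g$ of the pairs and (b) to the remaining $g$ pairs yields the desired $F_g$. This generalizes the construction underlying the upper bound in Theorem~\ref{thm:pretzel}, which is the case $p=3$, $n=2$.

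For the lower bound, the situation is more delicate. In the case $g=0$, one could hope to use the Alexander polynomial obstructions arising from the finiteness of the sets $\mathfrak R_r$, as in the proof of Proposition~\ref{prop:R4}, but this seems only to succeed for very small values of $n$ and $p$. Worse, for $g\geq 1$, Theorem~\ref{thm:torus_alex} shows that the Alexander polynomial gives no obstruction beyond the Seifert genus bound $r_g(K)\geq g(K)-g=n-g$, which is strictly weaker than the conjectured $(n-g)(p-1)$ whenever $p\geq 3$. Sharpening this will require genuinely four-dimensional input.

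The most promising route is to work with the double branched cover $\Sigma_2(K)$, which is a Seifert fibered rational homology sphere determined by $q$, $p$, and $n$, and to apply Casson--Gordon signature invariants or Heegaard Floer $d$-invariants along $\Sigma_2(K)$. The key step would be to show that any genus-$g$ ribbon surface for $K$ must use at least $p-1$ ribbon singularities for each of the $n-g$ ``un-absorbed'' $(+p,-p)$ pairs, for instance by extracting lower bounds on signature defects of metabolizers in the linking form of $H_1(\Sigma_2(K))$ coming from such a surface. The principal obstacle I anticipate is precisely this step: ruling out subtle trades between genus and ribbon singularities beyond the one-for-one trade produced by the cellar door trick, and establishing that the exchange rate is pinned at one unit of genus per $p-1$ singularities.
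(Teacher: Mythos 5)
This statement is Conjecture~\ref{conj:pretzel} in the paper, not a theorem: the authors do not prove it, and your proposal does not either. What you outline for the upper bound is essentially the paper's Lemma~\ref{lem:pretzel_upper} --- a genus-$g$ ribbon surface built by resolving $n-g$ of the $(+p,-p)$ pairs with a ``clasp region'' of $p-1$ ribbon singularities each and absorbing the remaining $g$ pairs into the genus --- and that part is fine. The genuine gap is the lower bound $r_g(K)\geq (n-g)(p-1)$, which you correctly flag as the hard step but then leave entirely speculative. The paper's best general lower bound (Lemma~\ref{lem:pretzel_lower}) is $r_g(K)\geq n-g+1$, obtained from the crosscap number $\gamma(K)=2n+1$ of Ichihara--Mizushima via Proposition~\ref{prop:hgrn_bound}; note that you only invoke the Seifert-genus bound $n-g$ and miss this $+1$ improvement, which is exactly what makes the $g=1$ case of Theorem~\ref{thm:pretzel} go through. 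The authors explicitly remark that the bounds from $g(K)$ and $\gamma(K)$ are independent of $p$ and hence cannot approach the conjecture; even for $p=3$, $n=4$ they can only sandwich $\frak r(K)$ between $(5,4,3,2,0)$ and $(8,6,4,2,0)$.

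Your suggested route through Casson--Gordon signatures or $d$-invariants of $\Sigma_2(K)$ is a plausible research direction, but as written it is a wish, not an argument: you give no mechanism that converts a metabolizer or signature defect into a count of ribbon singularities \emph{per clasp region}, and the ``exchange rate'' problem you name (ruling out trades of one unit of genus for fewer than $p-1$ singularities) is precisely the open content of the conjecture. The only cases actually settled are $p=3$, $n=2$ (Theorem~\ref{thm:pretzel}), where the genus-$0$ lower bound comes from $\Delta_K(t)=4-20t+33t^2-20t^3+4t^4\notin\Rr_3$ and the genus-$1$ lower bound from the crosscap number. So your proposal should be read as a correct reconstruction of the known upper bound together with an accurate diagnosis of why the lower bound is open, not as a proof.
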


In particular, this conjecture posits that the ribbon spectrum can have arbitrarily many jumps of arbitrarily large decrease.
We can give upper bounds consistent with this conjecture, but finding sharp lower bounds is more challenging.
Our main result in this direction is the following.

\begin{reptheorem}{thm:pretzel}
	For odd $q\in\Z$, the $5$--stranded pretzel knot $K = P(q,3,-3,3,-3)$ satisfies	
	$$\frak r(K) = (4,2,0).$$
\end{reptheorem}

Ribbon surfaces realizing these values are shown in Figure~\ref{fig:5-strand}.
We prove this theorem by establishing upper and lower bounds on $r_g(K)$ for the more general family of pretzel knots appearing in Conjecture~\ref{conj:pretzel}.
The upper bound is constructive.

\begin{figure}[h!]
	\centering
	\includegraphics[width=.9\textwidth]{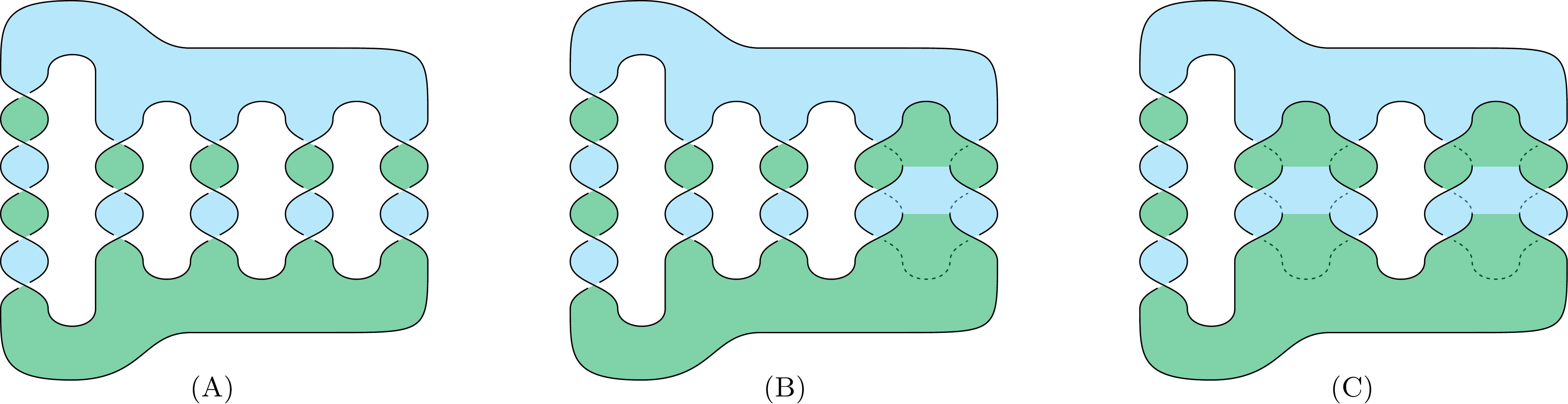}
	\caption{Ribbon surfaces for the pretzel knot $P(-5,3, -3, 3, -3)$}
	\label{fig:5-strand}
\end{figure}

\begin{lemma}
\label{lem:pretzel_upper}
	If $K$ is a pretzel knot satisfying the hypotheses of Conjecture~\ref{conj:pretzel}, and if $0~\leq~g~\leq~n$, then
	$$r_g(K) \leq (n-g)(p-1).$$
\end{lemma}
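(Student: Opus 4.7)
The plan is to give an explicit construction of an orientable genus-$g$ ribbon surface for $K$ with exactly $(n-g)(p-1)$ ribbon intersections, for each $g \in \{0, 1, \ldots, n\}$, directly generalizing the three surfaces depicted in Figure~\ref{fig:5-strand}.

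The starting point is the case $g = n$: the pretzel knot $K = P(q, p, -p, \ldots, p, -p)$ with $2n+1$ odd strands has Seifert genus at most $n$ via the standard Seifert algorithm applied to its pretzel diagram, producing an embedded orientable Seifert surface $F_n$ with $r(F_n) = 0$. This handles the $g = n$ case directly, and is the rightmost picture in Figure~\ref{fig:5-strand} when $n = 2$ and $p = 3$.

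Next, for each of the $n$ paired tangles $(p, -p)$ appearing in $K$, I would identify a canonical compressing band $\beta$ with endpoints on $K$ that, when attached to the current ribbon surface, compresses one genus-one handle of the surface into a disk. Tracking $\beta$ through the $(p, -p)$ twist region carefully, I expect it to pass through the adjacent twisted strands in a manner that contributes exactly $p - 1$ new ribbon intersections. Performing this band attachment on $n - g$ of the $n$ paired tangles (with the bands located in disjoint portions of the diagram, so as not to interfere) then yields a ribbon surface of genus $g$ with $(n-g)(p-1)$ ribbon intersections, establishing the inequality.

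The principal obstacle is to exhibit the compressing band $\beta$ explicitly and to verify the ribbon intersection count of $p - 1$ for general odd $p$. For $p = 3$ this is visible in Figure~\ref{fig:5-strand}, but for arbitrary odd $p$ it requires either a careful diagrammatic argument or an induction on $p$. A convenient formalism for making this transparent is a disk-band presentation as in Section~\ref{sec:prelim}: one takes the disks to correspond to Seifert disks of the pretzel diagram, and each attached band is placed to cancel one $(p, -p)$ pair, so the corresponding ribbon code records the $p - 1$ crossings from a single twist region directly.
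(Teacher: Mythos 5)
Your high-level plan---generalize Figure~\ref{fig:5-strand} by treating each $(p,-p)$ pair of the pretzel locally, so that each pair you ``resolve'' contributes $p-1$ ribbon intersections while each pair you leave intact contributes one unit of genus---is the same as the paper's, which packages the local gadget as a ``clasp region.'' But the mechanism you describe for resolving a pair does not do what you claim. Attaching a band $\beta$ with both endpoints on $K=\partial F$ performs band surgery on the boundary: it changes the boundary knot, and since $\chi$ drops by $1$ under a band attachment, an orientable result either gains a boundary component at fixed genus or gains genus---it can never ``compress a genus-one handle into a disk.'' The operation actually needed is a \emph{replacement}, not an attachment: starting from the pretzel Seifert surface (two disks joined by $2n+1$ twisted bands), one deletes the two oppositely twisted bands of a chosen $(p,-p)$ pair and substitutes a single band that clasps through the remaining surface, dropping the genus by one at the cost of ribbon self-intersections. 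Note also that the passage from the genus-$(g+1)$ surface to the genus-$g$ surface trades one unit of genus for $p-1$ ribbon intersections, so for $p>3$ these surfaces are not related by a single compression or by the cellar door trick of Remark~\ref{rmk:spectrum}; your framing of the step as compressing ``one genus-one handle'' by one band obscures where the factor $p-1$ comes from.

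Second, the count of exactly $p-1$ ribbon intersections per resolved pair is the entire quantitative content of the lemma, and you explicitly set it aside as ``the principal obstacle.'' Your closing remark that the ribbon code ``records the $p-1$ crossings from a single twist region directly'' is not a verification---a $p$-half-twist region has $p$ crossings, and the relation between crossings of the diagram and ribbon arcs of the substituted band is exactly what must be checked. Until you exhibit the replacement band explicitly and confirm that it meets the surface in $p-1$ ribbon arcs for general odd $p$ (and that the $n-g$ replacements can be made in disjoint portions of the diagram, which you do assert), the argument is a plausible outline rather than a proof.
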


\begin{proof}
	Figure~\ref{fig:5-strand} shows ribbon surfaces with the desired number of ribbon intersections in the case of $n=2$ and $p=3$.  When $g=1$, the surface depicted has one ``clasp region," and when $g=0$, the surface depicted has two ``clasp regions."  In both cases, each clasp region contains two ribbon intersections.  In the general case, we can construct a genus-$g$ ribbon surface for $K$ with $n-g$ clasp regions, each of which contains $p-1$ ribbon intersections, for a total of $(n-g)(p-1)$ ribbon intersections.	
\end{proof}

Note that Gabai established that if $K$ is a pretzel knot satisfying the hypotheses of Conjecture~\ref{conj:pretzel}, then $g(K) = n$~\cite[Theorem~3.2]{gabai}, so that $r_g(K) = 0$ when $g \geq n$.  For the lower bounds, we make use of the following connection between the higher-genus ribbon numbers of a knot and its Seifert genus and crosscap number.

\begin{proposition}
\label{prop:hgrn_bound}
	Let $K$ be a knot with Seifert genus $g(K)$ and crosscap number $\gamma(K)$.
	For any $g\geq 0$, we have:
	\begin{enumerate}
		\item $r_g(K) \geq g(K) - g$; and
		\item If $r_g(K) > 0$, then $2 r_g(K) \geq \gamma(K) - 2g$.
	\end{enumerate}
\end{proposition}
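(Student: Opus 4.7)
The plan is to derive both bounds by starting from a ribbon surface realizing $r_g(K)$ and iteratively resolving every ribbon singularity via the cellar door trick of Remark~\ref{rmk:spectrum}(3), choosing resolutions so as to produce either an embedded orientable or an embedded non-orientable surface bounded by $K$, and then comparing with $g(K)$ and $\gamma(K)$. Fix an orientable genus-$g$ ribbon surface $F$ for $K$ with $r_g(K) = r$ ribbon singularities, so that $\chi(F) = 1-2g$. Each application of the cellar door trick reduces the ribbon singularity count by one and changes the Euler characteristic by $-2$; moreover, at each step we may choose the version of the trick that outputs an orientable surface, or the version that outputs a non-orientable surface. The whole proof then reduces to an Euler characteristic bookkeeping.

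For part (1), I would apply the orientable version $r$ times in succession. The result is an embedded, orientable surface $F'$ bounded by $K$ with $\chi(F') = 1 - 2g - 2r$, hence of Seifert genus $g+r$. Since $g(K)$ is defined as the minimum genus among embedded orientable surfaces bounded by $K$, this yields $g(K) \leq g + r = g + r_g(K)$, which is exactly the bound in (1). For part (2), under the assumption $r_g(K) = r > 0$, I would first apply the non-orientable version of the cellar door trick at one ribbon singularity (which is possible because at least one exists), and then resolve the remaining $r-1$ singularities by either version. The resulting embedded surface $F''$ bounded by $K$ is non-orientable, with $\chi(F'') = 1 - 2g - 2r$, so its crosscap number is $2g + 2r$. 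Comparison with the minimum $\gamma(K)$ gives $\gamma(K) \leq 2g + 2r_g(K)$, which rearranges to (2).

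The only delicate point is justifying that each cellar door resolution behaves as claimed — reducing the ribbon count by one while decreasing $\chi$ by two, with a genuine choice between orientable and non-orientable output — but this is precisely the content of Remark~\ref{rmk:spectrum}(3) and Figure~\ref{fig:cellar_door}. Consequently I expect no serious obstacle; after that local model is accepted, the proposition is a one-line Euler characteristic calculation, and the hypothesis $r_g(K) > 0$ in part (2) is exactly what is needed to guarantee that a non-orientable resolution is actually available.
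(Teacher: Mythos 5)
Your proposal is correct and follows exactly the paper's argument: resolve all ribbon singularities via the cellar door trick, using the orientable version throughout for (1) and forcing at least one non-orientable resolution (available precisely because $r_g(K)>0$) for (2), then compare genus and crosscap number with their minima. The explicit Euler characteristic bookkeeping you include is just a slightly more detailed rendering of the same computation.
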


\begin{proof}
	Let $F$ be a genus-$g$ ribbon surface for $K$ with ribbon number $r_g(K)$.
	We can apply the orientable cellar door trick to each ribbon intersection to get an embedded, orientable surface $F'$ of genus $g+r_g(K)$.
	(Each application of the trick increases the genus by one; see Figure~\ref{fig:cellar_door}.)
	We must have $g + r_g(K) \geq g(K)$, so inequality (1) follows.
	
	Alternatively, as long the genus-$g$ ribbon surface $F$ is not embedded, applying the cellar door trick to each ribbon intersection so that at least one application disrespects orientation yields a non-orientable surface $F'$.
	Each application of the trick contributes two to the crosscap number of the surface.
	In the end, we must have $2g + 2 r_g(K) \geq \gamma(K)$, which gives inequality (2).
\end{proof}

A key ingredient to our proof of Theorem~\ref{thm:pretzel} is work of Ichihara and Mizushima that determines the crosscap numbers of the pretzel knots currently under consideration~\cite{IchMiz}.
We use their result in the proof of our next lemma.

\begin{lemma}
\label{lem:pretzel_lower}
	Let $K$ be a pretzel knot satisfying the hypotheses of Conjecture~\ref{conj:pretzel}.
	For $0\leq g < n$, we have
$r_g(K) \geq n-g+1$.
\end{lemma}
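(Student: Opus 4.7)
The plan is to show that $r_g(K) = n - g$ leads to a contradiction, by combining Proposition~\ref{prop:hgrn_bound} with a crosscap-number input. First I would invoke Gabai's theorem~\cite[Theorem~3.2]{gabai} to record $g(K) = n$. Proposition~\ref{prop:hgrn_bound}(1) then gives $r_g(K) \geq n - g \geq 1$ in the range $g < n$. Since $r_g(K) > 0$, Proposition~\ref{prop:hgrn_bound}(2) applies, yielding
$$2\, r_g(K) \geq \gamma(K) - 2g.$$

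Next, I would invoke the Ichihara-Mizushima computation~\cite{IchMiz} to obtain the odd crosscap bound $\gamma(K) \geq 2n + 1$ for these pretzel knots. Geometrically, the pretzel $P(q, p, -p, \ldots, p, -p)$ has $2n+1$ odd-twist tangles, from which a natural non-orientable checkerboard surface realizes $2n+1$ crosscaps. Substituting into the displayed inequality gives $2\, r_g(K) \geq 2(n - g) + 1$, and integrality of $r_g(K)$ forces the desired $r_g(K) \geq n - g + 1$. One can visualize the contradiction directly: if a genus-$g$ ribbon surface $F$ for $K$ had exactly $n - g$ ribbon intersections, then applying the non-orientable cellar-door trick (Figure~\ref{fig:cellar_door}) to at least one intersection and the orientable trick to the remaining ones would yield an embedded non-orientable surface for $K$ with crosscap number at most $2g + 2(n - g) = 2n < \gamma(K)$, a contradiction.

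The main obstacle is extracting the precise crosscap-number input $\gamma(K) \geq 2n + 1$ from~\cite{IchMiz} in the full parameter range of Conjecture~\ref{conj:pretzel}. If the statement there applies only to a subfamily (say the $p=3$ case relevant to Theorem~\ref{thm:pretzel}), the remaining cases would need to be closed by a direct lower bound on $\gamma(K)$---for instance, via the Goeritz form, the signature, or an Alexander-polynomial estimate---matched against the explicit non-orientable checkerboard surface described above. Once $\gamma(K) \geq 2n + 1$ is secured, the remainder of the argument is a short arithmetic deduction combining Gabai's and Ichihara-Mizushima's results through Proposition~\ref{prop:hgrn_bound}.
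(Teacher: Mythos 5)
Your proposal is correct and follows essentially the same route as the paper: Gabai for $g(K)=n$ (hence $r_g(K)>0$ when $g<n$), Ichihara--Mizushima for $\gamma(K)=2n+1$, Proposition~\ref{prop:hgrn_bound}(2), and a parity/integrality step. Your worry about the scope of~\cite{IchMiz} is unfounded, since every twist parameter of $P(q,p,-p,\ldots,p,-p)$ with $p,q$ odd is odd, which is exactly the hypothesis of their crosscap computation.
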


\begin{proof}
	Ichihara and Mizushima have established that if $K$ is a $(2n+1)$-stranded pretzel knot with all odd twist parameters, then $\gamma(K)=2n+1$~\cite{IchMiz}.  As observed above, if $g < n$, then $r_g(K) > 0$, and so we can apply Proposition~\ref{prop:hgrn_bound}, yielding the inequality
	$$2 r_g(K) \geq 2n+1 - 2g.$$
	Since $r_g(K)$, $n$, and $g$ are integers, this implies
	$$r_g(K) \geq n-g + 1.$$
\end{proof}

Combining the above work, we prove the main result of this section.

\begin{proof}[Proof of Theorem~\ref{thm:pretzel}]
	Let $K=P(q,3,-3,3,-3)$.
	The ribbon disk shown in Figure~\ref{fig:5-strand} has ribbon code isomorphic to $([1,2,5],[2,1,5],[3,4,5],[4,3,5])$, and we can compute
	$$\Delta_K(t) = 4-20t+33t^2-20t^3+4t^4.$$
	Since $\Delta_K(t) \notin \Rr_3$, we have $r_0(K)\geq 4$.
	This, combined with existence of the ribbon disk in Figure~\ref{fig:5-strand}, shows that $r_0(K)= 4$.
	
	Applying Lemma~\ref{lem:pretzel_lower} to the present setting where $n=2$, we have $r_1(K)\geq 2$.
	This, combined with the the existence of the ribbon once-punctured torus exhibited in Figure~\ref{fig:5-strand}, shows that $ r_1(K)=2$.
	Finally, the Seifert surface shown in Figure~\ref{fig:5-strand} shows that $g(K)\leq 2$, so $\frak r_2(K)=0$, completing the proof.
\end{proof}

\begin{remark}
	The lower bound given in Lemma~\ref{lem:pretzel_lower} is independent of $p$, yet we expect the higher-genus ribbon numbers of $K=P(q,p,-p,\ldots, p,-p)$ to grow linearly with $p$.
	Thus, the lower bounds coming from $g(K)$ and $\gamma(K)$ are not sufficient to approach Conjecture~\ref{conj:pretzel}.
	As $n$ increases, the lower bounds become ineffective for $g\leq g(K)-2$.
	For example, when $p=3$ and $n=4$, we have the following bounds on the ribbon spectrum of $K$:
	$$(5,4,3,2,0) \leq \frak r(K)\leq (8,6,4,2,0).$$
\end{remark}

\begin{remark}
	There is at least one additional knot in the table whose ribbon spectrum can be seen to have two non-trivial ``jumps" using the present techniques.
	Let $K = \mathbf{12a_{1202}}$.
	From Table~\ref{table:12a}, we know that $r_0(K) \geq 5$.
	KnotInfo~\cite{KnotInfo} asserts that $\gamma(K)=5$.
	It follows from Proposition~\ref{prop:hgrn_bound} that $r_1(K)\geq 2$.
	Figure~\ref{fig:12a1202_torus} shows that $r_1(K)\leq 3$, so we have
	$$(5,2,0) \leq \frak r(K)\leq (6,3,0).$$
	We conclude that $K$ has two jumps in its higher-genus ribbon number spectrum.
	It is worth remarking that $K$ is one of only three slice knots with 12 or fewer crossings for which it is unknown whether it is doubly slice~\cite{LivMei}.
\end{remark}

\begin{figure}[h!]
	\centering
	\includegraphics[width=.8\textwidth]{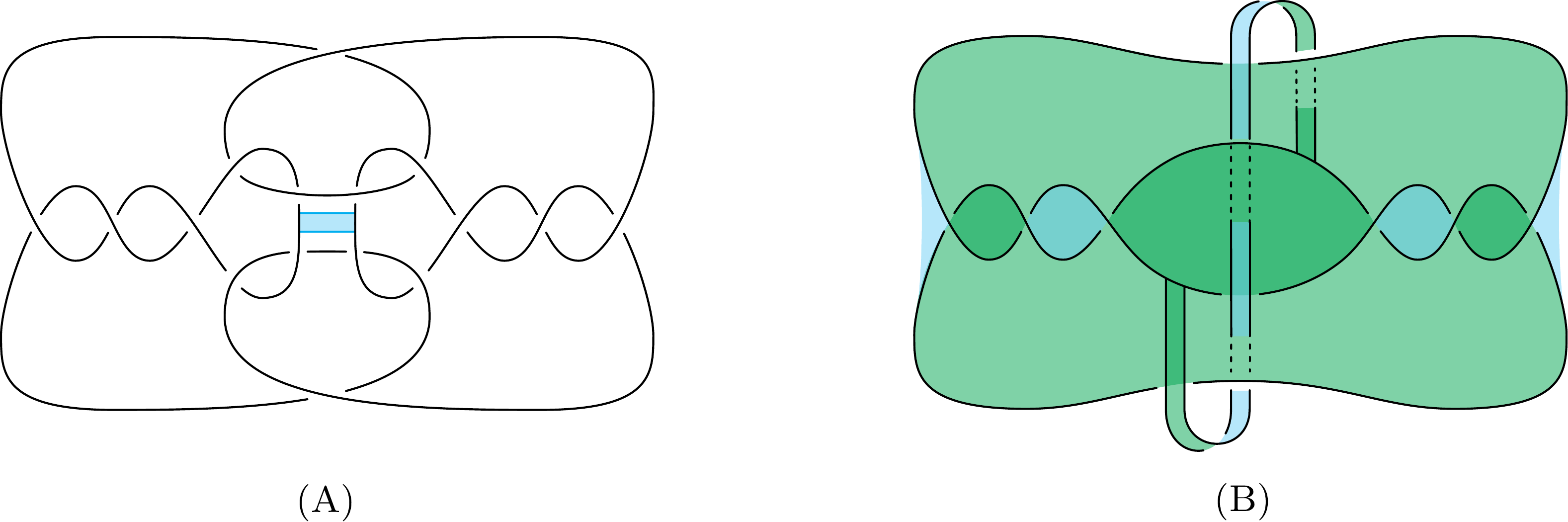}
	\caption{The knot $\mathbf{12_{a1202}}$ shown in (A) with a ribbon band and shown in (B) bounding a genus-one ribbon surface with ribbon number three}
	\label{fig:12a1202_torus}
\end{figure}

\subsection{Failure of obstruction for higher-genus ribbon numbers}
\ 

Given our work on (genus-0) ribbon numbers at the beginning of the paper, a natural question is

\begin{question}
Can Alexander polynomials be used to exhibit novel lower bounds on $r_1(K)$?
\end{question}

In this subsection, we prove that the answer is no.  As noted in the introduction, if the degree of $\Delta_K(t)$ is $2d$, then $g(K) \geq d$, and using Proposition~\ref{prop:hgrn_bound}, we can show $g_1(K) \geq g(K) - 1 \geq d-1$.  We will demonstrate that we can obtain no further data about $r_1(K)$ from $\Delta_K(t)$ beyond that which is filtered through $g(K)$.

\begin{reptheorem}{thm:torus_alex}
	If $K$ is a ribbon knot such that $\text{deg}(\Delta_K(t))=2d$, then there exists a knot $K'$ such that $\Delta_{K'} (t)=\Delta_K(t)$ and $r_1(K') = d-1$. 
\end{reptheorem}

In particular, the collection of Alexander polynomials of all knots with bounded $r_1$ is infinite, and their determinants are unbounded, in contrast to the behavior of these invariants with respect to $r_0$.  To prove the proposition, we will first define a family of ribbon codes that give rise to a suitably broad class of Alexander polynomials.

Let $m \geq 1$ and let $(n_0,\dots,n_m)$ be an $(m+1)$-tuple of integers.  Define the ribbon code
\[ \Gamma(n_0,\dots,n_m) = ([1,\underbrace{a_0,b_0,\dots,a_0,b_0}_{2|n_0|},-1,\underbrace{a_1,b_1,\dots,a_1,b_1}_{2|n_1|},-1,\dots,-1,\underbrace{a_m,b_m,\dots,a_m,b_m}_{2|n_m|},2]),\]
where $(a_i,b_i) = (-1,-2)$ if $n_i$ is positive, $(a_i,b_i) = (2,1)$ if $n_i$ is negative, and $a_i$ and $b_i$ do not appear in the code if $n_i = 0$.
The notation
\[ \underbrace{a_0,b_0,\dots,a_0,b_0}_{2|n_0|}\]
means that  $a_0$ and $b_0$ alternately $|n_0|$ times each, so that $F(\Gamma(n_0,\dots,n_m)) = 1$ and $r(\Gamma(n_0,\dots,n_m)) = m + 2 \sum |n_i|$.  This ribbon code is integral to our construction of the knots in Theorem~\ref{thm:torus_alex}, and we compute its Alexander polynomial below using Fox calculus, which we now recall.

\subsection{Disk groups and Fox calculus}
\label{subsec:groups}

For any immersed ribbon disk $\D$ in $S^3$, we can perturb $\D$ near its ribbon self-intersections to get an embedded disk in $B^4$, which we also denote $\D$, abusing notation.  We first recall how to obtain a presentation for the fundamental group $\pi(\D) = \pi_1(B^4\setminus\D)$ from a ribbon code for $\Dd$; this is a well-known process~\cite{howie_asphericity,yasuda:2-knot}.
Given a disk-band presentation $(D,B)$ for $\Dd$ with corresponding ribbon code $\Gamma$, recall that the exterior $B^4\setminus\nu(\Dd)$ can be built by taking one 1--handle for each disk in $D$, together with one 2--handle for each band in $B$~\cite[Chapter~6.2]{gompf-stipsicz}.
This gives a presentation for $\pi(\D)$ with a generator for each disk in $D$ (hence each vertex of $\Gamma$) and a relation for each band in $B$ (hence each marked edge of $\Gamma$)~\cite[Section~4.2]{gompf-stipsicz}.

Letting $x_i$ be the generator corresponding to the vertex $v_i$ of $\Gamma$, an edge $e_j$ of the form $[i_1,\ell_1,\ldots, \ell_m,i_2]$ corresponds to the relation $r_j = x_{i_1}w_j x_{i_2}^{-1}w_j^{-1}$, where $w_j = x_{|\ell_1|}^{\epsilon_1}\cdots x_{|\ell_m|}^{\epsilon_m}$.
Each instance $x_{|\ell_k|}^{\epsilon_k}$ in $w_j$ corresponds to a ribbon intersection $\alpha_k$ between the band $\frak b_j$ and the disk $D_{|\ell_k|}$.
Directing $e_j$ from $v_{i_1}$ to $v_{i_2}$ gives a direction for the band $\frak b_j$.
First note that $\epsilon_k$ will be positive (respectively, negative) if this band direction agrees with (respectively, disagrees with) the normal direction of $D_{|\ell_k|}$ at $\alpha_k$.
In terms of the ribbon code, this means that $\epsilon_k = \text{sign}(\ell_k)$ (respectively, $\epsilon_k = -\text{sign}(\ell_k)$) if the direction of $e_j$ agrees with (respectively, disagrees with) the local direction at $\alpha_k$.
For example, the ribbon code from Example~\ref{ex:code} (corresponding to the ribbon disk shown in Figure~\ref{fig:code}) gives rise to the presentation
$$\langle x_1,\, x_2,\,  x_3\,|\, x_1x_3^{-1}x_2x_3^{-1}x_2^{-1}x_3x_2^{-1}x_3,\  x_2x_1^{-1}x_3^{-1}x_1\rangle.$$

Let $\frac{\partial r_j}{\partial x_i}$ denote the Fox derivative of the relation $r_j$ with respect to the generator $x_i$~\cite{crowell-fox}.
Then, the matrix $A = (a_{ji})$, where $a_{ji} = \frac{\partial r_j}{\partial x_i}\vert_{x_i\mapsto t}$ is the Alexander matrix of $\Dd$:
It is a presentation matrix for the Alexander module, i.e., the first homology of the infinite cyclic cover of $\Dd$ (equivalently, of the 2-knot obtained by doubling $\Dd$), considered as a $\Z[t,t^{-1}]$-module.
Note that this matrix has a row for each relation and a column for each generator.
Letting $n$ denote the number of generators, the ideal generated by the $(n-1)$--minors of $A$ (i.e., the first elementary ideal) is called the Alexander ideal of the disk, and the characteristic polynomial $\Delta_\Dd(t)$ of this ideal is called the Alexander polynomial of the disk~\cite[Chapter~7]{kawauchi_survey}.
For example, the ribbon disk from Example~\ref{ex:code} and Figure~\ref{fig:code} has Alexander matrix
$$ A = \begin{bmatrix}
	1 & 1-2t^{-1} & -2+2t^{-1} \\
	-1+t^{-1} & 1 & -t^{-1}
\end{bmatrix},$$
so the Alexander module is cyclically generated by $\Delta_\Dd(t) = 2 - 3t + 2t^2$.

We are now prepared to calculate the Alexander polynomial of a boundary knot for the ribbon code $\Gamma(n_0,\ldots,n_m)$ described above.

\begin{lemma}\label{lem:alexander}
	Let $\Gamma = \Gamma(n_0,\dots,n_m)$ be defined as above.  We have $\Delta_\Gamma(t)~=~f(t)f(t^{-1})$, where
	$$f(t)=-n_0 +\sum_{i=1}^m(n_{i-1}-n_i)t^i + n_mt^{m+1} + t^m .$$
\end{lemma}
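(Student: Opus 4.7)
The ribbon code $\Gamma=\Gamma(n_0,\dots,n_m)$ has two vertices and fusion number~$1$, so any ribbon disk $\D$ realizing $\Gamma$ has disk group $\pi(\D)=\langle x_1,x_2 \mid r\rangle$ with $r=x_1\,w\,x_2^{-1}\,w^{-1}$, where $w$ is the word in $x_1^{\pm 1},x_2^{\pm 1}$ obtained by reading the markings of the single edge from $v_1$ to $v_2$ under the sign conventions of Section~\ref{subsec:groups}. The Alexander matrix is then the $1\times 2$ row $\bigl[\partial r/\partial x_1,\,\partial r/\partial x_2\bigr]\big|_{x_i\mapsto t}$. My plan is to exploit Fox's fundamental identity $\sum_i(\partial r/\partial x_i)(x_i-1)=r-1$: since $r$ represents the identity in $\pi(\D)$, the substitution $x_1=x_2=t$ forces the two entries of the Alexander matrix to be negatives of one another, so the Alexander ideal of $\D$ is principal, and a direct Fox-calculus expansion of $r$ gives
\[ \Delta_{\D}(t) \;=\; 1+(t-1)\,D(t), \qquad D(t):=\frac{\partial w}{\partial x_1}\bigg|_{x_i\mapsto t}. \]
Because $\Delta_\Gamma(t)=\Delta_K(t)=\Delta_{\D}(t)\Delta_{\D}(t^{-1})$ (see Remark~\ref{rem:yasuda}), it suffices to identify $\Delta_{\D}(t)$ with $\pm t^k f(t)$ for some integer $k$.

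To compute $D(t)$, I will decompose $w$ according to the block structure of $\Gamma(n_0,\dots,n_m)$. Once the orientation-induced sign of each letter is fixed, the word factors as
\[ w \;=\; W_0\,x_1^{\varepsilon_1}\,W_1\,x_1^{\varepsilon_2}\cdots x_1^{\varepsilon_m}\,W_m, \]
where each $x_1^{\varepsilon_j}$ arises from a separator marking labeled $-1$ and each $W_i$ is the length-$2|n_i|$ subword coming from the alternating pattern $(a_i,b_i)^{|n_i|}$ of block~$i$ (empty if $n_i=0$). By the Leibniz rule, each occurrence of $x_1^{\pm 1}$ in $w$ contributes to $\partial w/\partial x_1$ a term whose Abelianization is $\pm t^{d}$, where $d$ is the signed letter-count of the prefix that precedes that occurrence. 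Because the letters of $(a_i,b_i)$ alternate between the two generators with a fixed sign pattern, the $|n_i|$ contributions inside $W_i$ collapse (as a geometric sum) into a single monomial weighted by an integer of magnitude $|n_i|$.

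Organizing by powers of $t$, I expect the last separator together with the trailing block to supply the terms $t^m+n_m t^{m+1}$; the initial block to supply the constant $-n_0$ (since it has no separator to its left); and each pair of adjacent blocks $W_{i-1},W_i$ separated by $x_1^{\varepsilon_i}$ to deposit a coefficient $n_{i-1}-n_i$ at $t^i$ by telescoping across the separator. After multiplying by $(t-1)$ and adding~$1$, these contributions should reassemble, up to a global unit in $\Z[t,t^{-1}]$, into $f(t)$, and the formula $\Delta_\Gamma(t)=\Delta_{\D}(t)\Delta_{\D}(t^{-1})$ then yields the desired identity $\Delta_\Gamma(t)=f(t)f(t^{-1})$.

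The hard part will be the sign bookkeeping for the letters of $w$: for each marking $\ell_k$, one has to check whether the directed edge from $v_1$ to $v_2$ agrees or disagrees with the local direction at the corresponding ribbon intersection, to determine the sign on the exponent of $x_{|\ell_k|}$. The signs on the two letters of each pair $(a_i,b_i)$ must align so that the pair actually contributes a uniform monomial to $D(t)$, and the dichotomy $(a_i,b_i)\in\{(-1,-2),(2,1)\}$ must be compatible across the two sign regimes of $n_i$. I would pin down the signs by first checking $m=0$ with $n_0\in\{\pm 1,\pm 2\}$ (recovering the elements of $\Rr_2$) and the case $m\geq 1$ with all $n_i=0$, and then extend to the general case either by induction on $m$ or by directly expanding the factored word above.
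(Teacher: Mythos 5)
Your approach is essentially the paper's: both pass from the code to the one-relator presentation $\langle x_1,x_2\mid x_1wx_2^{-1}w^{-1}\rangle$ with $w=(x_1x_2^{-1})^{n_0}\prod_{i=1}^m x_1(x_1x_2^{-1})^{n_i}$ and compute $\Delta_\D(t)$ by Fox calculus, expanding the derivative of $w$ block by block and telescoping to land on $f(t)$. Your use of the fundamental identity to reduce everything to $\Delta_\D(t)=1+(t-1)\left.\frac{\partial w}{\partial x_1}\right\vert_t$ is a mild streamlining of the paper's separate computation of the derivative of $w^{-1}$ (and it does reproduce $f(t)$ exactly), while the sign bookkeeping you defer to small cases is likewise only asserted, not rederived, in the paper's proof.
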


\begin{proof}
Let $\D$ be a ribbon disk giving rise to $\Gamma$.  As above, we can perturb $\D$ to get an embedded disk in $B^4$, and by Lemma 3.1 of~\cite{FMZ}, we have
\[\Delta_\Gamma(t) = \Delta_{\partial \D}(t) = \Delta_{\D}(t) \cdot \Delta_{\D}(t^{-1}).\]
Thus, it suffices to show that $\Delta_{\D}(t) = f(t)$. 
According to the procedure given above, the fundamental group $\pi(\D)$ of the exterior of this embedded ribbon disk can be computed from the ribbon code to be presented as
$$\pi(\D) = \langle x_1, x_2 \mid x_1 w x_2^{-1}w^{-1}\rangle,$$
where
$$w=(x_1x_2^{-1})^{n_0} \prod_{i=1}^m x_1 (x_1x_2^{-1})^{n_i}.$$

To make the Fox calculus derivations more efficient, we let
	$$\left.\frac{\partial}{\partial x_1}\right\vert_t z = \left.\frac{\partial}{\partial x_1} z\right\vert_{x_1,x_2\mapsto t}$$
for any word $z\in\pi(\D)$.
In other words, we evaluate $x_1\mapsto t$ and $x_2\mapsto t$ as we derive.
We first calculate the evaluated Fox derivatives for $w$ and $w^{-1}$.
We have
\begin{eqnarray*}
	\left.\frac{\partial}{\partial x_1}\right\vert_t w & = & \left.\frac{\partial}{\partial x_1}\right\vert_t (x_1x_2^{-1})^{n_0} +\sum_{i=1}^m t^{i-1}\left.\frac{\partial}{\partial x_1}\right\vert_t x_1(x_1x_2^{-1})^{n_i} \\
	& = & n_0+\sum_{i=1}^m t^{i-1}(1+n_i t),
\end{eqnarray*}
while
\begin{eqnarray*}
	\left.\frac{\partial}{\partial x_1}\right\vert_t w^{-1} & = & \sum_{i=0}^{m-1} t^{-i}\left.\frac{\partial}{\partial x_1}\right\vert_t (x_2x_1^{-1})^{n_{m-i}}x_1^{-1} + t^{-m}\left.\frac{\partial}{\partial x_1}\right\vert_t (x_2x_1^{-1})^{n_0}\\
	& = & \sum_{i=0}^{m-1}t^{-i}(-n_{m-i}-t^{-1}) - n_0t^{-m}  \\
	& = & -n_0t^{-m}-\sum_{i=0}^{m-1} t^{-m+i+1}(n_{i+1}+t^{-1}),
\end{eqnarray*}
where we have re-indexed the sum between the last two lines.

From this, we calculate
\begin{eqnarray*}
	\left.\frac{\partial}{\partial x_1}\right\vert_t x_1 w x_2^{-1}w^{-1} & = & 1 + t\left(n_0+\sum_{i=1}^mt^{i-1}(1+n_it)\right) \\
	&& \hspace{2cm} + t^m\left(-n_0t^{-m} - \sum_{i=0}^{m-1}t^{-m+i+1}(n_{i+1}+t^{-1})\right) \\
	& = & \sum_{i=0}^mt^i(1+n_it) - n_0 - \sum_{i=0}^{m-1}t^{i+1}(n_{i+1} + t^{-1}) \\
	& = & \sum_{i=0}^{m-1}(t^i + n_it^{i+1} - n_{i+1}t^{i+1}-t^i) - n_0 + t^m + n_mt^{m+1} \\
	& = & -n_0+\sum_{i=1}^m(n_{i-1}-n_i)t^i + n_mt^{m+1} + t^m,
\end{eqnarray*}
where we have re-indexed the sum between the last two lines.

The computation for the Fox derivative $\left.\frac{\partial}{\partial x_2}\right\vert_t x_1 w x_2^{-1}w^{-1}$ is similar and yields the same polynomial, but negated.
It follows that the Alexander matrix is $\begin{bmatrix}f(t) & -f(t)\end{bmatrix}$, so the Alexander polynomial is $\Delta_\D(t) = f(t)$.
As desired, we deduce $\Delta_\Gamma(t) = f(t)f(t^{-1})$.
\end{proof}

Finally, given the ribbon code $\Gamma(n_0,\dots,n_m)$, we construct a corresponding ribbon knot $K(n_0,\dots,n_m)$, which we will use to prove Theorem~\ref{thm:torus_alex}.

\begin{definition}
\label{def:ribbon_torus}
	As above, let $(n_0, \ldots, n_m)$ be a sequence of integers with $m\geq 1$.
	Let $A$ be an annulus in $S^3$ whose boundary is a 2-component unlink, and let $D_1$ and $D_2$ be disjoint disks bounded by $\partial A$, so that $D_1 \cup A \cup D_2$ is an embedded 2-sphere $S$. 
	Orient these three surfaces so that the positive normal directions for the $D_i$ point into $S$, while the positive normal direction for $A$ points out of $S$; see Figure~\ref{fig:ribbon_torus}, where the normal directions point out of the green side of each surface.
	Create a band $\frak b$ in the following manner; see Figure~\ref{fig:ribbon_torus}(A), where the case of $(4,-2)$ is shown.
    \begin{enumerate}
        \item Attach one end of the band $\frak b$ to $\partial D_1$, so that $\frak b$ begins on the outside of $S$.
        \item \begin{enumerate}
        \item If $n_0 > 0$, wind $\frak b$ around $A$ a total of $n_0$ times by passing first through $D_1$ and then through $D_2$.
        \item If $n_0 < 0$, wind $\frak b$ around $A$ a total of $|n_0|$ times by passing first through $D_2$ and then through $D_1$.
        \item If $n_0 = 0$, do not wind $\frak b$ around $A$ at all.
        \end{enumerate}
  \item Do the following for each $i\in\{1,\ldots,m\}$: 
        \begin{enumerate}
            \item Pass $\frak b$ through $D_1$ once (crossing $S$ from the outside to the inside) and then through $A$ once (passing back to the outside of $S$).
            \item Wind $\frak b$ around the annulus $|n_i|$ times with the same conventions as in step (2).
        \end{enumerate}
        \item Attach the other end of $\frak b$ to $\partial D_2$ so that $A \cup \frak b$ is an orientable ribbon surface.
    \end{enumerate}

Let $F=F(n_0, \ldots, n_m)$ denote the ribbon surface $A \cup \frak b$, which is a once-punctured torus with $m$ ribbon intersections, and let $K = K(n_0,\dots,n_m) = \pd F$.  By construction, $\D = \D(n_0,\dots,n_m) = D_1 \cup \frak b \cup D_2$ is a ribbon disk for $K$ with $m + 2 \sum|a_i|$ ribbon intersections.
Note that neither $F$ nor $\D$ is uniquely defined up to isotopy, but regardless of any choices made in the construction above, $\D$ gives rise to the ribbon code $\Gamma(n_0,\dots,n_m)$.
\end{definition}

\begin{figure}[h!]
	\centering
	\includegraphics[width=.9\textwidth]{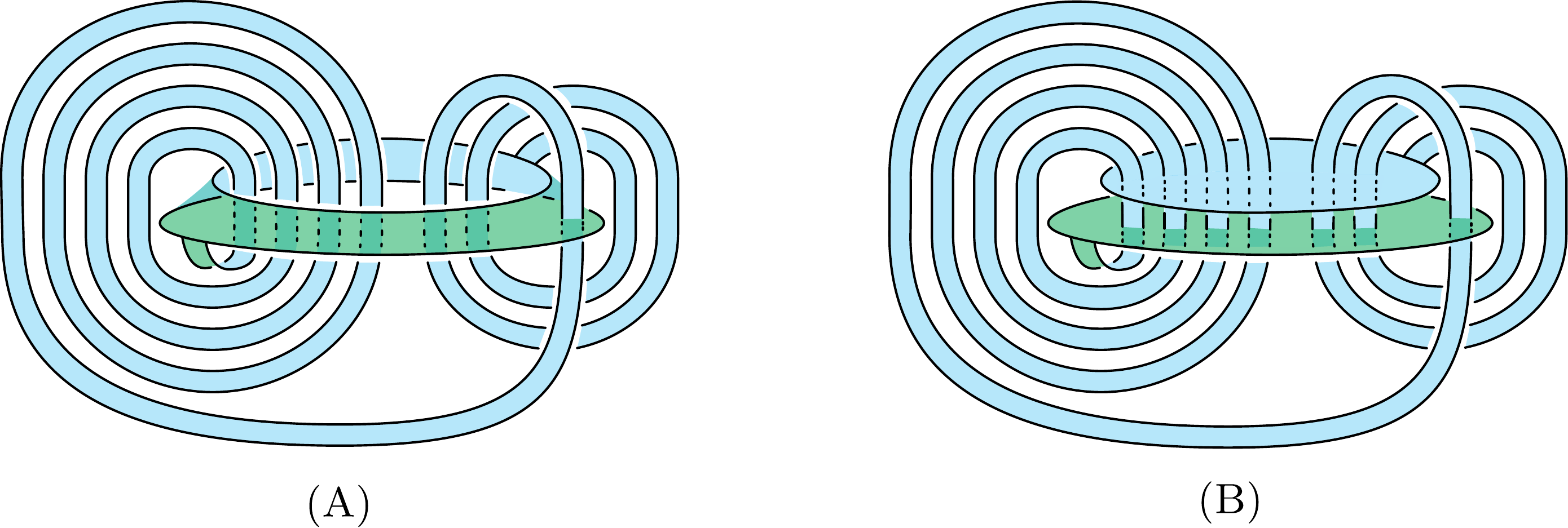}
	\caption{(A) The genus-one ribbon surface $F(4,-2)$, and (B) the ribbon disk $\D(4,-2)$.}
	\label{fig:ribbon_torus}
\end{figure}

To conclude, we use $K(n_0,\dots,n_m)$ to prove Theorem~\ref{thm:torus_alex}.

\begin{proof}[Proof of Theorem~\ref{thm:torus_alex}]
	Since $K$ is ribbon, we have $\Delta_K(t)=g(t)g(t^{-1})$ for some polynomial $g(t)$ with $g(1)=1$.
	Let $m=d-1$, and write
	$$g(t)=\sum_{i=0}^{m+1} a_i t^i.$$
	For $0\leq j\leq m-1$, define
	$$n_j=-\sum_{i=0}^j a_i,$$
	and define
	$$n_m = 1 -\sum_{i=0}^ma_i.$$
	Note that this implies the following:
	\begin{enumerate}
		\item $n_0=-a_0$,
		\item $n_{j-1}-n_j = a_j$ for $1\leq j\leq m-1$,
		\item $n_{m-1}-n_m = a_m-1$, and
		\item $a_{m+1} = 1 - \sum_{i=0}^ma_i = n_m$,
	\end{enumerate}
	with the last fact following from the fact that $g(1)=1$.
		
	Now, let $F=F(n_0, \dots, n_m)$, and let $K' = \partial F$.  Then $K'$ bounds the ribbon disk $\D(n_0,\dots,n_m)$ with corresponding ribbon code $\Gamma(n_0,\dots,n_m)$.
	From Lemma~\ref{lem:alexander}, we have
	$$\Delta_{K'} (t)=f(t)f(t^{-1}),$$
	where
	$$f(t)  =  -n_0 + \sum_{i=1}^m(n_{i-1}-n_i)t^i +  n_mt^{m+1} +t^m.$$
	Using the above facts, we have
	\begin{eqnarray*}
		f(t) & = & a_0 + \sum_{i=1}^{m-1}a_it^i + (n_{m-1}-n_m)t^m + a_{m+1}t^{m+1} +  t^m \\
		& = & \sum_{i=0}^{m-1} a_i t^i + a_{m+1}t^{m+1} + (a_m-1)t^m  + t^m \\
		& = & \sum_{i=0}^{m+1}a_it^i
	\end{eqnarray*} 
	So $f(t) = g(t)$, thus we have $\Delta_K (t)=\Delta_{K'}(t)$.
	
	Furthermore, by construction, we have that $r_1(K') \leq m=d-1$.
	By Proposition~\ref{prop:hgrn_bound} and the classical fact that $\deg(\Delta_{K'}) \leq 2g(K')$, we have
	$$r_1(K')\geq g(K')-1\geq \frac{1}{2}\deg(\Delta_{K'}(t))-1 = d-1,$$
	as desired.
\end{proof}

\newpage

\begin{table}[!ht]
    \centering
	\resizebox*{!}{.95\dimexpr\textheight-2\baselineskip\relax}{%
    \begin{tabular}{|l|l|l|l|}
    \hline
        Structure & Ribbon Code & Alexander Polynomial & Det \\ \hline
        1 & ([1,2,2,2,1,2]) & $1 - t + t^2 - 3t^3 + t^4 - t^5 + t^6$ & 9 \\ \hline
        1 & ([1,2,2,2,-1,2]) & $1 - t - t^3 + 3t^4 - t^5 - t^7 + t^8$ & 9 \\ \hline
        1 & ([1,2,2,1,1,2]) & $2 - 5t^2 + 2t^4$ & 1 \\ \hline
        1 & ([1,2,2,-1,-1,2]) & $1 - 2t^2 + 3t^4 - 2t^6 + t^8$ & 1 \\ \hline
        1 & ([1,2,1,2,1,2]) & $6 - 13t + 6t^2$ & 25 \\ \hline
        1 & ([1,2,1,2,-1,2]) & $2 - 6t + 9t^2 - 6t^3 + 2t^4$ & 25 \\ \hline
        1 & ([1,2,1,-2,1,2]) & $1 - 3t + 5t^2 - 7t^3 + 5t^4 - 3t^5 + t^6$ & 25 \\ \hline
        1 & ([1,2,1,-2,-1,2]) & $1 - 6t + 11t^2 - 6t^3 + t^4$ & 25 \\ \hline
        1 & ([1,2,-1,2,1,2]) & $1 - 3t + 5t^2 - 7t^3 + 5t^4 - 3t^5 + t^6$ & 25 \\ \hline
        1 & ([1,2,-1,2,-1,2]) & $1 - 2t + 3t^2 - 4t^3 + 5t^4 - 4t^5 + 3t^6 - 2t^7 + t^8$ & 25 \\ \hline
        2.2 & ([1,2,2,3,2],[2,1,3]) & $1 - 2t + 4t^3 - 7t^4 + 4t^5 - 2t^7 + t^8$ & 9 \\ \hline
        2.2 & ([1,2,2,-3,2],[2,1,3]) & $1 - t - 3t^2 + 7t^3 - 3t^4 - t^5 + t^6$ & 9 \\ \hline
        2.2 & ([1,-2,-2,3,2],[2,1,3]) & $2 - 5t + 2t^2$ & 9 \\ \hline
        2.2 & ([1,-2,-2,-3,2],[2,1,3]) & $1 - t - 3t^2 + 7t^3 - 3t^4 - t^5 + t^6$ & 9 \\ \hline
        2.3 & ([1,2,3,3,2],[2,1,3]) & $1 - 2t + t^2 + 2t^3 - 5t^4 + 2t^5 + t^6 - 2t^7 + t^8$ & 1 \\ \hline
        2.3 & ([1,2,-3,-3,2],[2,1,3]) & $1 - t - t^2 + 3t^3 - t^4 - t^5 + t^6$ & 1 \\ \hline
        2.3 & ([1,-2,3,3,2],[2,1,3]) & $1 - 3t - t^2 + 7t^3 - t^4 - 3t^5 + t^6$ & 1 \\ \hline
        2.3 & ([1,-2,-3,-3,2],[2,1,3]) & $2 - 3t - 2t^2 + 7t^3 - 2t^4 - 3t^5 + 2t^6$ & 1 \\ \hline %
        2.4 & ([1,2,1,3,2],[2,1,3]) & $1 - 3t^2 + t^4$ & 1 \\ \hline
        2.4 & ([1,2,1,-3,2],[2,1,3]) & $1$ & 1 \\ \hline
        2.4 & ([1,2,-1,3,2],[2,1,3]) & $1 - 3t + 2t^2 + 3t^3 - 7t^4 + 3t^5 + 2t^6 - 3t^7 + t^8$ & 1 \\ \hline
        2.4 & ([1,2,-1,-3,2],[2,1,3]) & $1 - t - t^2 + 3t^3 - t^4 - t^5 + t^6$ & 1 \\ \hline
        2.4 & ([1,-2,1,3,2],[2,1,3]) & $1$ & 1 \\ \hline
        2.4 & ([1,-2,1,-3,2],[2,1,3]) & $1 - t - t^2 + 3t^3 - t^4 - t^5 + t^6$ & 1 \\ \hline
        2.4 & ([1,-2,-1,3,2],[2,1,3]) & $1 - 3t - t^2 + 7t^3 - t^4 - 3t^5 + t^6$ & 1 \\ \hline
        2.4 & ([1,-2,-1,-3,2],[2,1,3]) & $1$ & 1 \\ \hline
        2.4 & ([1,2,3,1,2],[2,1,3]) & $1 - t + t^2 - 3t^3 + t^4 - t^5 + t^6$ & 9 \\ \hline
        2.4 & ([1,2,3,-1,2],[2,1,3]) & $1 - 2t + 3t^2 - 2t^3 + t^4$ & 9 \\ \hline
        2.4 & ([1,2,-3,1,2],[2,1,3]) & $1 - t - 3t^2 + 7t^3 - 3t^4 - t^5 + t^6$ & 9 \\ \hline
        2.4 & ([1,2,-3,-1,2],[2,1,3]) & $1 - 2t + 3t^2 - 2t^3 + t^4$ & 9 \\ \hline
        2.4 & ([1,-2,3,1,2],[2,1,3]) & $2 - 5t + 2t^2$ & 9 \\ \hline
        2.4 & ([1,-2,3,-1,2],[2,1,3]) & $2 - 5t + 2t^2$ & 9 \\ \hline
        2.4 & ([1,-2,-3,1,2],[2,1,3]) & $2 - 2t - 4t^2 + 9t^3 - 4t^4 - 2t^5 + 2t^6$ & 9 \\ \hline
        2.4 & ([1,-2,-3,-1,2],[2,1,3]) & $2 - 5t + 2t^2$ & 9 \\ \hline
	\end{tabular}}
    \caption{Ribbon codes and Alexander polynomials for Structures 1 and 2}
    \label{table:rc1-2}
\end{table}

\begin{table}[!ht]
    \centering
	\resizebox*{!}{.95\dimexpr\textheight-2\baselineskip\relax}{%
    \begin{tabular}{|l|l|l|l|}
    \hline
        Structure & Ribbon Code & Alexander Polynomial & Det \\ \hline
        3.1 & ([1,2,3,2],[2,3,1,3]) & $2 - 6t + 10t^2 - 13t^3 + 10t^4 - 6t^5 + 2t^6$ & 49 \\ \hline
        3.1 & ([1,2,3,2],[2,3,-1,3]) & $1 - 3t + 6t^2 - 9t^3 + 11t^4 - 9t^5 + 6t^6 - 3t^7 + t^8$ & 49 \\ \hline
        3.1 & ([1,2,3,2],[2,-3,1,3]) & $1 - 5t + 11t^2 - 15t^3 + 11t^4 - 5t^5 + t^6$ & 49 \\ \hline
        3.1 & ([1,2,3,2],[2,-3,-1,3]) & $1 - 5t + 11t^2 - 15t^3 + 11t^4 - 5t^5 + t^6$ & 49 \\ \hline
        3.1 & ([1,2,-3,2],[2,3,1,3]) & $3 - 12t + 19t^2 - 12t^3 + 3t^4$ & 49 \\ \hline
        3.1 & ([1,2,-3,2],[2,3,-1,3]) & $1 - 5t + 11t^2 - 15t^3 + 11t^4 - 5t^5 + t^6$ & 49 \\ \hline
        3.1 & ([1,2-,3,2],[2,-3,1,3]) & $4 - 12t + 17t^2 - 12t^3 + 4t^4$ & 49 \\ \hline
        3.1 & ([1,2,-3,2],[2,-3,-1,3]) & $2 - 12t + 21t^2 - 12t^3 + 2t^4$ & 49 \\ \hline
        
		3.1 & ([1,2,1,2],[2,3,1,3]) & $4 - 20t + 33t^2 - 20t^3 + 4t^4$ & 81 \\ \hline
		3.1 & ([1,2,1,2],[2,3,-1,3]) & $2 - 9t + 18t^2 - 23t^3 + 18t^4 - 9t^5 + 2t^6$ & 81 \\ \hline
		3.1 & ([1,2,1,2],[2,-3,1,3]) & $2 - 9t + 18t^2 - 23t^3 + 18t^4 - 9t^5 + 2t^6$ & 81 \\ \hline
		3.1 & ([1,2,1,2],[2,-3,-1,3])	 & $4 - 20t + 33t^2 - 20t^3 + 4t^4$ & 81 \\ \hline
		3.1 & ([1,2,-1,2],[2,3,1,3]) & $2 - 9t + 18t^2 - 23t^3 + 18t^4 - 9t^5 + 2t^6$ & 81 \\ \hline
		3.1 & ([1,2,-1,2],[2,3,-1,3])	 & $1 - 4t + 10t^2 - 16t^3 + 19t^4 - 16t^5 + 10t^6 - 4t^7 + t^8$ & 81 \\ \hline
		3.1 & ([1,2,-1,2],[2,-3,1,3])	 & $1 - 4t + 10t^2 - 16t^3 + 19t^4 - 16t^5 + 10t^6 - 4t^7 + t^8$ & 81 \\ \hline
        3.1 & ([1,2,-1,2],[2,-3,-1,3]) & $2 - 9t + 18t^2 - 23t^3 + 18t^4 - 9t^5 + 2t^6$ & 81 \\ \hline

        3.1 & ([1,2,-3,2],[2,-1,-1,3]) & $1 - 3t - t^2 + 7t^3 - t^4 - 3t^5 + t^6$ & 1 \\ \hline
        3.1 & ([1,2,-3,2],[2,1,1,3]) & $2 - 3t - 2t^2 + 7t^3 - 2t^4 - 3t^5 + 2t^6$ & 1 \\ \hline
        3.1 & ([1,2,3,2],[2,-1,-1,3])	 & $1 - t - t^2 + 3t^3 - t^4 - t^5 + t^6$ & 1 \\ \hline
        3.1 & ([1,2,3,2],[2,1,1,3]) & $1 - 2t + t^2 + 2t^3 - 5t^4 + 2t^5 + t^6 - 2t^7 + t^8$ & 1 \\ \hline


        4.1 & ([1,2,3,2],[2,4,3],[3,1,4]) & $3 - 13t + 27t^2 - 35t^3 + 27t^4 - 13t^5 + 3t^6$ & 121 \\ \hline
        4.1 & ([1,2,3,2],[2,-4,3],[3,1,4]) & $1 - 5t + 14t^2 - 25t^3 + 31t^4 - 25t^5 + 14t^6 - 5t^7 + t^8$ & 121 \\ \hline
        4.1 & ([1,2,-3,2],[2,4,3],[3,1,4]) & $1 - 9t + 29t^2 - 43t^3 + 29t^4 - 9t^5 + t^6$ & 121 \\ \hline
        4.1 & ([1,2,-3,2],[2,-4,3],[3,1,4]) & $2 - 12t + 28t^2 - 37t^3 + 28t^4 - 12t^5 + 2t^6$ & 121 \\ \hline
        4.1 & ([1,-2,3,2],[2,4,3],[3,1,4]) & $2 - 11t + 28t^2 - 39t^3 + 28t^4 - 11t^5 + 2t^6$ & 121 \\ \hline
        4.1 & ([1,-2,3,2],[2,-4,3],[3,1,4]) & $2 - 12t + 28t^2 - 37t^3 + 28t^4 - 12t^5 + 2t^6$ & 121 \\ \hline
        4.1 & ([1,-2,-3,2],[2,4,3],[3,1,4]) & $1 - 6t + 15t^2 - 24t^3 + 29t^4 - 24t^5 + 15t^6 - 6t^7 + t^8$ & 121 \\ \hline
        4.1 & ([1,-2,-3,2],[2,-4,3],[3,1,4]) & $1 - 5t + 14t^2 - 25t^3 + 31t^4 - 25t^5 + 14t^6 - 5t^7 + t^8$ & 121 \\ \hline
        4.2 & ([1,4,3,2],[2,1,3],[3,2,4]) & $2 - 8t + 18t^2 - 25t^3 + 18t^4 - 8t^5 + 2t^6$ & 81 \\ \hline
        4.2 & ([1,4,3,2],[2,-1,3],[3,2,4]) & $6 - 20t + 29t^2 - 20t^3 + 6t^4$ & 81 \\ \hline
        4.2 & ([1,4,-3,2],[2,1,3],[3,2,4]) & $2 - 9t + 18t^2 - 23t^3 + 18t^4 - 9t^5 + 2t^6$ & 81 \\ \hline
        4.2 & ([1,4,-3,2],[2,-1,3],[3,2,4]) & $1 - 7t + 19t^2 - 27t^3 + 19t^4 - 7t^5 + t^6$ & 81 \\ \hline
        4.2 & ([1,-4,3,2],[2,1,3],[3,2,4]) & $1 - 7t + 19t^2 - 27t^3 + 19t^4 - 7t^5 + t^6$ & 81 \\ \hline
        4.2 & ([1,-4,3,2],[2,-1,3],[3,2,4]) & $1 - 7t + 19t^2 - 27t^3 + 19t^4 - 7t^5 + t^6$ & 81 \\ \hline
        4.2 & ([1,-4,-3,2],[2,1,3],[3,2,4]) & $1 - 4t + 10t^2 - 16t^3 + 19t^4 - 16t^5 + 10t^6 - 4t^7 + t^8$ & 81 \\ \hline
        4.2 & ([1,-4,-3,2],[2,-1,3],[3,2,4]) & $1 - 5t + 10t^2 - 15t^3 + 19t^4 - 15t^5 + 10t^6 - 5t^7 + t^8$ & 81 \\ \hline
        
        5.2 & ([1,3,2],[2,1,4,3],[3,2,4]) & $2 - 12t + 21t^2 - 12t^3 + 2t^4$ & 49 \\ \hline
        5.2 & ([1,3,2],[2,1,-4,3],[3,2,4]) & $1 - 5t + 11t^2 - 15t^3 + 11t^4 - 5t^5 + t^6$ & 49 \\ \hline
        5.2 & ([1,3,2],[2,-1,4,3],[3,2,4]) & $1 - 5t + 11t^2 - 15t^3 + 11t^4 - 5t^5 + t^6$ & 49 \\ \hline
        5.2 & ([1,3,2],[2,-1,-4,3],[3,2,4]) & $4 - 12t + 17t^2 - 12t^3 + 4t^4$ & 49 \\ \hline
        5.2 & ([1,-3,2],[2,1,4,3],[3,2,4]) & $1 - 5t + 11t^2 - 15t^3 + 11t^4 - 5t^5 + t^6$ & 49 \\ \hline
        5.2 & ([1,-3,2],[2,1,-4,3],[3,2,4]) & $4 - 12t + 17t^2 - 12t^3 + 4t^4$ & 49 \\ \hline
        5.2 & ([1,-3,2],[2,-1,4,3],[3,2,4]) & $1 - 4t + 6t^2 - 8t^3 + 11t^4 - 8t^5 + 6t^6 - 4t^7 + t^8$ & 49 \\ \hline
        5.2 & ([1,-3,2],[2,-1,-4,3],[3,2,4]) & $1 - 5t + 11t^2 - 15t^3 + 11t^4 - 5t^5 + t^6$ & 49 \\ \hline
	\end{tabular}}
    \caption{Ribbon codes and Alexander polynomials for Structures 3, 4, and~5}
    \label{table:rc3-5}
\end{table}

\begin{table}[!ht]
    \centering
	\resizebox*{!}{.95\dimexpr\textheight-2\baselineskip\relax}{%
    \begin{tabular}{|l|l|l|l|}
    \hline
        Structure & Ribbon Code & Alexander Polynomial & Det \\ \hline
        6.1.2 & ([1,4,2,4],[4,3,2],[4,1,3]) & $1 - 4t + 9t^2 - 16t^3 + 21t^4 - 16t^5 + 9t^6 - 4t^7 + t^8$ & 81 \\ \hline
        6.1.2 & ([1,4,2,4],[4,3,2],[4,-1,3]) & $2 - 9t + 18t^2 - 23t^3 + 18t^4 - 9t^5 + 2t^6$ & 81 \\ \hline
        6.1.2 & ([1,4,2,4],[4,-3,2],[4,1,3]) & $2 - 9t + 18t^2 - 23t^3 + 18t^4 - 9t^5 + 2t^6$ & 81 \\ \hline
        6.1.2 & ([1,4,2,4],[4,-3,2],[4,-1,3]) & $1 - 7t + 19t^2 - 27t^3 + 19t^4 - 7t^5 + t^6$ & 81 \\ \hline
        6.1.2 & ([1,4,-2,4],[4,3,2],[4,1,3]) & $2 - 9t + 18t^2 - 23t^3 + 18t^4 - 9t^5 + 2t^6$ & 81 \\ \hline
        6.1.2 & ([1,4,-2,4],[4,3,2],[4,-1,3]) & $1 - 7t + 19t^2 - 27t^3 + 19t^4 - 7t^5 + t^6$ & 81 \\ \hline
        6.1.2 & ([1,4,-2,4],[4,-3,2],[4,1,3]) & $1 - 7t + 19t^2 - 27t^3 + 19t^4 - 7t^5 + t^6$ & 81 \\ \hline
        6.1.2 & ([1,4,-2,4],[4,-3,2],[4,-1,3]) & $1 - 7t + 19t^2 - 27t^3 + 19t^4 - 7t^5 + t^6$ & 81 \\ \hline
        6.2.1 & ([1,3,1,4],[4,1,2],[4,2,3]) & $1 - t - 7t^2 + 15t^3 - 7t^4 - t^5 + t^6$ & 25 \\ \hline
        6.2.1 & ([1,3,1,4],[4,1,2],[4,-2,3]) & $1 - 3t + 5t^2 - 7t^3 + 5t^4 - 3t^5 + t^6$ & 25 \\ \hline
        6.2.1 & ([1,3,-1,4],[4,1,2],[4,2,3]) & $2 - 6t + 9t^2 - 6t^3 + 2t^4$ & 25 \\ \hline
        6.2.1 & ([1,3,-1,4],[4,1,2],[4,-2,3]) & $2 - 6t + 9t^2 - 6t^3 + 2t^4$ & 25 \\ \hline
        6.2.1 & ([1,-3,1,4],[4,1,2],[4,2,3]) & $1 - 3t + 9t^3 - 15t^4 + 9t^5 - 3t^7 + t^8$ & 25 \\ \hline
        6.2.1 & ([1,-3,1,4],[4,1,2],[4,-2,3]) & $2 - 6t + 9t^2 - 6t^3 + 2t^4$ & 25 \\ \hline
        6.2.1 & ([1,-3,-1,4],[4,1,2],[4,2,3]) & $1 - 6t + 11t^2 - 6t^3 + t^4$ & 25 \\ \hline
        6.2.1 & ([1,-3,-1,4],[4,1,2],[4,-2,3]) & $1 - 3t + 5t^2 - 7t^3 + 5t^4 - 3t^5 + t^6$ & 25 \\ \hline
        
        6.2.2 & ([1,2,2,4],[4,3,2],[4,1,3]) & $2 - 4t - 2t^2 + 9t^3 - 2t^4 - 4t^5 + 2t^6$ & 1 \\ \hline
        6.2.2 & ([1,2,2,4],[4,3,2],[4,-1,3]) & $1 - 3t + 2t^2 + 3t^3 - 7t^4 + 3t^5 + 2t^6 - 3t^7 + t^8$ & 1 \\ \hline
        6.2.2 & ([1,2,2,4],[4,-3,2],[4,1,3]) & $1 - 3t + 2t^2 + 3t^3 - 7t^4 + 3t^5 + 2t^6 - 3t^7 + t^8$ & 1 \\ \hline
        6.2.2 & ([1,2,2,4],[4,-3,2],[4,-1,3]) & $1 - 4t + 4t^2 + 4t^3 - 11t^4 + 4t^5 + 4t^6 - 4t^7 + t^8$ & 1 \\ \hline
        6.2.2 & ([1,3,2,4],[4,1,2],[4,2,3]) & $2 - 5t + 2t^2$ & 9 \\ \hline
        6.2.2 & ([1,3,2,4],[4,-1,2],[4,2,3]) & $1 - 2t + 3t^2 - 2t^3 + t^4$ & 9 \\ \hline
        6.2.2 & ([1,3,-2,4],[4,1,2],[4,2,3]) & $1 - t - 3t^2 + 7t^3 - 3t^4 - t^5 + t^6$ & 9 \\ \hline
        6.2.2 & ([1,3,-2,4],[4,-1,2],[4,2,3]) & $2 - 5t + 2t^2$ & 9 \\ \hline
        6.2.2 & ([1,-3,2,4],[4,1,2],[4,2,3]) & $2 - 5t + 2t^2$ & 9 \\ \hline
        6.2.2 & ([1,-3,2,4],[4,-1,2],[4,2,3]) & $1 - 2t + 3t^2 - 2t^3 + t^4$ & 9 \\ \hline
        6.2.2 & ([1,-3,-2,4],[4,1,2],[4,2,3]) & $1 - 4t + 3t^2 + 6t^3 - 13t^4 + 6t^5 + 3t^6 - 4t^7 + t^8$ & 9 \\ \hline
        6.2.2 & ([1,-3,-2,4],[4,-1,2],[4,2,3]) & $1 - 3t + 2t^2 + 5t^3 - 11t^4 + 5t^5 + 2t^6 - 3t^7 + t^8$ & 9 \\ \hline
        7.2 & ([1,4,2],[2,1,5],[5,2,3],[5,3,4]) & $1 - 6t + 18t^2 - 36t^3 + 47t^4 - 36t^5 + 18t^6 - 6t^7 + t^8$ & 169 \\ \hline
        7.2 & ([1,4,2],[2,1,5],[5,2,3],[5,-3,4]) & $1 - 7t + 20t^2 - 35t^3 + 43t^4 - 35t^5 + 20t^6 - 7t^7 + t^8$ & 169 \\ \hline
        7.2 & ([1,4,2],[2,1,5],[5,-2,3],[5,3,4]) & $1 - 7t + 20t^2 - 35t^3 + 43t^4 - 35t^5 + 20t^6 - 7t^7 + t^8$ & 169 \\ \hline
        7.2 & ([1,4,2],[2,1,5],[5,-2,3],[5,-3,4]) & $2 - 14t + 40t^2 - 57t^3 + 40t^4 - 14t^5 + 2t^6$ & 169 \\ \hline
        7.2 & ([1,4,2],[2,-1,5],[5,2,3],[5,3,4]) & $3 - 17t + 39t^2 - 51t^3 + 39t^4 - 17t^5 + 3t^6$ & 169 \\ \hline
        7.2 & ([1,4,2],[2,-1,5],[5,2,3],[5,-3,4]) & $1 - 6t + 19t^2 - 36t^3 + 45t^4 - 36t^5 + 19t^6 - 6t^7 + t^8$ & 169 \\ \hline
        7.2 & ([1,4,2],[2,-1,5],[5,-2,3],[5,3,4]) & $1 - 6t + 19t^2 - 36t^3 + 45t^4 - 36t^5 + 19t^6 - 6t^7 + t^8$ & 169 \\ \hline
        7.2 & ([1,4,2],[2,-1,5],[5,-2,3],[5,-3,4]) & $3 - 17t + 39t^2 - 51t^3 + 39t^4 - 17t^5 + 3t^6$ & 169 \\ \hline

        8.2 & ([1,2,5],[2,3,5],[5,4,3],[5,1,4]) & $4 - 22t + 52t^2 - 69t^3 + 52t^4 - 22t^5 + 4t^6$ & 225 \\ \hline
        8.2 & ([1,2,5],[2,3,5],[5,4,3],[5,-1,4]) & $1 - 7t + 24t^2 - 49t^3 + 63t^4 - 49t^5 + 24t^6 - 7t^7 + t^8$ & 225 \\ \hline
        8.2 & ([1,2,5],[2,3,5],[5,-4,3],[5,-1,4]) & $1 - 8t + 26t^2 - 48t^3 + 59t^4 - 48t^5 + 26t^6 - 8t^7 + t^8$ & 225 \\ \hline
        8.2 & ([1,2,5],[2,-3,5],[5,4,3],[5,-1,4]) & $1 - 8t + 26t^2 - 48t^3 + 59t^4 - 48t^5 + 26t^6 - 8t^7 + t^8$ & 225 \\ \hline
	\end{tabular}}
    \caption{Ribbon codes and Alexander polynomials for Structures 6 and 7}
    \label{table:rc6-8}
\end{table}

%

\begin{figure}[h!]
\centering
\begin{subfigure}{.33\textwidth}
  \centering
  \includegraphics[width=.9\linewidth]{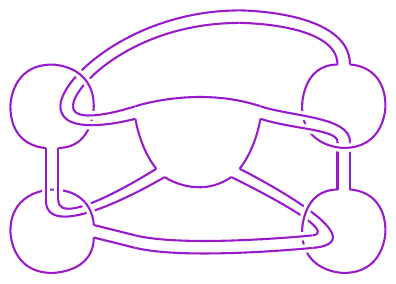}
  \caption{$12a_{427}$}
\end{subfigure}%
\begin{subfigure}{.33\textwidth}
  \centering
  \includegraphics[width=.9\linewidth]{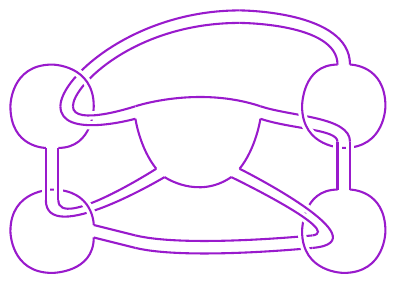}
  \caption{$12a_{435}$}
\end{subfigure}%
\begin{subfigure}{.33\textwidth}
  \centering
  \includegraphics[width=.9\linewidth]{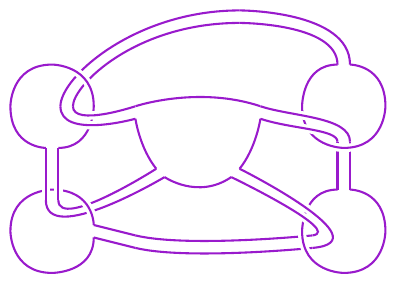}
  \caption{$12a_{464}$}
\end{subfigure} \\
\begin{subfigure}{.33\textwidth}
  \centering
  \includegraphics[width=.9\linewidth]{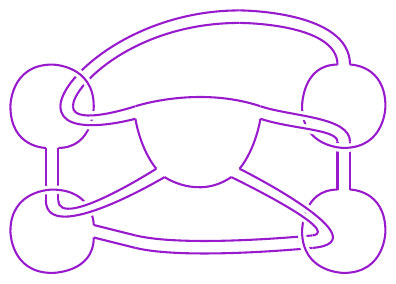}
  \caption{$12a_{975}$}
\end{subfigure}%
\begin{subfigure}{.33\textwidth}
  \centering
  \includegraphics[width=.9\linewidth]{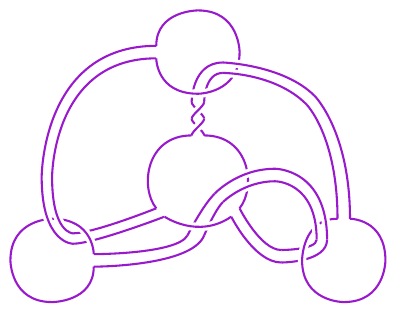}
  \caption{$12n_4$}
\end{subfigure}%
\begin{subfigure}{.33\textwidth}
  \centering
  \includegraphics[width=.9\linewidth]{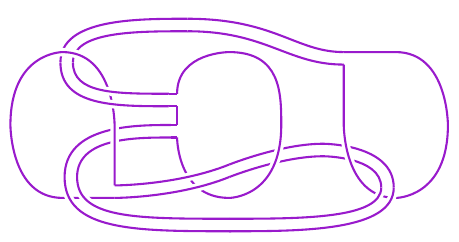}
  \caption{$12n_{23}$}
\end{subfigure} \\
\begin{subfigure}{.33\textwidth}
  \centering
  \includegraphics[width=.9\linewidth]{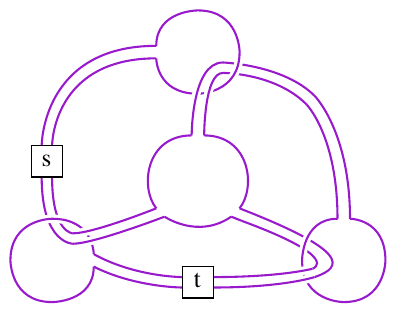}
  \caption{\phantom{yo}}
\end{subfigure}%
\begin{subfigure}{.33\textwidth}
  \centering
  \includegraphics[width=.9\linewidth]{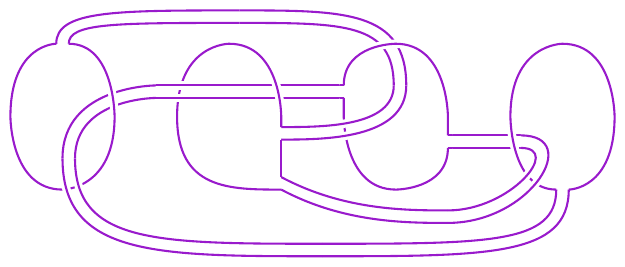}
  \caption{$12n_{43}$}
\end{subfigure}%
\begin{subfigure}{.33\textwidth}
  \centering
  \includegraphics[width=.9\linewidth]{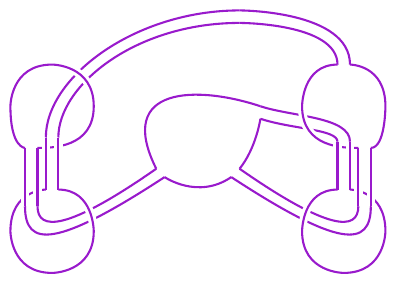}
  \caption{$12n_{49}$}
\end{subfigure} \\
\begin{subfigure}{.33\textwidth}
  \centering
  \includegraphics[width=.9\linewidth]{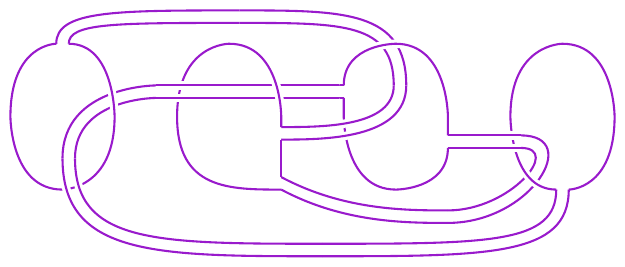}
  \caption{$12n_{106}$}
\end{subfigure}%
\begin{subfigure}{.33\textwidth}
  \centering
  \includegraphics[width=.9\linewidth]{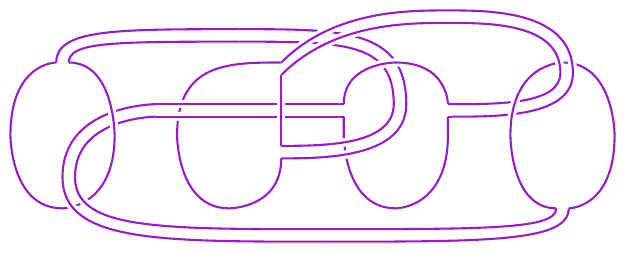}
  \caption{$12n_{170}$}
\end{subfigure}%
\begin{subfigure}{.33\textwidth}
  \centering
  \includegraphics[width=.9\linewidth]{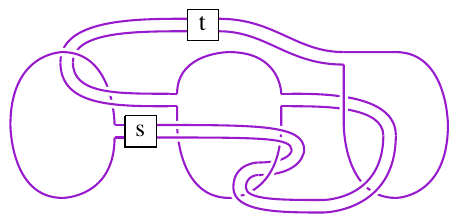}
  \caption{\phantom{yo}}
\end{subfigure}
\caption{(G) $12n_{24}$ ($s=-1,t=2$) and $12n_{312}$ ($s=0,t=2$), (L) $12n_{257}$ $(s=-1,t=1)$, $12n_{279}$ $(s=1,t=0)$, $12n_{394}$ $(s=-1,t=0)$, and $12n_{870}$~$(s=0,t=1)$}
\label{fig:ribbonA}
\end{figure}

\begin{figure}[h!]
\centering
\begin{subfigure}{.33\textwidth}
  \centering
  \includegraphics[width=.9\linewidth]{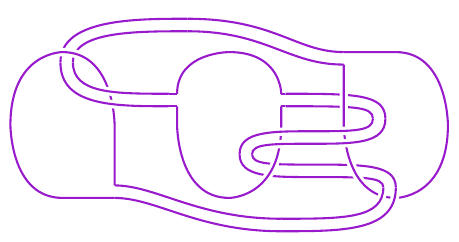}
  \caption{$12n_{288}$}
\end{subfigure}%
\begin{subfigure}{.33\textwidth}
  \centering
  \includegraphics[width=.9\linewidth]{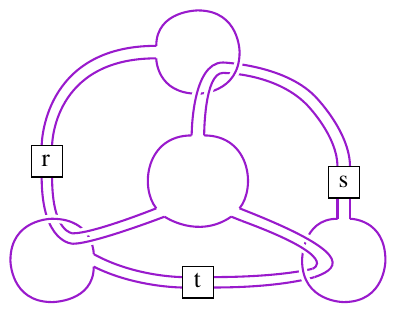}
  \caption{\phantom{yo}}
\end{subfigure}%
\begin{subfigure}{.33\textwidth}
  \centering
  \includegraphics[width=.9\linewidth]{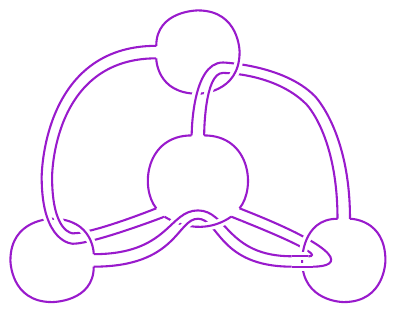}
  \caption{$12n_{380}$}
\end{subfigure} \\
\begin{subfigure}{.33\textwidth}
  \centering
  \includegraphics[width=.9\linewidth]{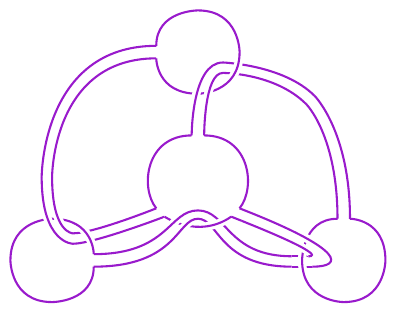}
  \caption{$12n_{399}$}
\end{subfigure}
\begin{subfigure}{.33\textwidth}
  \centering
  \includegraphics[width=.9\linewidth]{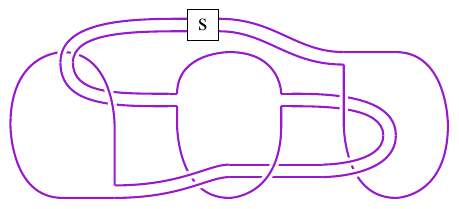}
  \caption{\phantom{yo}}
\end{subfigure}%
\begin{subfigure}{.33\textwidth}
  \centering
  \includegraphics[width=.9\linewidth]{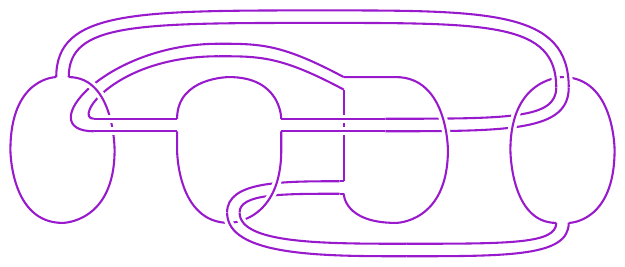}
  \caption{$12n_{420}$}
\end{subfigure} \\
\begin{subfigure}{.33\textwidth}
  \centering
  \includegraphics[width=.9\linewidth]{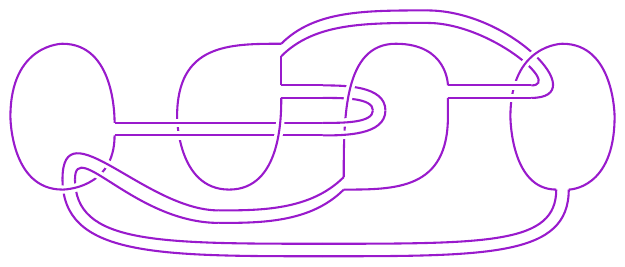}
  \caption{$12n_{504}$}
\end{subfigure}%
\begin{subfigure}{.33\textwidth}
  \centering
  \includegraphics[width=.9\linewidth]{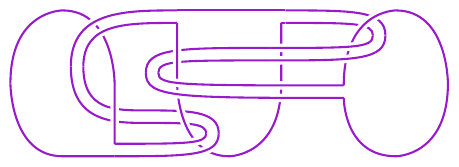}
  \caption{$12n_{553}$}
\end{subfigure}%
\begin{subfigure}{.33\textwidth}
  \centering
  \includegraphics[width=.9\linewidth]{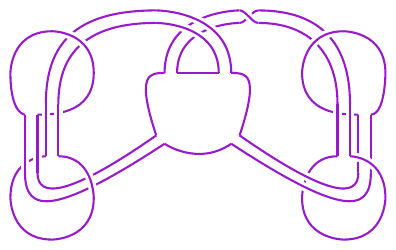}
  \caption{$12n_{556}$}
\end{subfigure} \\
\begin{subfigure}{.33\textwidth}
  \centering
  \includegraphics[width=.9\linewidth]{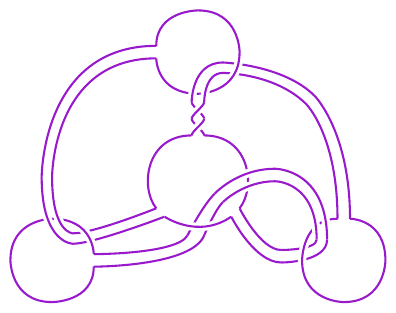}
  \caption{$12n_{636}$}
\end{subfigure}%
\begin{subfigure}{.33\textwidth}
  \centering
  \includegraphics[width=.9\linewidth]{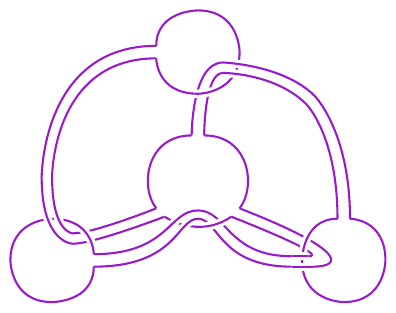}
  \caption{$12n_{657}$}
\end{subfigure}%
\begin{subfigure}{.33\textwidth}
  \centering
  \includegraphics[width=.9\linewidth]{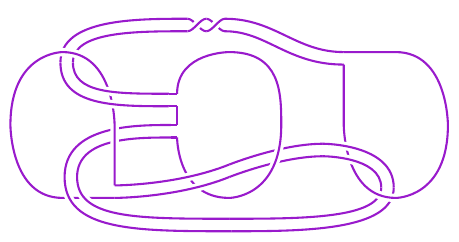}
  \caption{$12n_{676}$}
\end{subfigure} \\
\begin{subfigure}{.33\textwidth}
  \centering
  \includegraphics[width=.9\linewidth]{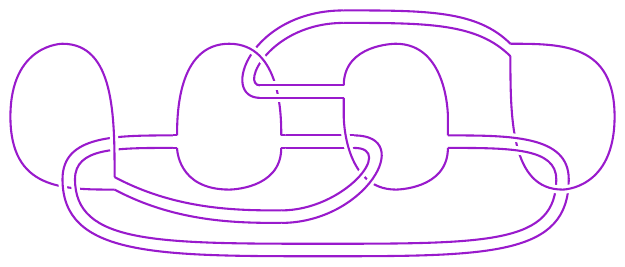}
  \caption{$12n_{706}$}
\end{subfigure}%
\begin{subfigure}{.33\textwidth}
  \centering
  \includegraphics[width=.9\linewidth]{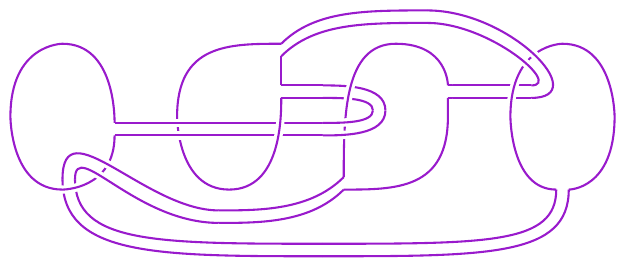}
  \caption{$12n_{802}$}
\end{subfigure}%
\begin{subfigure}{.33\textwidth}
  \centering
  \includegraphics[width=.9\linewidth]{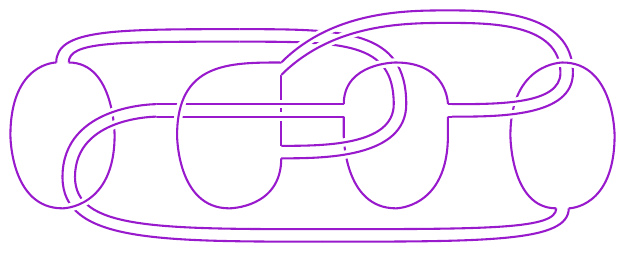}
  \caption{$12n_{876}$}
\end{subfigure}
\caption{(B) $12n_{360}$ ($r=1,s=1,t=-1$), $12n_{393}$ ($r=2,s=0,t=0$), and $12n_{397}$ ($r=2,s=-1,t=0$), (E) $12n_{414}$ ($s=-3)$ and $12n_{768}$ $(s=4)$}
\label{fig:ribbonB}
\end{figure}

\newpage

\bibliographystyle{amsalpha}
\bibliography{ribbon_numbers.bib}

\end{document}